\crefname{section}{section}{sections}
\crefname{subsection}{subsection}{subsections}
\Crefname{section}{Section}{Sections}
\Crefname{subsection}{Subsection}{Subsections}
\Crefname{figure}{Figure}{Figures}
\newcommand{\PI}{$\mathrm{P_I}$}
\newcommand{\PII}{$\mathrm{P_{II}}$}
\newcommand{\PVI}{$\mathrm{P_{VI}}$}
\newcommand{\Ai}{\mathrm {Ai}}
\newcommand{\Bi}{\mathrm {Bi}}
\renewcommand{\Re} {\mathop{\rm Re}\nolimits}
\renewcommand{\Im} {\mathop{\rm Im}\nolimits}
\newtheorem{theorem}{Theorem}[section]
\newtheorem{lemma}[theorem]{Lemma}
\newtheorem{corollary}[theorem]{Corollary}
\newtheorem{remark}[theorem]{Remark}
\numberwithin{equation}{section}
\title{Connection problem of the first Painlev\'{e} transcendent between poles and negative infinity
\thanks{
{This work was supported by
the National Natural Science Foundation of China under Grant nos. 11801480 and 12071394,
the Natural Science Foundation of Hunan Province under Grant no. 2020JJ5152,
the General Project of Hunan Provincial Department of Education under Grant no. 19C0771,
and the Doctoral Startup Fund of Hunan University of Science and Technology under Grant no. E51871.}}}
\author{Wen-Gao Long\thanks{School of Mathematics and Computation Science, Hunan University of Science and Technology, Xiangtan,  411201, China
  (\url{longwg@hnust.edu.cn}).}
\and Yu-Tian Li\thanks{School of Science and Engineering, Chinese University of Hong Kong, Shenzhen, Guangdong, 518172, China
  (\url{liyutian@cuhk.edu.cn}).}
\and Qing-hai Wang\thanks{Department of Physics, National University of Singapore, 117551, Singapore
  (\url{qhwang@nus.edu.sg}).}}
\date{}
\begin{document}

\maketitle

\begin{abstract}
We consider a connection problem of the first Painlev\'{e} equation (\PI),
trying to connect the local behavior (Laurent series) near poles and the asymptotic behavior as the variable $t$ tends to the negative infinity for real \PI~functions.
We get a classification of the real \PI~functions in terms of $(p,H)$ so that they behave differently at the negative infinity,
where $p$ is the location of a pole and $H$ is the free parameter in the Laurent series.
Some limiting-form connection formulas of \PI~functions are obtained for large $H$.
Specifically, for the real tritronqu\'{e}e solution, the large-$n$ asymptotic formulas of $p_n$ and $H_n$ are obtained,
where $p_n$ is the $n$-th pole on the real line in the ascending order
and $H_n$ is the associated free parameter.
Our approach is based on the complex WKB method (also known as the method of uniform asymptotics) introduced
by Bassom, Clarkson, Law and McLeod in their study on the connection problem of
the second Painlev\'{e} transcendent [Arch.~Rational Mech.~Anal., 1998, pp.~241-271].
Several numerical simulations are carried out to verify our main results.
Meanwhile, we obtain the phase diagram of \PI~solutions in the $(p,H)$ plane,
which somewhat resembles the Brillouin zones in solid-state physics.
The asymptotic and numerical results obtained in this paper partially answer Clarkson's open question on the connection problem of the first Painlev\'{e} transcendent.
\end{abstract}

\textbf{Keywords:}
  Connection formula, first Painlev\'{e} transcendent, tritronqu\'{e}e solution, uniform asymptotics, parabolic cylinder function, Airy function

\textbf{MSC classification (2020):}
  34M40, 34M55, 34E05, 33C10

\section{Introduction}
\label{sec:introduction}

At the turn of the twentieth century, Painlev\'{e} and his colleagues studied nonlinear ordinary differential equations on the complex plane,
they were { particularly} interested in those equations whose solutions { have} no ``movable'' branch points --
{ currently} known as the \emph{Painlev\'{e} property}.
Here, ``movable'' means that the location of singularities depends on the initial conditions.
Painlev\'{e} and his colleagues { found out that} there are only six irreducible equations,
meaning that { their} solutions, in general, can not be represented by elementary or classical special functions.
This group of six nonlinear ODEs are known as Painlev\'{e} equations, usually denoted by \PI--\PVI.
A nice survey of the derivation and historical background of the Painlev\'{e} equations can be found in Ince~\cite{Ince}
or Iwasaki--Kimura--Shimomura--Yoshida~\cite{IKSY}.

Though Painlev\'e equations were introduced for purely { mathematical} interest,
they turn out to play essential roles in many branches of mathematics and mathematical physics.
People { found out that} many nonlinear problems can be solved in terms of solutions of Painlev\'e equations, for example, {  integrable systems}, nonlinear waves, random matrix theory, number theory, Ising model, quantum gravity, etc.~\cite{IKSY}.
The solutions of the Painlev\'{e} equations are also called \emph{Painlev\'{e} transcendents},
and they are considered to be ``the nonlinear special functions''
that generalize the arsenal of classical special functions (Airy, Bessel, parabolic cylinder, hypergeometric functions, etc.),
see~\cite{Clarkson2003,Clarkson2006,Clarkson2019}.

The nonlinearity of Painlev\'e equations makes them fundamental tools in a broad array of nonlinear sciences,
and it also makes their properties and analysis complicated, on the other hand.
Since there is no nice closed-form expressions for solutions of \PI--\PVI,
the asymptotic approximation becomes a valuable tool to understand their properties.
These asymptotic approximations are usually established when the variable tends to some specific values and,
therefore, are local and usually can be done with considerable effort.
How to get a global picture of the solutions, that is, how to connect the asymptotic behaviors at different local regions,
is a much more intricate question, especially for the first Painlev\'{e} equation.

\subsection{The local asymptotic behaviors of \texorpdfstring{\PI}{PI}}
In the current work, we concentrate on the first Painlev\'e equation (\PI)
\begin{equation}\label{PI equation}
\frac{d^{2}y}{dt^{2}}=6y^2+t,
\end{equation}
and we consider the problem of connecting local behaviors of \PI~solutions.
Let us first describe the local asymptotic formulas for \PI.
An application of the idea of dominant balance to \cref{PI equation} yields that there are two kinds of solutions, behaving respectively
\[y(t)\sim\sqrt{-\frac{t}{6}}\quad \text{and}\quad y(t)\sim-\sqrt{\frac{-t}{6}}\]
as $t\rightarrow-\infty$; see for example Bender and Orszag~\cite{Bender-Orszag}.
Holmes and  Spence~\cite{HS-1984} studied the boundary value problem for \PI~and
they showed that there are exactly three types of real solutions of \PI~equation on the negative real axis.
Several years later, Kapaev~\cite{AAKapaev-1988} obtained the asymptotic approximations of these solutions,
which involve one or two parameters in each type, and
these parameters were given in terms of the so-called \emph{Stokes multipliers}, $\{s_k;k=0,1,2,3,4\}$.
We first list these asymptotic formulas and a full expression of the Stokes { multipliers} will be given in \cref{subsec:mon}.
Precisely speaking, Kapaev~\cite{AAKapaev-1988} showed \PI~has three types of solutions
with the following approximation formulas.

\begin{enumerate}
\item [(A)] a two-parameter family of solutions (termed \emph{oscillating solutions}), oscillating about the parabola $y=-\sqrt{-t/6}$ and satisfying
\begin{equation}\label{eq-behavior-type-A}
y=-\sqrt{-\frac{t}{6}}+d\,(-t)^{-\frac{1}{8}}\cos{\left[24^{\frac{1}{4}}\left(\frac{4}{5}(-t)^{\frac{5}{4}}-\frac{5}{8}d^2 \log(-t)+\varphi\right)\right]}+\mathcal{O}\left (t^{-\frac{5}{8}}\right )
\end{equation}
as $t\rightarrow-\infty$, where
{     \begin{equation}\label{eq-parameter-d-theta}
  \left\{\begin{aligned}
  &24^{\frac{1}{4}}d^{2}=-\frac{1}{\pi}\log{(1-|s_{2}|^2)},\\
  &24^{\frac{1}{4}}\varphi=\arg{s_{2}}-24^{\frac{1}{4}}d^{2}\left(\frac{19}{8}\log{2}+\frac{5}{8}\log{3}\right)+\frac{3\pi}{4}-\arg\Gamma\left(-i\frac{24^{\frac{1}{4}}}{2}d^{2}\right);
\end{aligned}\right.
\end{equation}}
\item [(B)] a one-parameter family of solutions (termed \emph{separatrix solutions}), satisfying
    \begin{equation}\label{eq-behavior-type-B}
      y(t)=\sqrt{\frac{-t}{6}}{ +}\frac{h}{4\sqrt{\pi}}24^{-\frac{1}{8}}(-t)^{-\frac{1}{8}}\exp\left\{-\frac{4}{5}24^{\frac{1}{4}}(-t)^{\frac{5}{4}}\right\}+\mathcal{O}\left(|t|^{-\frac{5}{2}}\right)
    \end{equation}
    as $t\rightarrow-\infty$, where
    \begin{equation}\label{eq-parameter-h}
     h=s_{1}-s_{4};
    \end{equation}
\item [(C)] a two-parameter family of solutions (termed \emph{singular solutions}), having infinitely many double poles on the negative real axis and satisfying
    \begin{equation}\label{eq-behavior-type-C}
      \frac{1}{y(t)+\sqrt{{-t}/{6}}}\sim { \frac{\sqrt{6}}{3}}(-t)^{-\frac{1}{2}}\sin^{2}\left\{\frac{2}{5}24^{1/4}(-t)^{\frac{5}{4}}+\frac{5}{8}\rho\log(-t)+\sigma\right\}
    \end{equation}
  as $t\rightarrow-\infty$, where
    \begin{equation}\label{eq-parameter-rho-sigma}
    \left\{\begin{aligned}
    \rho&=\frac{1}{2\pi}\log(|s_{2}|^{2}-1),\\
    \sigma&=\frac{19}{8}\rho\log{2}+\frac{5}{8}\rho\log{3}+\frac{1}{2}\arg\Gamma\left(\frac{1}{2}-i\rho\right)-\frac{\pi}{4}+\frac{1}{2}\arg{s_{2}}.
    \end{aligned}\right.
    \end{equation}
\end{enumerate}
The above three types solutions are classified by the Stokes multipliers $s_{k}$, $k=0,1,2,3,4$ as follows:
{ \begin{equation}\label{eq-condition-stokes-multipliers}
\begin{aligned}
\Im s_{0}=1-|s_{2}|^2>0 \quad \text{for Type (A)},\\
\Im s_{0}=1-|s_{2}|^2=0 \quad \text{for Type (B)},\\
\Im s_{0}=1-|s_{2}|^2<0 \quad \text{for Type (C)}.
\end{aligned}
\end{equation}
}

It is known that \PI~solutions are all meromorphic
-- every \PI~function has infinitely many poles on the complex plane,
and all the poles are of order two, i.e, double poles; { see \cite[pp.5-6]{GLS}}.
Suppose $p$ is a pole of the \PI~function $y(t)$,
then the Laurent series of $y(t)$ at $p$ takes the form
\begin{equation}\label{eq-Laurent-series}
y(t)=\frac{1}{(t-p)^2}-\frac{p}{10}(t-p)^2-\frac{1}{6}(t-p)^3+H(t-p)^4+\cdots,
\end{equation}
where the coefficient $H$ is a free parameter, and all later coefficients in the series depend only on $p$ and $H$.
Therefore, the pair $(p,H)$ fully characterizes the \PI~solutions $y(t)$.
In other words, $(p,H)$ depends only on the Cauchy data or the Stokes multipliers corresponding to $y(t)$.
Moreover, from \cite[Eqs. (34),~(36)]{Xia-Xu-Zhao}, one may note that
\begin{equation}\label{eq-relation-H-tau}
-28H=\lim\limits_{t\to p}2\left(\frac{d}{dt}\log{\tau(t)}-\frac{1}{t-p}\right),
\end{equation}
{ where $\tau(t)$ is the tau-function of \PI, which is defined by
$$\frac{d}{dt}\log{\tau(t)}=\frac{1}{2}\left(\frac{dy}{dt}\right)^2-2y^3-ty.$$
}
\subsection{Connection problems of \texorpdfstring{\PI}{PI}}

The asymptotic approximations of Painlev\'{e} transcendents are derived and valid { when} the variable tends to specific values.
For example, in the case of \PI, we have asymptotic approximations when $t$ approaches to one of these values:
\begin{enumerate}
  \item [(i)] when $t\to-\infty$, the solutions are described in \cref{eq-behavior-type-A}--\cref{eq-parameter-rho-sigma};
  \item [(ii)] when $t$ approaches to a pole $p$, the solutions are described in \cref{eq-Laurent-series};
  \item [(iii)] when $t$ is near the origin, the solutions are described by a Taylor series involving the initial data (Cauchy data).
\end{enumerate}
{ The} asymptotic formulas { appearing} in cases (i)-(iii) are valid \emph{locally} --
for a given solution of \PI, its behavior can be characterized in these local regions.
A natural and important question is how to connect these local behaviors
-- or put in the other way -- given a \PI~solution $y(t)$, the parameters in the approximations are fixed;
the question is how (if possible) to find the relations in explicit forms between the parameters used in different local regions.
If such a relation can be derived, it is usually called a \emph{connection formula},
and the question of building such formulas is known as the \emph{connection problem}.
We encounter with similar situations for linear differential equations on the complex plane,
and the connection problem can be solved for a large class of linear equations.
However, unlike the linear case,
Painlev\'e transcendents are more complicated and their connection problems have been studied and solved
in some interesting, albeit limited, cases.
Especially for the \PI~transcendents, the connection problems are widely open;
P.~A.~Clarkson announced finding the connection formulas for \PI~as open problems on several occasions \cite{Clarkson2003,Clarkson2006,Clarkson2019}.
Among these problems, we are particular interested in the following two connection problems.
\begin{enumerate}
\item [(1)] How to connect the asymptotic behaviors in (i) and (iii)?
That is to find the relation between parameters in \cref{eq-behavior-type-A}, \cref{eq-behavior-type-B} and \cref{eq-behavior-type-C} and the initial data $y(0)$ and $y'(0)$;

\item [(2)] How to connect the asymptotic behaviors in (i) and (ii)?
That is to find the relation between parameters used in \cref{eq-behavior-type-A}, \cref{eq-behavior-type-B} and \cref{eq-behavior-type-C}
and the ones in \cref{eq-Laurent-series}.
\end{enumerate}

The first problem is the initial value problem (Cauchy problem),
which was first considered by Holmes and Spence~\cite{HS-1984}.
They showed that there exist two constants $\kappa_{1}<0<\kappa_{2}$ such that all solutions of \cref{PI equation}
with $y(0)=0$ and $\kappa_{1}<y'(0)<\kappa_{2}$ belong to Type (A);
while $y'(0)>\kappa_{2}$ or $y'(0)<\kappa_{1}$, the solutions will blow up on the negative real axis.
Later on, several numerical investigations \cite{Bender-Komijani-2015, Fornberg-Weideman, Qin-Lu-2008}
revealed some interesting phenomena on the Cauchy problem of \PI.
In particular, Bender and Komijani~\cite{Bender-Komijani-2015} observed that the three types of \PI~solutions
appear alternatively as one initial data fixed and the other varying continuously.
Recently, Long et al.~\cite{LongLLZ} gave a rigorous proof to this phenomenon,
obtained an asymptotic classification of the \PI~solutions with respect to the initial data,
and built some limiting-form connection formulas.

The second problem is also natural since every real solution of \PI~has infinitely many poles on the real axis.
To understand the real solutions,
we need to know how the behaviors near poles connect to the behaviors at the negative infinity.
A complete solution to this connection problem is again tricky.
Nevertheless, the classification of the three types of behaviors of \PI~associated with $p$ and $H$
seems possible and deserves an investigation.

Inspired by the ideas in \cite{Sibuya-1967} and \cite{LongLLZ},
we find that the essential work is to approximate the Stokes multipliers for large $p$ or $H$,
which is the primary work in the present paper.
Precisely, we derive the asymptotic behavior of the Stokes multipliers $s_{k}$'s with large $H$,
while the location of poles $p$ can be arbitrary -- small, bounded fixed or large,
namely, there are three cases to be analyzed:
\[
\text{(i) } p\to 0;\qquad\quad
\text{(ii) } p \text{ is fixed};\qquad\quad
\text{(iii) } p\to\infty.
\]
For each of these cases,
we will classify the \PI~solutions in terms of $p$ and $H$,
and build the corresponding limiting-form connection formulas.

Every \PI~transcendent has infinitely many of poles on the complex plane, and the poles can be anywhere in general.
Among all solutions of \PI, there are two special families, called tronqu\'{e}e and tritronqu\'{e}e solutions.
If the complex plane is divided into five equal sectors, then tronqu\'{e}e solutions are pole-free on two
and tritronqu\'{e}e solutions are pole-free on four sectors, respectively.
There is basically only one tritronqu\'{e}e solution and the other four can be obtained by a simple rotation of the variable.
The distribution of poles of these solutions has been { receiving} a lot of attention in research,
see~\cite{Bertola-Tovbis-2013,Costin-Costin-Huang-2015,Costin-Costin-Huang-2016,Costin-Huang-Tanveer-2016,Dubrovin2009,Joshi-Kitaev2001,Masoero-2010}.
A by-product of our current study is that we find the precise asymptotic approximation of $(p,H)$ in the Laurent series of the $n$-th pole of the real tritronqu\'{e}e solution, see \cref{cor-tritronquee} below.

Just like the initial conditions \cite[Figure 4.5]{Fornberg-Weideman},
one may find regions in the parameter space $(p,H)$ with each region gives rise to a particular type of \PI~solutions.
We call such a classification map as ``a phase diagram,'' a term borrowed from physics.
However, unlike the initial conditions, we do not have a one-to-one correspondence
between a point in the $(p,H)$ space and a \PI~solution.
This is because all real \PI~solutions have infinite double poles on the real axis,
regardless of the type of the solution.
That is, there are infinite points in the parameter space $(p,H)$ that correspond to a single \PI~solution.
It is natural to divide the entire parameter space into infinite regions,
with the points in each region have a one-to-one correspondence with \PI~solutions.
The points in different such regions are connected by \PI~solutions. We will see that such a partition has certain arbitrariness.
In a sense, these regions resemble the Brillouin zones in solid state physics.

\subsection{Monodromy theory for \texorpdfstring{\PI~and}{PI and} RTHE}
\label{subsec:mon}

We recall some important concepts in the monodromy theory for the first Painlev\'{e} transcendents.
Recall that one of the Lax pairs for the \PI~equation is (see \cite{Kapaev-Kitaev-1993})
\begin{equation}\label{lax pair-I}
\left\{\begin{aligned}
\frac{\partial\Psi}{\partial\lambda}
&=\left\{(4\lambda^4+t+2y^2)\sigma_{3}-i(4y\lambda^2+t+2y^2)\sigma_{2}-(2y_{t}\lambda+\frac{1}{2\lambda})\sigma_{1}\right\}\Psi,
\\
\frac{\partial\Psi}{\partial t}
&=\left\{(\lambda+\frac{y}{\lambda})\sigma_{3}-\frac{iy}{\lambda}\sigma_{2}\right\}\Psi,
\end{aligned}\right.
\end{equation}
where
$$\sigma_{1}=\begin{bmatrix}0&1\\1&0\end{bmatrix},\quad \sigma_{2}
=\begin{bmatrix}0&-i\\i&0\end{bmatrix},\quad \sigma_{3}
=\begin{bmatrix}1&0\\0&-1\end{bmatrix}$$
are the Pauli matrices and $y_{t}=\frac{dy}{dt}$.
The compatibility of the above system means $\frac{\partial^2\Psi}{\partial\lambda\partial t}=\frac{\partial^2\Psi}{\partial t\partial\lambda}$,
which implies that $y=y(t)$ satisfies the first Painlev\'{e} equation \cref{PI equation}.
Under the transformation
\begin{equation}\label{eq-transform-canonical solution}
\Phi(\lambda)
=\lambda^{\frac{1}{4}\sigma_{3}}\frac{\sigma_{3}+\sigma_{1}}{\sqrt{2}}\Psi(\sqrt{\lambda}),
\end{equation}
the first equation of \cref{lax pair-I} becomes
\begin{equation}\label{eq-fold-Lax-pair}
\frac{\partial\Phi}{\partial\lambda}
=\begin{bmatrix}y_{t}&{ 2\lambda^{2}+2y\lambda+t+2y^2}\\2(\lambda-y)&-y_{t}\end{bmatrix}\Phi.
\end{equation}
The only singularity of the above equation is the irregular singular point at $\lambda=\infty$.
Following \cite{Kapaev-Kitaev-1993} (see also \cite{AAKapaev-2004}),
there exist canonical solutions $\Phi_{k}(\lambda)$, $k\in\mathbb{Z}$, of \cref{eq-fold-Lax-pair} with the asymptotic expansion
\begin{equation}\label{eq-canonical-solutions}
\Phi_{k}(\lambda,t)
=\lambda^{\frac{1}{4}\sigma_{3}}\frac{\sigma_{3}+\sigma_{1}}{\sqrt{2}}
\left({ I+\frac{\mathcal{H}}{\sqrt{\lambda}}+\mathcal{O}\left(\frac{1}{\lambda}\right)}\right)
e^{(\frac{4}{5}\lambda^{\frac{5}{2}}+t\lambda^{\frac{1}{2}})\sigma_{3}}
\end{equation}
as $\lambda\rightarrow\infty$ with $\lambda\in\Omega_{k}$, uniformly for all $t$ bounded away from $p$, where $\mathcal{H}=-(\frac{1}{2}y_{t}^2-2y^3-ty)\sigma_{3}$, and the canonical sectors are
$$\Omega_{k}=\left\{\lambda\in\mathbb{C}:~\arg \lambda\in \left(-\frac{3\pi}{5}+\frac{2k\pi}{5},\frac{\pi}{5}+\frac{2k\pi}{5}\right)\right\}, \qquad k\in\mathbb{Z}.$$
These canonical solutions are related by
\begin{equation}\label{eq-Stokes-matrices}
\Phi_{k+1}=\Phi_{k}S_{k},\quad S_{2k-1}=\begin{bmatrix}1&s_{2k-1}\\0&1\end{bmatrix},\quad S_{2k}=\begin{bmatrix}1&0\\s_{2k}&1\end{bmatrix},
\end{equation}
where $s_{k}$ are called \emph{Stokes multipliers},
and independent of $\lambda$ and $t$ according to the isomonodromy condition.
The Stokes multipliers are subject to the constraints
\begin{equation}\label{eq-constraints-stokes-multipliers}
s_{k+5}=s_{k}
\quad\text{and}\quad
s_{k}=i(1+s_{k+2}s_{k+3}),
\qquad
k\in\mathbb{Z}.
\end{equation}
Moreover, regarding $s_{k}$ as functions of $(t,y(t),y'(t))$,
they also satisfy
\begin{equation}\label{eq-sk-s-k-relation}
s_{k}\left (t,y(t),y'(t)\right)
=-\overline{s_{-k}\left (\bar{t},\overline{y(t)},\overline{y'(t)}\right )},
\qquad
k\in\mathbb{Z},
\end{equation}
where $\bar z$ stands for the complex conjugate of $z$, see~\cite[(13)]{AAKapaev-1988}.
It is readily seen from \cref{eq-constraints-stokes-multipliers} that,
in general, two of the Stokes multipliers determine all others.
According to \cite{Bertola-Tovbis-2013}, we can proceed further as follows.
Define
\begin{equation}\label{eq-def-hat{Phi}}
\hat{\Phi}(\lambda,t)=G(\lambda,t)\Phi(\lambda,t)
\end{equation}
with
\begin{equation}\label{def-G(lambda,t)}
G(\lambda,t)
=\begin{bmatrix}0&1\\ 1&{ -\frac{1}{2}\left(-y_{t}+\frac{1}{2(\lambda-y)}\right)}\end{bmatrix}
(\lambda-y)^{\frac{\sigma_{3}}{2}}.
\end{equation}
Then $\hat{\Phi}(\lambda,t)$ satisfies
\begin{equation}\label{eq-system-hat-Phi}
\frac{d}{d\lambda}\hat{\Phi}(\lambda,t)
=\begin{bmatrix}0&2\\V(\lambda,t)&0\end{bmatrix}\hat{\Phi}(\lambda,t),
\end{equation}
where
\[
2V(\lambda,t)=y_{t}^{2}+4\lambda^{3}+2\lambda t-2y t-4y^{3}-\frac{y_{t}}{\lambda-y}+\frac{3}{4}\frac{1}{(\lambda-y)^2}.
\]
Moreover, using the approximation of $\Phi(\lambda,t)$ in \cref{eq-canonical-solutions}, one can verify that
\begin{equation}\label{eq-canonical-solutions-hat-Phi}
\hat{\Phi}_{k}(\lambda,t)
=\frac{\lambda^{-\frac{3}{4}\sigma_{3}}}{\sqrt{2}}
\begin{bmatrix}1&-1\\1&1\end{bmatrix}
\left(I+\mathcal{O}(\lambda^{-\frac{1}{2}})\right)
e^{(\frac{4}{5}\lambda^{\frac{5}{2}}+t\lambda^{\frac{1}{2}})\sigma_{3}}
\end{equation}
as $\lambda\rightarrow\infty$ with $\lambda\in\Omega_{k}$.
The asymptotic expansions of $\Phi_{k}(\lambda,t)$ and $\hat{\Phi}_{k}(\lambda,t)$ in \cref{eq-canonical-solutions} and \cref{eq-canonical-solutions-hat-Phi} are valid
only when $t$ is bounded away from $p$.
When $t\to p$, according to \cite{Bertola-Tovbis-2013},
the above system \cref{eq-system-hat-Phi} turns to
\begin{equation}\label{eq-system-hat-Phi2}
\frac{d}{d\lambda}\hat{\Phi}(\lambda,p)
=\begin{bmatrix}0&2\\2\lambda^3+p\lambda-14H&0\end{bmatrix}\hat{\Phi}(\lambda,p),
\end{equation}
and the corresponding asymptotic expansions of $\hat{\Phi}_{k}(\lambda,p)$ in \cref{eq-canonical-solutions-hat-Phi} should be replaced by
\begin{equation}\label{eq-canonical-solutions-near-pole}
\hat{\Phi}_{k}(\lambda,p)
=\frac{\lambda^{-\frac{3}{4}\sigma_{3}}}{\sqrt{2}}
\begin{bmatrix}-i&-i\\-i&i\end{bmatrix}
\left(I+\mathcal{O}\left(\frac{1}{\sqrt{\lambda}}\right)\right)
e^{(\frac{4}{5}\lambda^{\frac{5}{2}}+p\lambda^{\frac{1}{2}})\sigma_{3}}
\end{equation}
as $\lambda\rightarrow\infty$ with $\lambda\in\Omega_{k}$; { see \cite[Corollary A.8]{Bertola-Tovbis-2013}}. Finally, if denoting
\[
\hat{\Phi}(\lambda,p)=\begin{bmatrix}\phi_{1}\\\phi_{2}\end{bmatrix},
\]
and letting $Y(\lambda;p)=\phi_{1}$, then we arrive at the { following reduced triconfluent Heun equation (RTHE); see  \cite[Eq. (6)]{Xia-Xu-Zhao} or \cite[p.108]{SL}}
\begin{equation}\label{Schrodinger-equation-triconfluent-Heun}
\frac{d^{2}Y}{d\lambda^{2}}=\left[4\lambda^{3}+2p\lambda-28H\right]Y.
\end{equation}

\begin{remark}
It is worth mentioning that $\Phi_{k}(\lambda,t)$ and $\hat{\Phi}_{k}(\lambda,p)$ share the same Stokes matrices,
which follows from the isomonodromic property of equation \cref{eq-fold-Lax-pair}
and the fact that left multiplying by $G(\lambda,t)$ does not change the Stokes phenomenon.
{ It should be noted that, according to \cite[Corollary A.8]{Bertola-Tovbis-2013}, the asymptotic behaviors of $\hat{\Phi}_{k}(\lambda,t)$ in the two cases $t\neq p$ and $t=p$ are different. The reason is that there is a term
$\left(\frac{\sqrt{\xi}+\sqrt{y}}{\sqrt{\xi-y}}\right)^{\sigma_{3}}$ in \cite[A.38]{Bertola-Tovbis-2013}. When $t$ is away from $p$, $|y|\ll |\xi|$, then $\left(\frac{\sqrt{\xi}+\sqrt{y}}{\sqrt{\xi-y}}\right)^{\sigma_{3}}$ is asymptotic to the identity matrix. Nevertheless, when $t=p$, we have $|y|\gg |\xi|$, hence $\left(\frac{\sqrt{\xi}+\sqrt{y}}{\sqrt{\xi-y}}\right)^{\sigma_{3}}\sim(-i)^{\sigma_{3}}$. We should also mention that $\Phi(\lambda,t)$ in this paper is equal to $\Psi(\xi; v)\begin{bmatrix}1&0\\0&i\end{bmatrix}$ in \cite{Bertola-Tovbis-2013},
and therefore there is a corresponding difference between \cref{eq-canonical-solutions-near-pole} and \cite[(4.17)]{Bertola-Tovbis-2013}. (Note the different notations between the present paper and \cite{Bertola-Tovbis-2013}. For instance, the symbol $\xi$ in \cite{Bertola-Tovbis-2013} is $\lambda$ in the present paper.)}
\end{remark}

The rest of this paper is organized as follows. In \cref{sec:main},
we state our main results in two theorems and several corollaries.
The proof of the corollaries is also contained in this section.
Then, the method of ``uniform asymptotics'' is carried out case by case in \cref{sec:proof-lemma} to prove our main theorems.
\Cref{sec:numerical} focuses on the numerical simulations which verify our main results, and meanwhile,
we obtain some new heuristic information of the first Painlev\'{e} functions.
Finally, a few concluding remarks are given in \cref{sec:discussion}.

\section{Main results}
\label{sec:main}

We first obtain the leading term asymptotics of the Stokes multipliers of
the reduced triconfluent Heun equation~\cref{Schrodinger-equation-triconfluent-Heun} for large $H$ and arbitrary $p$.
Then, as consequences of these asymptotic formulas and \cref{eq-condition-stokes-multipliers},
we give asymptotic classifications of the \PI~solutions in terms of $p$ and $H$. The main results are stated in two cases.
\ \\
\paragraph{$\bullet$ Case I: $H\to-\infty$}
\ \\

Suppose that $y(t; p, H)$ is the real solution of \cref{PI equation} with a pole at $t=p$ and $H$ being the free parameter in the Laurent series.
Set
\[
H=-\frac{\xi^{6/5}}{7}
\quad \text{and}\quad
p=2C(\xi)\xi^{4/5}
\]
with $\xi$ being a large real number, and define
\begin{equation}\label{eq-def-kappa-hat-kappa}
\kappa^2=\frac{4}{\pi i}\int_{\eta_{1}}^{\eta_{2}}\left(s^3+C(\xi)s+1\right)^{\frac{1}{2}}ds \quad \text{and}\quad \hat{\kappa}^2=\frac{4}{\pi i}\int_{\eta_{0}}^{\eta_{1}}\left(s^3+C(\xi)s+1\right)^{\frac{1}{2}}ds,
\end{equation}
where $C(\xi)\in\mathbb{R}$ and $\eta_{i}\,(i=0,1,2)$ are the roots of $\eta^3+C(\xi)\cdot \eta+1=0$ with $\Re\eta_{0}<0$.
The other two roots $\eta_{1}, \eta_{2}$ are either positive ($\Re \eta_{1}\leq\Re \eta_{2}$) or form a conjugate pair ($\Im \eta_{1}\geq \Im \eta_{2}$).
Here and in what follows, the branch cuts are chosen
so that $\arg(s-\eta_{0})\in\left(-\pi,\pi\right]$, $\arg(s-\eta_{1,2})\in\left[-\frac{\pi}{2},\frac{3\pi}{2}\right)$;
see \cref{branches} for the branch cuts and the integration paths.
\begin{lemma}\label{lem-def-C0}
Whenever $C(\xi)>-3/2^{2/3}$, we have $\Re\hat{\kappa}^2<0$. Moreover, there exists a unique $C_{0}>0$ such that
\begin{equation}\label{eq-kappa-hat-kappa-sign}
\kappa^2\begin{cases}
>0,\quad C(\xi)<-3/2^{2/3},\\
=0,\quad C(\xi)=-3/2^{2/3},\\
<0,\quad C(\xi)>-3/2^{2/3},
\end{cases}
\qquad
\Im \hat{\kappa}^2
\begin{cases}
<0, \quad C(\xi)>C_{0},\\
=0,\quad C(\xi)=C_{0},\\
>0,\quad C(\xi)<C_{0}.
\end{cases}
\end{equation}
\end{lemma}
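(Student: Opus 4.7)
The plan is to reduce everything to a careful case analysis based on the discriminant $\Delta=-(4C^3+27)$ of the cubic $\eta^3+C\eta+1$, which vanishes exactly at $C=-3/2^{2/3}$. First I would dispose of the easy case $C\le -3/2^{2/3}$, where the cubic has three real roots $\eta_{0}<0\le\eta_{1}\le\eta_{2}$. On $(\eta_{1},\eta_{2})$ the cubic is negative, while on $(\eta_{0},\eta_{1})$ it is positive. Reading off the phases from the prescribed branches $\arg(s-\eta_{0})\in(-\pi,\pi]$ and $\arg(s-\eta_{1,2})\in[-\pi/2,3\pi/2)$ gives the total argument of $(s-\eta_{0})(s-\eta_{1})(s-\eta_{2})$ at once, so $(s^3+Cs+1)^{1/2}$ is purely imaginary on the first interval and real with a definite sign on the second. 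The factor $1/(\pi i)$ in front of each period then produces $\kappa^2>0$ strictly (with $\kappa^2=0$ at the coalescence $C=-3/2^{2/3}$) and forces $\hat\kappa^2$ to be purely imaginary with $\Im\hat\kappa^2>0$.

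Next I would treat $C>-3/2^{2/3}$, where $\eta_{0}<0$ is real and $\eta_{1}=\overline{\eta_{2}}$ with $\Im\eta_{1}>0$. The central tool is the Schwarz reflection $s\mapsto\bar s$, which is a symmetry of the integrand since $C$ is real. Applied to $\kappa^2$, the reflection identifies $\overline{\kappa^2}$ with $\pm\kappa^2$; matching branches in a neighborhood of the bifurcation $C=-3/2^{2/3}$ (where $\kappa^2=0$) fixes the sign, so $\kappa^2\in\mathbb{R}$, and the strict negativity for $C>-3/2^{2/3}$ then follows from the sign of $d\kappa^2/dC$ computed via differentiation under the integral sign (the endpoint contributions vanish because the square root does). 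An analogous reflection argument on $\hat\kappa^2$, after deforming the integration path to a contour symmetric with respect to the real axis, splits the period into a real part coming from the segment $[\eta_{0},\Re\eta_{1}]$ and an imaginary part from the conjugate-symmetric portion; the branch convention forces the real segment to contribute negatively, giving $\Re\hat\kappa^2<0$.

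For the sign of $\Im\hat\kappa^2$, I would combine two ingredients: continuity at $C=-3/2^{2/3}$, where by the first step $\Im\hat\kappa^2>0$, and the large-$C$ behaviour obtained via the rescaling $s=\sqrt{C}\,u$, which reduces the integrand to a period on the limit curve $w^2=u^3+u$ and produces $\Im\hat\kappa^2<0$ for $C$ sufficiently large. The intermediate value theorem then supplies some $C_{0}>-3/2^{2/3}$ with $\Im\hat\kappa^2(C_{0})=0$, and (after locating $C_{0}>0$ by a direct sign evaluation at $C=0$) its uniqueness follows from monotonicity: differentiating under the integral yields $\dfrac{d\hat\kappa^2}{dC}=\dfrac{2}{\pi i}\int_{\eta_{0}}^{\eta_{1}}\dfrac{s\,ds}{\sqrt{s^3+Cs+1}}$, which is itself a period of an elliptic differential of the second kind and whose imaginary part has a fixed sign by the same branch analysis.

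The hard part will be the branch bookkeeping in the complex-root regime, together with pinning down the sign of $d(\Im\hat\kappa^2)/dC$ needed for uniqueness: the branch cuts of $(s^3+Cs+1)^{1/2}$ rearrange themselves as $C$ crosses the critical value, so one must verify that the integration paths can be deformed consistently on a single sheet throughout, and that the coalescence of $\eta_{1}$ and $\eta_{2}$ is crossed without picking up spurious monodromy. Should direct monotonicity of $\Im\hat\kappa^2$ prove awkward, I would fall back on the Legendre-type bilinear relation tying $\kappa^2$ and $\hat\kappa^2$ as conjugate periods of the underlying elliptic curve $w^2=s^3+Cs+1$, which allows the monotonicity of $\Im\hat\kappa^2$ to be inferred from the already-controlled behaviour of $\kappa^2$.
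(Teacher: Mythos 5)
Your proposal is correct and follows essentially the same route as the paper's Appendix A: fix the signs of $\kappa^2$ and $\hat\kappa^2$ directly by branch/argument bookkeeping in the easy regimes (three real roots, $C\le 0$, and the $C\to+\infty$ limit), then differentiate the periods under the integral sign to get $\frac{d}{dC}$ of $\kappa^2$ and $\hat\kappa^2$ as periods of $s\,ds/\sqrt{s^3+C s+1}$ whose signs are again pinned down by the branch conventions, and conclude existence and uniqueness of $C_0$ by monotonicity plus the intermediate value theorem. The Schwarz-reflection framing of realness and the $s=\sqrt{C}\,u$ rescaling for the large-$C$ limit are cosmetic variants of the paper's direct argument estimates, and the hard point you flag (the sign of the derivative of $\Im\hat\kappa^2$) is exactly the step the paper resolves by splitting the path at the imaginary axis for $C>0$ while handling $C\le 0$ by direct sign evaluation.
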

The proof of this lemma is left in Appendix A. Numerical computation shows that the value of $C_{0}$ is closed to $2.004\,860\,503\,264\,124$. Now we state our results of case I: $H\to-\infty$ in the following theorem.

\begin{figure}
\centering
\subfigure{
\includegraphics[width=0.27\textwidth]{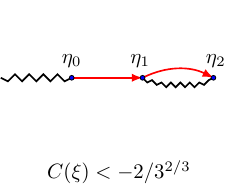}}
  \subfigure{
\includegraphics[width=0.27\textwidth]{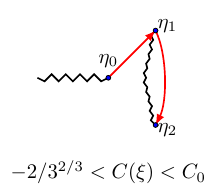}}
\subfigure{
\includegraphics[width=0.27\textwidth]{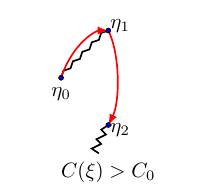}}
\caption{The branch cuts of the square roots and the integration contours in \cref{eq-def-kappa-hat-kappa}.}
\label{branches}
\end{figure}

\begin{theorem}\label{Thm-H-negative}
The asymptotic behaviors of the Stokes multipliers, corresponding to $y(t; p, H)$, are stated as follows.
\begin{enumerate}
\item [(i)] For any $\delta_{1}\in(0,C_{0})$, then as $\xi\to+\infty$
\begin{equation}\label{s-C-region1}
\begin{aligned}
s_{0}&=\left[-\frac{\sqrt{2\pi}i}{\Gamma\left(\frac{1}{2}+\frac{\xi\kappa^2}{2}\right)}+R_{0}(\xi)\right]2^{\frac{\xi\kappa^2}{2}}\xi^{\frac{\xi\kappa^2}{2}}e^{-2\xi E(\xi)}, \\
s_{1}&=\left[-\frac{\sqrt{2\pi} i}{\Gamma\left(\frac{1}{2}-\frac{\xi\kappa^2}{2}\right)}+R_{1}(\xi)\right]2^{-\frac{\xi\kappa^2}{2}}\xi^{-\frac{\xi\kappa^2}{2}}e^{2\xi E(\xi)+\frac{\xi\kappa^2\pi i}{2}}
\end{aligned}
\end{equation}
with
\begin{equation}\label{approx-bound-R0}
\begin{aligned}
R_{0}(\xi)&=\begin{cases}
 \mathcal{O}\left(\frac{\xi^{-1}}{\Gamma\left(\frac{1}{2}+\frac{\xi\kappa^2}{2}\right)}\right) ,  &\text{ when } \kappa^2>0,\\
  \mathcal{O}\left(\xi^{-1}\Gamma\left(\frac{1}{2}-\frac{\xi\kappa^2}{2}\right)\right) ,  &\text{ when } \kappa^2<0,\\
\end{cases}\\
R_{1}(\xi)&=\begin{cases}
 \mathcal{O}\left(\xi^{-1}\Gamma\left(\frac{1}{2}+\frac{\xi\kappa^2}{2}\right)\right) ,  &\text{ when } \kappa^2>0,\\
  \mathcal{O}\left(\frac{\xi^{-1}}{\Gamma\left(\frac{1}{2}-\frac{\xi\kappa^2}{2}\right)}\right) ,  &\text{ when } \kappa^2<0,\\
\end{cases}
\end{aligned}
\end{equation}
hold uniformly  for all $C(\xi)\in(-\infty,\delta_{1}]$;
\item [(ii)] For any $\delta_{2}\in(-3/2^{2/3},0)$, then as $\xi\to+\infty$
\begin{equation}\label{s-C-region2}
\begin{aligned}
s_{1}&=\left[-\frac{\sqrt{2\pi} i}{\Gamma\left(\frac{1}{2}-\frac{\xi\hat{\kappa}^2}{2}\right)}+\hat{R}_{1}(\xi)\right]2^{-\frac{\xi\hat{\kappa}^2}{2}}\xi^{-\frac{\xi\hat{\kappa}^2}{2}}e^{\frac{\xi\hat{\kappa}^2\pi i}{2}+2\xi F(\xi)},\\
s_{2}&=\left[\frac{-\sqrt{2\pi} i}{ \Gamma\left(\frac{1}{2}+\frac{\xi\hat{\kappa}^2}{2}\right)}+\hat{R}_{2}(\xi)\right]2^{\frac{\xi\hat{\kappa}^2}{2}}\xi^{\frac{\xi\hat{\kappa}^2}{2}}e^{-\xi\pi i \hat{\kappa}^2 -2\xi F(\xi)}
\end{aligned}
\end{equation}
with
\[
\hat{R}_{1}(\xi)=
  \mathcal{O}\left(\frac{\xi^{-1}}{\Gamma\left(\frac{1}{2}-\frac{\xi\hat{\kappa}^2}{2}\right)}\right),
\qquad
\hat{R}_{2}(\xi)=
\mathcal{O}\left(\xi^{-1}\Gamma\left(\frac{1}{2}-\frac{\xi\hat{\kappa}^2}{2}\right)\right),
\]
hold uniformly for all $C(\xi)\in[\delta_{2},+\infty)$.
\end{enumerate}
The explicit expressions of $E(\xi)$ and $F(\xi)$ are given by
\begin{equation*}
\begin{split}
E(\xi)&=I_{E}(\xi)+\frac{\kappa^2}{4}+\frac{\kappa^2\log{2}}{2}-\frac{\kappa^2\log{\kappa^2}}{4},\\
F(\xi)&=I_{F}(\xi)+\frac{\hat{\kappa}^2}{4}+\frac{\hat{\kappa}^2\log{2}}{2}-\frac{\hat{\kappa}^2\log{\hat{\kappa}^2}}{4}
\end{split}
\end{equation*}
and
\begin{equation} \label{eq-def-I-E(xi)}
\begin{aligned}
I_{E}(\xi)=2\int_{\eta_{2}}^{\infty e^{i\theta_{1}}}&\left[\left(s^3+C(\xi)s+1\right)^{\frac{1}{2}}-\left(s^{\frac{3}{2}}+\frac{C(\xi)}{2}s^{-\frac{1}{2}}\right)\right]ds\\
&-\left(\frac{4}{5}(\eta_{2})^{\frac{5}{2}}+2C(\xi)(\eta_{2})^{\frac{1}{2}}\right)\\
I_{F}(\xi)=2\int_{\eta_{1}}^{\infty e^{i\theta_{2}}}&\left[\left(s^3+C(\xi)s+1\right)^{\frac{1}{2}}-\left(s^{\frac{3}{2}}+\frac{C(\xi)}{2}s^{-\frac{1}{2}}\right)\right]ds\\
&-\left(\frac{4}{5}(\eta_{1})^{\frac{5}{2}}+2C(\xi)(\eta_{1})^{\frac{1}{2}}\right)
\end{aligned}
\end{equation}
with $\theta_{1}\in\left(-\pi,\frac{3\pi}{5}\right)$ and $\theta_{2}\in\left(-\frac{3\pi}{5},\pi\right)$. Moreover, we have $I_{F}(\xi)=\overline{I_{E}(\xi)}$ and
\begin{equation}\label{eq-relation-kappa-IE-IF}
\begin{aligned}
&\kappa^2\pi=\Im(2I_{F}(\xi)-2I_{E}(\xi))=4\Im{I_{F}(\xi)}=\Im{(2\hat{\kappa}^2\pi i)},\\
&\Re{(\hat{\kappa}^2\pi i+2 I_{F}(\xi))}=0
\end{aligned}
\end{equation}
when $-3/2^{2/3}\leq C(\xi)\leq C_{0}$.
\end{theorem}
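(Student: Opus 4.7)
My starting point is to substitute $H=-\xi^{6/5}/7$ and $p=2C(\xi)\xi^{4/5}$ into the reduced triconfluent Heun equation~\cref{Schrodinger-equation-triconfluent-Heun} and rescale $\lambda=\xi^{2/5}s$, which converts it into
\[
\frac{d^{2}Y}{ds^{2}}=4\xi^{2}\bigl(s^{3}+C(\xi)s+1\bigr)Y.
\]
This is a Schr\"odinger-type equation with large parameter $\xi$; its turning points are exactly the roots $\eta_{0},\eta_{1},\eta_{2}$ of the cubic, and the quantities $\pi i\kappa^{2}/2=\int_{\eta_{1}}^{\eta_{2}}\sqrt{s^{3}+C(\xi)s+1}\,ds$ and $\pi i\hat\kappa^{2}/2=\int_{\eta_{0}}^{\eta_{1}}\sqrt{s^{3}+C(\xi)s+1}\,ds$ are precisely the action integrals between the two pairs of turning points treated in cases (i) and (ii) respectively. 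Because left-multiplication by $G(\lambda,t)$ is isomonodromic, the Stokes multipliers of the rescaled equation coincide with those of \PI~itself, so the task reduces to determining $s_{0},s_{1},s_{2}$ by matching sectorial subdominant solutions of the RTHE against \cref{eq-canonical-solutions-near-pole}.

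\textbf{Uniform asymptotics via parabolic cylinder functions.} The main analytic tool, following the scheme of Bassom--Clarkson--Law--McLeod and its refinement by Wong--Zhao (also used in \cite{LongLLZ}), is to build an approximation to $Y$ that is uniformly valid in a neighborhood containing the two turning points relevant to each case. For case (i) I would construct a conformal change of variable $s\mapsto\zeta$ mapping a neighborhood of $\{\eta_{1},\eta_{2}\}$ onto a neighborhood of the two zeros of $\zeta^{2}/4-\alpha$ and preserving the WKB action; this forces $\alpha=\xi\kappa^{2}/2$. The rescaled equation becomes a small perturbation of the parabolic cylinder equation, so $Y$ decomposes into a linear combination of $D_{\nu}(\xi^{1/2}\zeta)$ and $D_{\nu}(-\xi^{1/2}\zeta)$, with $\nu=-1/2+\xi\kappa^{2}/2$, up to a nonvanishing multiplier and an additive error of relative order $\xi^{-1}$. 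Along the rays that border the canonical sectors $\Omega_{0}$ and $\Omega_{1}$, ordinary single-turning-point WKB produces the algebraic-exponential prefactor $(2\xi)^{\pm\xi\kappa^{2}/2}e^{\mp 2\xi E(\xi)}$, where $E(\xi)$ is exactly the regularized ``contour-to-infinity'' integral \cref{eq-def-I-E(xi)} after the rescaling is undone. The connection formulas for parabolic cylinder functions then relate the asymptotics of $Y$ in two adjacent sectors; comparing with the defining relation $\hat\Phi_{k+1}=\hat\Phi_{k}S_{k}$ delivers $s_{0}$ and $s_{1}$ in the announced form \cref{s-C-region1}, with the factor $\sqrt{2\pi}i/\Gamma(1/2+\xi\kappa^{2}/2)$ coming directly from the $\Gamma$-coefficient in those connection formulas. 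Case (ii) is entirely parallel, with the pair $(\eta_{0},\eta_{1})$ and the action $\hat\kappa^{2}$ replacing $(\eta_{1},\eta_{2})$ and $\kappa^{2}$.

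\textbf{Identities \cref{eq-relation-kappa-IE-IF} and the main obstacle.} The relations among $\kappa^{2}$, $\hat\kappa^{2}$, $I_{E}$ and $I_{F}$ in the overlap regime $-3/2^{2/3}\le C(\xi)\le C_{0}$ follow from contour deformation on the Riemann surface of $\sqrt{s^{3}+C(\xi)s+1}$: a cycle encircling all three finite branch points is homologous to a contour at infinity, and the expansion $\sqrt{s^{3}+C(\xi)s+1}=s^{3/2}+\tfrac{1}{2}C(\xi)s^{-1/2}+\mathcal{O}(s^{-5/2})$ used in the definitions of $I_{E}$ and $I_{F}$ renders that deformation convergent. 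The complex-conjugation symmetry forced by the real coefficients of the cubic, together with the conjugate location of $\eta_{1}$ and $\eta_{2}$ when $C(\xi)>-3/2^{2/3}$, gives $I_{F}=\overline{I_{E}}$; taking real and imaginary parts of the resulting period identity then yields \cref{eq-relation-kappa-IE-IF}. The hard part of the whole argument is proving uniform validity of the parabolic-cylinder approximation across the coalescence $\eta_{1}=\eta_{2}$ at $C(\xi)=-3/2^{2/3}$ (and across the analogous coalescence in case (ii)), where ordinary WKB breaks down: the conformal map aligning the two turning points with the zeros of $\zeta^{2}/4-\alpha$ degenerates smoothly, but its Schwarzian-type remainder must be controlled uniformly in $C(\xi)$ over the entire unbounded range stated in the theorem. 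It is exactly this uniform control that dictates the shape of the error estimates $R_{0},R_{1},\hat R_{1},\hat R_{2}$, with a $\Gamma$-factor appearing in the numerator or the denominator according to the sign of $\kappa^{2}$ (respectively $\hat\kappa^{2}$).
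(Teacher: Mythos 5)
Your proposal follows essentially the same route as the paper: rescale the RTHE to a large-parameter Schr\"odinger equation, build a uniform parabolic-cylinder approximation via an action-preserving conformal map near the relevant pair of turning points (the pair $(\eta_1,\eta_2)$ for part (i), $(\eta_0,\eta_1)$ for part (ii)), extract the Stokes multipliers by matching the sectorial asymptotics against \cref{eq-canonical-solutions-near-pole} and $\hat\Phi_{k+1}=\hat\Phi_k S_k$, and obtain \cref{eq-relation-kappa-IE-IF} by contour deformation and conjugation symmetry. You have also correctly identified that the uniform validity of the two-turning-point approximation (the paper's \cref{lem-uniform-case-I}, imported from the Bassom--Clarkson--Law--McLeod analysis) is the technical heart that produces the $\Gamma$-factor structure of the error terms $R_0,R_1,\hat R_1,\hat R_2$.
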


\begin{remark}
In \cref{Thm-H-negative},
we state the leading-term asymptotics of the Stokes multipliers of the reduced triconfluent Heun equation with large negative $H$ and arbitrary $p$.
Although the results are stated in two regions for $C(\xi)$ respectively,
one can show that the leading behaviors of the Stokes multipliers in \cref{s-C-region1} and \cref{s-C-region2}
are consistent in the overlapping region $\delta_{2}\leq C(\xi)\leq\delta_{1}$.
Indeed, from \cref{s-C-region1}, we find that
\begin{equation}\label{eq-s0-approx-1}
s_{0}= -i e^{-2\xi I_{E}(\xi)+\frac{\xi\kappa^2\pi i}{2}}\cos\left[\frac{\xi\kappa^2\pi}{2}+\mathcal{O}(\xi^{-1})\right], \quad \text{as}\quad \xi\to+\infty.
\end{equation}
On the other hand, since $s_{k}=i(1+s_{k+2}s_{k+3}), k\in\mathbb{Z}$,
we find that $s_{0}=\frac{i-s_{2}}{1+s_{1}s_{2}}$.
Hence, from \cref{s-C-region2}, one can obtain that whenever $\delta_{2}\leq C(\xi)\leq\delta_{1}$
\begin{equation}\label{eq-s0-approx-2}
s_{0}\sim -i\left[e^{\xi\hat{\kappa}^2\pi i}+e^{-2\xi I_{F}(\xi)}\right], \quad \text{as}\quad \xi\to+\infty.
\end{equation}
According to the relations of $\kappa^2$, $\hat{\kappa}^2$ and $I_{E}(\xi),\,I_{F}(\xi)$ in~\cref{eq-relation-kappa-IE-IF},
we further conclude that $\Re{(\hat{\kappa}^2\pi i)}=-2\Re{I_{F}(\xi)}=-2\Re{I_{E}(\xi)}$ and $\Im{(\hat{\kappa}^2\pi i)}=\frac{\kappa^2\pi}{2}=2\Im{I_{F}(\xi)}$
whenever $\delta_{2}\leq C(\xi)\leq\delta_{1}$.
These facts imply that the leading asymptotic approximations of $s_{0}$ in \cref{s-C-region1} and \cref{s-C-region2} are consistent.
A similar analysis can be done for $s_{1}$.
In view of $s_{k}=i(1+s_{k+2}s_{k+3}), k\in\mathbb{Z}$,
it is straightforward to derive the asymptotic behaviors of other Stokes multipliers.
\end{remark}

\begin{corollary}\label{cor-alternate-p-H-large}
Let $C(\xi)=C$ be fixed, $p=2C\xi^{4/5}$, $H=-\frac{\xi^{6/5}}{7}$ and $\kappa^2$ be defined in \cref{eq-def-kappa-hat-kappa} with $C(\xi)$ replaced by $C$.
\begin{enumerate}
\item [(i)]{  If $C\in(-3/2^{2/3},C_{0})$, then there exists $M_{1}>0$ and a sequence $\{\xi_{n}\}$ with $M_{1}<\xi_{1}<\xi_{2}<\cdots<\xi_{n}<\cdots$ } such that the \PI~solutions $y(t; p, H)$, with $\xi=\xi_{n}$, belongs to Type (B) and satisfies the asymptotic behavior \cref{eq-behavior-type-B} with
\begin{equation}\label{eq-def-hn}
h:=h_{n}\sim(-1)^{n}\frac{\sqrt{\pi} }{\Gamma\left(n\right)}2^{n}\xi_{n}^{-\frac{1-2n}{2}}e^{2\xi_{n} E(\xi_{n})}, \quad\text{as}\quad n\to+\infty.
\end{equation}
Moreover, we have
\begin{equation}\label{eq-def-xi-n}
\xi_{n}\sim\frac{2n-1}{-\kappa^2}\quad \text{as}\quad n\to+\infty.
\end{equation}
Further more, if $\xi\in\left(\xi_{2m-1}, \xi_{2m}\right)$, $m=1,2,\cdots$,
then $y(t; p, H)$ belongs to Type (A).
Otherwise, when $\xi\in\left(\xi_{2m}, \xi_{2m+1}\right)$, $y(t; p, H)$ belongs to Type (C).
\item [(ii)] If $C<-3/2^{2/3}$ or $C>C_{0}$, then there exists $M_{2}>0$ such that for any $\xi>M_{2}$ the \PI~solutions $y(t; p, H)$ belong to Type (C).
\end{enumerate}
\end{corollary}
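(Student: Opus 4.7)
\medskip
\noindent\textbf{Proof proposal.}
My plan is to reduce the trichotomy \cref{eq-condition-stokes-multipliers} to a single inequality and then read off the behavior from \Cref{Thm-H-negative}. For a real \PI~solution with real $p,H$, the symmetry \cref{eq-sk-s-k-relation} together with the periodicity $s_{k+5}=s_k$ forces $s_3=-\overline{s_2}$ and $s_4=-\overline{s_1}$, so $1+s_2s_3=1-|s_2|^2$ is real, $s_0=i(1-|s_2|^2)$ is purely imaginary, and the classification \cref{eq-condition-stokes-multipliers} reduces to the elementary test of $|s_2|$ against $1$. Moreover $h=s_1-s_4=2\Re s_1$.

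For part~(i), fix $C\in(-3/2^{2/3},C_0)$. \Cref{lem-def-C0} supplies $\kappa^2<0$, $\Re\hat\kappa^2<0$ and $\Im\hat\kappa^2>0$. I would invoke the approximation $s_0\sim-i[e^{\pi i\xi\hat\kappa^2}+e^{-2\xi I_F(\xi)}]$ derived in the remark following \Cref{Thm-H-negative}, and then use the identities \cref{eq-relation-kappa-IE-IF} — which read $\Re\hat\kappa^2=\kappa^2/2$, $\Re I_F=\pi\Im\hat\kappa^2/2$ and $\Im I_F=\pi\kappa^2/4$ — so that the two exponential pieces have identical real parts and collapse to
\begin{equation*}
\Im s_0\;\sim\;-2\,e^{-\pi\xi\,\Im\hat\kappa^2}\cos\!\left(\tfrac{\pi\xi\kappa^2}{2}\right).
\end{equation*}
Since the prefactor is strictly positive, the sign of $\Im s_0$ is governed entirely by the cosine, whose simple zeros lie at the leading-order sequence $\xi_n^{(0)}=(2n-1)/(-\kappa^2)$. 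Sign-tracking on consecutive intervals yields Type~(A) on $(\xi_{2m-1}^{(0)},\xi_{2m}^{(0)})$ and Type~(C) on $(\xi_{2m}^{(0)},\xi_{2m+1}^{(0)})$, and the intermediate value theorem upgrades the leading-order zeros to genuine Type~(B) points $\xi_n$ with $\xi_n\sim(2n-1)/(-\kappa^2)$, giving \cref{eq-def-xi-n}.

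Evaluating \cref{s-C-region1} at $\xi=\xi_n$ then produces $h_n$: the identity $-\xi_n\kappa^2/2=(2n-1)/2$ collapses $\Gamma(1/2-\xi_n\kappa^2/2)$ to $\Gamma(n)$ and yields the phase factor $e^{i\xi_n\kappa^2\pi/2}=i(-1)^n$; combined with $e^{2\xi_n E(\xi_n)}=-e^{2\xi_n\Re E(\xi_n)}$ (which follows from $\Im E=-\kappa^2\pi/2$, a consequence of the branch choice $\log\kappa^2=\log|\kappa^2|+i\pi$), the leading term of $s_1$ becomes real, and doubling it gives \cref{eq-def-hn}, with the residual $R_1(\xi_n)$ absorbed into the $\mathcal{O}(n^{-1/2})$ correction in the exponent. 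For part~(ii), I would apply the same formalism: when $C<-3/2^{2/3}$, $\kappa^2$, $E(\xi)$ and the relevant integrals are all real, so \cref{s-C-region1} yields $\Im s_0<0$ directly, hence Type~(C); when $C>C_0$ the negativity of $\Im\hat\kappa^2$ makes the factor $e^{-2\pi\xi\,\Im\hat\kappa^2}$ in $|s_2|^2$ grow without bound, forcing $|s_2|>1$ for all sufficiently large $\xi$, again Type~(C).

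The main obstacle I foresee is to upgrade a leading-order zero of $\Im s_0$ to an exact zero carrying the claimed $\mathcal{O}(n^{-1/2})$ correction. At $\xi=\xi_n$ the Gamma argument $1/2+\xi_n\kappa^2/2=1-n$ hits a non-positive integer, so the error estimate $R_0=\mathcal{O}(\Gamma(1/2-\xi\kappa^2/2)/\xi)$ in \cref{approx-bound-R0} becomes large in magnitude, and a usable bound near $\xi_n$ will require either a local uniform refinement of \Cref{Thm-H-negative} or a perturbative argument that tracks how far the true zero lies from $\xi_n^{(0)}$. A subsidiary step is verifying that $\partial_\xi\Im s_0$ does not vanish near each leading-order zero, so that the Type~(B) point is simple and can be uniquely localized.
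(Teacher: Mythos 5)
Your proposal is correct and follows essentially the same route as the paper's own proof: both arguments locate the Type (B) points as the zeros of the purely imaginary $s_{0}(\xi)$ given by \cref{eq-s0-approx-1}--\cref{eq-s0-approx-2}, track its sign on the complementary intervals to separate Types (A) and (C), obtain \cref{eq-def-hn} by substituting the asymptotics of $s_{1}$ and $s_{4}=s_{-1}=-\overline{s_{1}}$ into \cref{eq-parameter-h}, and settle part (ii) via $\kappa^{2}>0$ when $C<-3/2^{2/3}$ and via $|s_{2}|>1$ when $C>C_{0}$. The localization issue you flag at the end --- upgrading zeros of the leading term to genuine zeros of $s_{0}$ despite the degeneration of the bound \cref{approx-bound-R0} near $\xi=\xi_{n}$ --- is a legitimate concern, but the paper's proof glosses over it in exactly the same way, so it does not separate your argument from theirs.
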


\begin{figure}
\centering
  \includegraphics[width=0.8\textwidth]{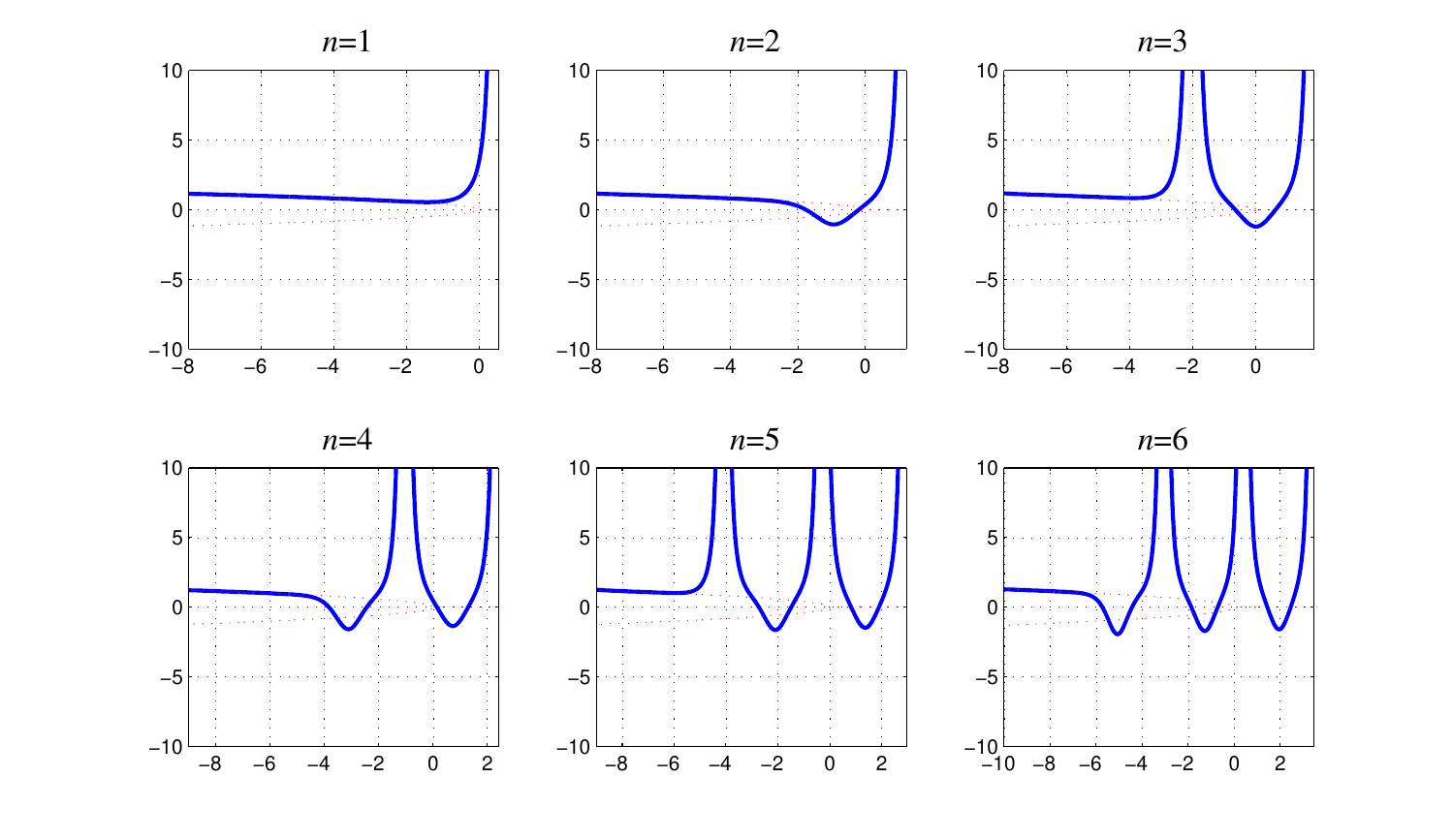}
  \caption{The \PI~solutions $y(t; p, H)$ with $p=2C\cdot\xi_{n}^{4/5}$ and $H=-\frac{\xi_{n}^{6/5}}{7}$, where $C=0.5$.}
  \label{separatrixsolutions}
\end{figure}

\begin{figure}
\centering
  \includegraphics[width=0.8\textwidth]{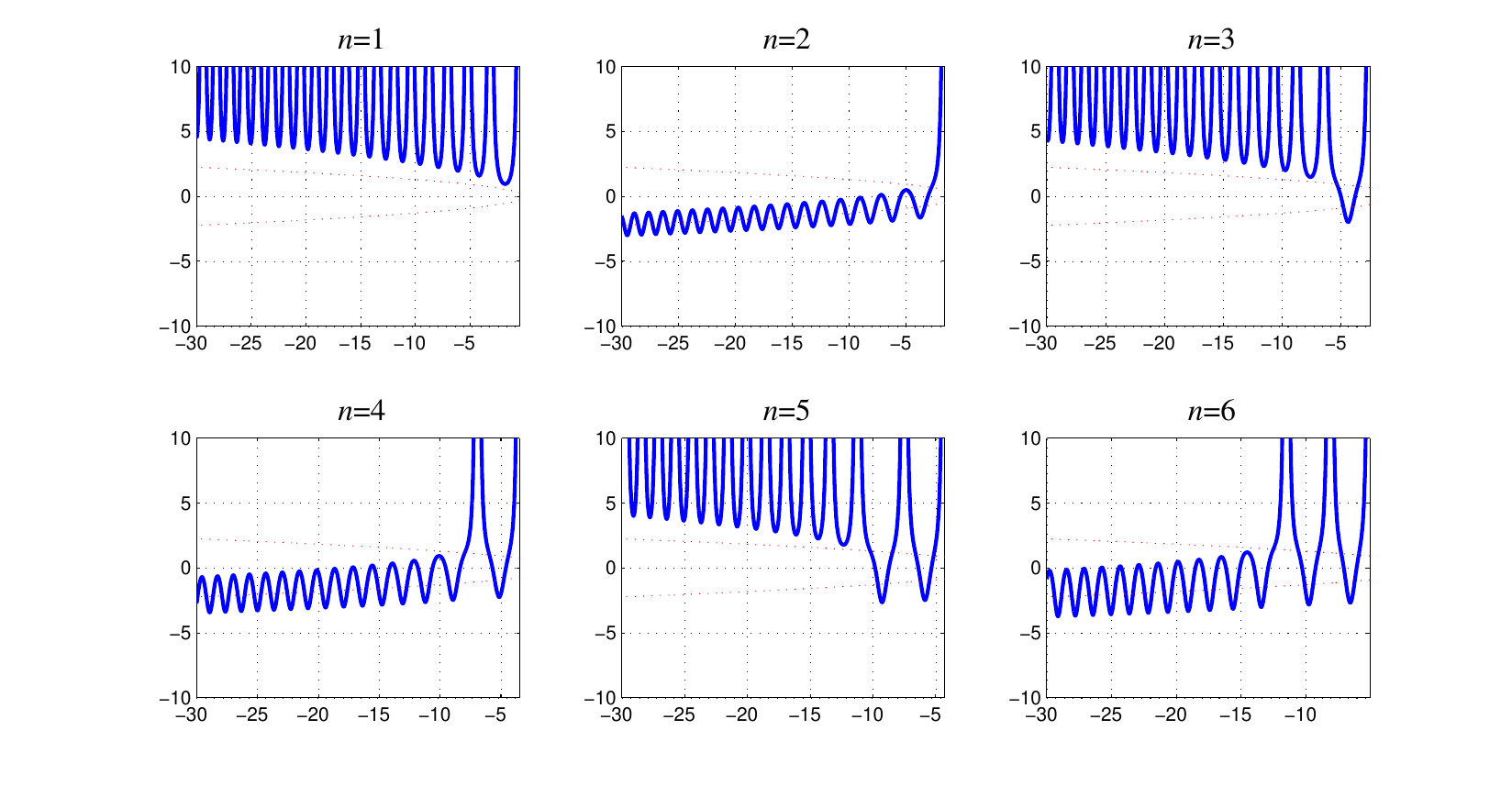}
  \caption{The \PI~solutions $y(t; p, H)$ with $p=2C\left(\tilde{\xi}_{n}\right)^{4/5}$ and $H=-\frac{\left(\tilde{\xi}_{n}\right)^{6/5}}{7}$, where $C=0.5$ and $\tilde{\xi}_{n}=\frac{2n-1-0.5}{\kappa^2}$. Comparing the values of $\tilde{\xi}_{n}$ to $\xi_{n}$ in \cref{eq-def-xi-n}, we see that $\tilde{\xi}_{1}<\xi_{1}<\tilde{\xi}_{2}<\xi_{2}<\cdots <\tilde{\xi}_{5}<\xi_{5}<\tilde{\xi}_{6}<\xi_{6}$.}
  \label{solutions1}
\end{figure}

\begin{proof}
For any fixed $C\in(-3/2^{2/3},C_{0})$, one can choose $C<\delta_{1}<C_{0}$ and $-3/2^{2/3}<\delta_{2}<C$.
{ It then follows from \cref{s-C-region1} or \cref{eq-s0-approx-1}
that there exists $M_{1}>0$ such that $s_{0}:=s_{0}(\xi)$ has a sequence of positive simple zeros on $[M_{1},+\infty)$ if we regard $s_{0}$ as a function of $\xi$. Choose one of the zeros, denoted by $\xi_{1}$, such that $s_{0}'(\xi_{1})>0$. The other zeros larger than $\xi_{1}$ are denoted by $\xi_{n}, n=2,3,\cdots$, arranged in the ascending order. Then we have $s_{0}(\xi)>0$ when $\xi\in\left(\xi_{2n-1},\xi_{2n}\right)$
and $s_{0}<0$ when $\xi\in\left(\xi_{2n},\xi_{2n+1}\right)$.
Moreover, according to \eqref{eq-s0-approx-1}, we know that there exists an integer $N\geq \frac{\kappa^2 M_{1}-1}{2}$ (depending on the selection of $\xi_{1}$) such that
\begin{equation}\label{eq-approx-xi-n}
\xi_{n}\sim\frac{2n+2N-1}{-\kappa^2}\quad \text{as}\quad n\to+\infty.
\end{equation}
Since $N\ll n$, \eqref{eq-def-xi-n} also holds as $n\to+\infty$.}
A combination of these facts and \cref{eq-condition-stokes-multipliers}
yields the first conclusion of \cref{cor-alternate-p-H-large} except for \cref{eq-def-hn}.
Note that $s_{4}=s_{-1}=-\overline{s_{1}}$,
then \cref{eq-def-hn} follows by substituting the asymptotic behavior of $s_{1}$ and $s_{-1}$ into \cref{eq-parameter-h}.

When $C<-3/2^{2/3}$, it is readily seen from \cref{eq-def-kappa-hat-kappa} that $\kappa^2>0$,
and so that $\xi\kappa^2\to+\infty$.
It then follows from \cref{s-C-region1} that $\Im s_{0}=1+s_{2}s_{3}$ remains negative in this case.
When $C>C_{0}$, by the definition of $\hat{\kappa}^2$ in \cref{eq-def-kappa-hat-kappa} or according to \cref{lem-def-C0},
we have $\Re(\hat{\kappa}^2\pi i)>0$ and $\Im(\hat{\kappa}^2\pi i)<0$,
which imply that $\arg{\hat{\kappa}^2}\in\left(\pi,\frac{3}{2}\pi\right)$.
Hence, it follows from \cref{s-C-region2} that
\begin{equation}\label{s2-approx-C>C0}
\begin{aligned}
s_{2}
&\sim -\sqrt{2\pi} i \Gamma\left(\frac{1}{2}+e^{-\pi i}
\frac{\xi\hat{\kappa}^2}{2}\right)\cos\left(\frac{1}{2}\xi\hat{\kappa}^2\right)2^{\frac{\xi\hat{\kappa}^2}{2}}
\xi^{\frac{\xi\hat{\kappa}^2}{2}}e^{-\xi\pi i \hat{\kappa}^2 -2\xi F(\xi)}
\\
&\sim -i\cdot e^{-2\xi I_{F}(\xi)},
\end{aligned}
\end{equation}
and therefore $|s_{2}|>1$ for all $C>C_{0}$ provided that $\xi$ is large enough.
In view of $s_{0}=i(1+s_{2}s_{3})$ and $s_{2}=-\overline{s_{3}}$,
we conclude that, { there exists $M_{2}>0$ such that for all $\xi>M_{2}$}, $\Im s_{0}=1+s_{2}s_{3}$ remains negative in this case.
This completes the proof of \cref{cor-alternate-p-H-large}.
\end{proof}

{ \begin{remark}
In part (i) of \cref{cor-alternate-p-H-large}, we only show the existence of $\xi_{n}$ which corresponding to the separatrix solutions and obtain the large $n$ asymptotic behavior of $\xi_{n}$. Strictly speaking, we do not know how the PI solutions evolve when $\xi$ is finite which is still an open problem. This is because that we only obtain the leading asymptotic behavior of $s_{0}(\xi)$ as $\xi\to+\infty$. Actually, in the proof of  \cref{cor-alternate-p-H-large}, the selection for $\xi_{1}$ is not unique, which also suggests that $\xi_{1}$ may not be the first zero of $s_{0}(\xi)$. Hence, there is an indeterminate integer $N\geq 0$ in \eqref{eq-approx-xi-n} if one intends to use this formula to approximate $\xi_{n}$. Nevertheless, numerical simulation indicates that $\xi_{n}$ can indeed be chosen as the $n$-th zero of $s_{0}(\xi)$.
\end{remark}
}

\begin{figure}[htp]
\centering
  \includegraphics[width=0.8\textwidth]{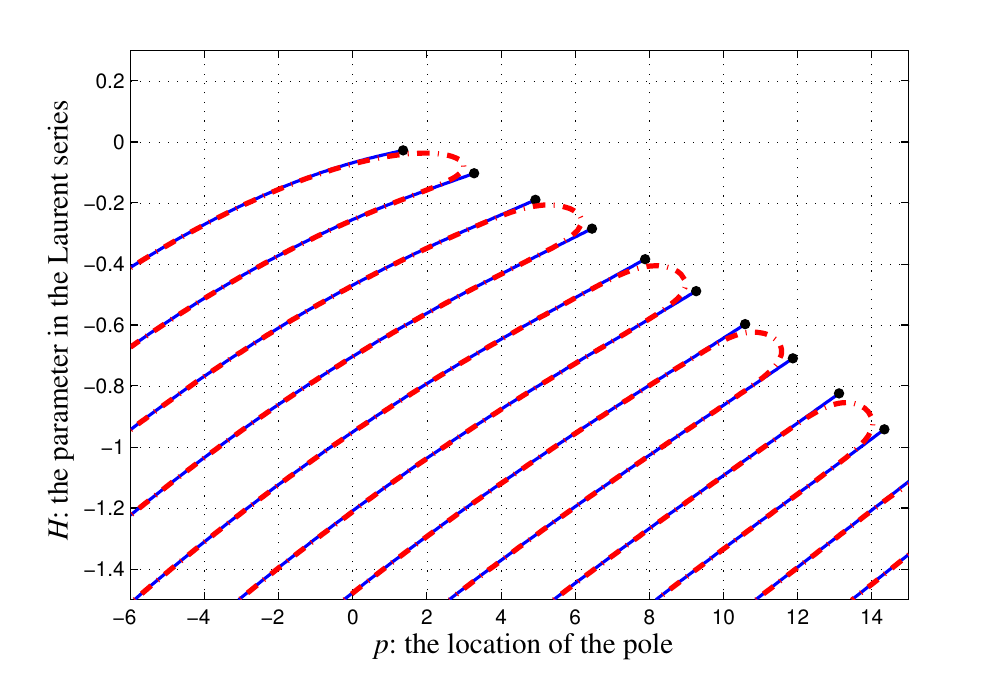}
  \caption{The curves $\Sigma_{n}$ of $(p,H)$ that give rise to separatrix solutions.
  The blue lines are asymptotics of the curves $\Sigma_{n}$ stated in \cref{eq-Sigma-n},
  and the red dash-doted curves are the exact values from the numerical computation.
  The blue lines end at the black points since the parameter $C$ in the equation of $\Sigma_{n}$ is restricted in $(-3/2^{2/3},C_{0})$.}
  \label{Fig-curves-Sigma-n}
\end{figure}

\cref{cor-alternate-p-H-large} gives an asymptotic classification of the \PI~solutions with large negative $H$ and arbitrary $p\in\mathbb{R}$.
Precisely, we see that if $C\in(-3/2^{2/3},C_{0})$,
the three types of solutions appear alternately; see \cref{separatrixsolutions} and \cref{solutions1}.
However, when $C<-3/2^{2/3}$ or $C>C_{0}$, all the solutions are Type (C),
which means that they all have infinite number of poles on the negative real axis.
Note that $\kappa^2$ depends on $C$,
hence $\xi_{n},\, p_{n}$ and $H_{n}$ are also functions of $C$ with $C\in(-3/2^{2/3},C_{0})$.
Therefore, we could restate \cref{cor-alternate-p-H-large} as follows.
There exists a sequence of curves (see \cref{Fig-curves-Sigma-n})
\begin{equation}\label{eq-Sigma-n}
\Sigma_{n}: \begin{cases}p&=p_{n}(C),\\H&=H_{n}(C),\end{cases}\quad C\in(-3/2^{\frac{2}{3}},C_{0})
\end{equation}
on the $(p,H)$ plane such that the \PI~solutions $y(t; p,H)$, with $(p,H)$ lying on these curves, are separatrix solutions.
{ Moreover, we have
\begin{eqnarray}\label{eq-limiting-equation-Sigma-n}
\begin{cases}
p_{n}(C)\sim 2C\left(\frac{2n-\frac{1}{2}}{-\kappa^2}\right)^{\frac{4}{5}},\\
H_{n}(C)\sim -\frac{1}{7}\left(\frac{2n-\frac{1}{2}}{-\kappa^2}\right)^{\frac{6}{5}},\\
\end{cases}\quad C\in(-3/2^{\frac{2}{3}},C_{0})
\end{eqnarray}
as $n\to+\infty$.}
Further more, if the point $(p,H)$ is in the regions between $\Sigma_{2m-1}$ and $\Sigma_{2m}$,
then the corresponding solution $y(t; p,H)$ is Type (A).
Likewise, the solution $y(t; p,H)$ belongs to Type (C) when $(p,H)$ is in the regions between $\Sigma_{2m}$ and $\Sigma_{2m+1}$.
This result of classification is verified by numerical computations; see \cref{Fig-curves-Sigma-n}.
When $C\to -3/2^{2/3}$,
we see that both $p_{n}$ and $H_{n}$ tend to negative infinity;
while as $C\to C_{0}$,
we see $(p_{n},H_{n})$ tends to certain fixed points,
which are denoted by the black points in \cref{Fig-curves-Sigma-n}.

{
\begin{remark}
In the above analysis, we only show the existence of $\Sigma_{n}$ and obtain the limiting form equation \eqref{eq-limiting-equation-Sigma-n} of $\Sigma_{n}$ as $n\to+\infty$ since we only obtain the asymptotic behaivor of $s_{0}(\xi)$ as $\xi\to+\infty$. This is the reason why we say that \cref{cor-alternate-p-H-large} only gives an asymptotic classification of the \PI~solutions. Nevertheless, in \cref{Fig-curves-Sigma-n}, one will find that the numerical and asymptotic curves are very close to each other as $H\to-\infty$ even when $n=1$. It is not a coincidence. In fact, it should be noted that if $\xi\to+\infty$ and $C(\xi)\to -3/2^{2/3}$ simultaneously, we have $\kappa^2\to0$, hence $\frac{-\xi\kappa^2\pi}{2}$ can be asymptotic to any fixed positive constant. It seems that we may choose $\xi_{n}$ with $\xi_{n}= \frac{(2n-1)}{-\kappa^2}\left(1+\mathcal{O}(\xi^{-1})\right)$ for all $n=1,2,\cdots$, not just large $n$.
This should not be confused with \eqref{eq-def-xi-n} where we require $n$ to be large because $\kappa^2$ is fixed there.
\end{remark}
}

The asymptotic behavior of the Stokes multipliers in \cref{Thm-H-negative} is valid not only for fixed $C$,
but also uniformly for all $C(\xi)$ in the corresponding regions,
and thus we may assume $C(\xi)$ depends on $\xi$.
Let $C(\xi)=\frac{1}{2}p\xi^{-4/5}$ with $p$ fixed,
the following corollary is also a direct consequence of \cref{Thm-H-negative},
which gives an asymptotic classification of \PI~solutions for fixed $p$ and large negative $H$.
It can be regarded as another kind of nonlinear eigenvalue phenomenon
similar to the initial value problem in~\cite{Bender-Komijani-2015, Bender-Komijani-Wang-2019, LongLLZ}.
\begin{corollary}\label{cor-classfy-fix-p}
For any fixed $p\in\mathbb{R}$, { there exists $M_{3}>0$ and a sequence $\{\xi_{n}\}$, with $M_{3}<\xi_{1}<\xi_{2}<\cdots$,
such that the \PI~solution $y(t; p, H_{n})$ with $\{H_{n}=-\frac{1}{7}\left(\xi_{n}\right)^{6/5}\}$ belongs to Type (B).}
The asymptotic behaviors of the parameters $h:=h_{n}$ in \cref{eq-behavior-type-B} and $\xi_{n}$
are given in \cref{eq-def-hn} and \cref{eq-def-xi-n} respectively.
In this case, the asymptotic behavior of $\kappa^2$ can be derived more explicitly as
\begin{equation}
\begin{aligned}
\kappa^2:=\kappa^2(\xi_{n})
&=\frac{1}{\pi i}\int_{e^{\frac{\pi i}{3}}}^{e^{-\frac{\pi i}{3}}}4(s^3+1)^{\frac{1}{2}}ds+\mathcal{O}(\xi_{n}^{-4/5})
\\
&=\frac{-4\sqrt{3}}{5\pi}B\left(\frac{1}{2},\frac{1}{3}\right)+\mathcal{O}\left(n^{-\frac{4}{5}}\right)
\end{aligned}
\end{equation}
as $n\to+\infty$.
Furthermore, the \PI~solutions $y(t; p, H)$ with $H_{2m-1}<H<H_{2m}$ are of Type (A)
and the ones with $H_{2m}<H<H_{2m+1}$ are of Type (C).
\end{corollary}

The proof of \cref{cor-classfy-fix-p} is very similar as the one of \cref{cor-alternate-p-H-large}.
The mere difference is that we take $C(\xi)=p \xi^{-1/5}$ with $p$ being fixed in this case.
When $p\to 0$, we can obtain a similar result.
For instance, setting $p=2P\xi^{-1/5}$ with fixed $P\in\mathbb{R}$, {\it i.e.} $C(\xi)=\frac{P}{\xi}$,
we have the following corollary that gives a classification of the \PI~solutions with large $H$ and small $p$.
\begin{corollary}\label{cor-classfy-small-p}
For any fixed $P\in\mathbb{R}$, { there exists $M_{4}>0$ and a sequence $\{\xi_{n}\}$, with $M_{3}<\xi_{1}<\xi_{2}<\cdots$, such that, for each $n$,
the \PI~solution $y(t; p_{n}, H_{n})$ with $p_{n}=2P\xi_{n}^{-1/5}$ and $H_{n}=-\frac{\xi_{n}^{6/5}}{7}$ belongs to Type (B).}
The asymptotic behaviors of the parameters $h:=h_{n}$ in \cref{eq-behavior-type-B} and $\xi_{n}$ are also given in \cref{eq-def-hn} and \cref{eq-def-xi-n} respectively,
and the asymptotic behavior of $\kappa^2$ replaced by
\begin{equation}
\begin{aligned}
\kappa^2:=\kappa^2(\xi_{n})
&=\frac{1}{\pi i}\int_{e^{\frac{\pi i}{3}}}^{e^{-\frac{\pi i}{3}}}4(s^3+1)^{\frac{1}{2}}ds+\mathcal{O}(\xi^{-1})\\
&=\frac{-4\sqrt{3}}{5\pi}B\left(\frac{1}{2},\frac{1}{3}\right)+\mathcal{O}\left(\frac{1}{n}\right)
\end{aligned}
\end{equation}
as $n\to+\infty$.
Furthermore, the \PI~solutions $y(t; p, H)$ with $H_{2m-1}<H<H_{2m}$ and $p_{2m}<p<p_{2m-1}$ are of Type (A)
and the ones with $H_{2m}<H<H_{2m+1}$ and $p_{2m+1}<p<p_{2m}$ are of Type (C).
\end{corollary}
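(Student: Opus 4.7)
The strategy is to adapt the argument behind \cref{cor-alternate-p-H-large} to the new scaling $p=2P\xi^{-1/5}$, which corresponds to $C(\xi)=P/\xi\to 0$. Since $0$ lies strictly between $-3/2^{2/3}$ and $C_{0}$, I can fix $\delta_{2}\in(-3/2^{2/3},0)$ and $\delta_{1}\in(0,C_{0})$ and note that $C(\xi)\in[\delta_{2},\delta_{1}]$ for all sufficiently large $\xi$. Both parts of \cref{Thm-H-negative} therefore apply uniformly along the family, and in particular the consistent approximation \cref{eq-s0-approx-1} for $s_{0}$ is available on this whole range.

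Next I would locate the positive zeros $\xi_{n}$ of $s_{0}(\xi)$. From \cref{eq-s0-approx-1}, to leading order these are pinned down by $\cos(\xi\kappa^{2}\pi/2)=0$, so $\xi_{n}\sim(2n-1)/(-\kappa^{2})$ as $n\to+\infty$. At each such $\xi_{n}$ we have $1+s_{2}s_{3}=0$, which by \cref{eq-condition-stokes-multipliers} places $y(t;p_{n},H_{n})$ in Type~(B). The sign of $s_{0}$ flips across each zero, and together with \cref{eq-condition-stokes-multipliers} this identifies the intervening regions as Types (A) and (C) in the stated alternating pattern; this step is essentially verbatim from the proof of \cref{cor-alternate-p-H-large}.

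The genuinely new ingredient is the evaluation of $\kappa^{2}$ in the limit $C(\xi)\to 0$. The roots $\eta_{0},\eta_{1},\eta_{2}$ of $\eta^{3}+C(\xi)\eta+1=0$ depend analytically on $C(\xi)$ and collapse to $\{-1,e^{i\pi/3},e^{-i\pi/3}\}$ at $C=0$, so deforming the contour in \cref{eq-def-kappa-hat-kappa} and Taylor expanding the integrand in $C(\xi)$ yields
\[
\kappa^{2}(\xi)=\frac{1}{\pi i}\int_{e^{i\pi/3}}^{e^{-i\pi/3}} 4\,(s^{3}+1)^{1/2}\,ds+\mathcal{O}(\xi^{-1}),
\]
which, when specialized to $\xi=\xi_{n}$, is $\mathcal{O}(n^{-1})$ in view of $\xi_{n}\sim(2n-1)/(-\kappa^{2})$. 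A standard substitution ({\it e.g.}\ $u=(s^{3}+1)^{-1}$, or folding the contour onto the real axis and invoking Euler's beta integral) then reduces the leading coefficient to $-4\sqrt{3}\,B(1/2,1/3)/(5\pi)$.

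Finally, $h_{n}$ is extracted by evaluating the formula for $s_{1}$ in \cref{s-C-region1} at $\xi=\xi_{n}$ and using $s_{4}=-\overline{s_{1}}$ (a consequence of \cref{eq-sk-s-k-relation} for real solutions) to obtain $h_{n}=s_{1}-s_{4}=2\Re s_{1}(\xi_{n})$; the gamma factor $\Gamma(1/2-\xi_{n}\kappa^{2}/2)$ reduces to $\Gamma(n)(1+o(1))$ and the remaining phase factors combine to produce the alternating sign $(-1)^{n}$, yielding \cref{eq-def-hn}. The main obstacle I anticipate is controlling all of the $o(1)$ corrections uniformly as $C(\xi)$ sweeps through $0$: the remainder bounds in \cref{Thm-H-negative}, the perturbation of $\kappa^{2}$ and $E(\xi)$ around their $C=0$ values, and the implicit shift in the zero-locations $\xi_{n}$, must all be sharp enough that the displayed leading orders survive. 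This is precisely where the uniformity assertion built into \cref{Thm-H-negative} does its work.
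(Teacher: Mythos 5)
Your proposal is correct and follows essentially the same route as the paper, which explicitly states that this corollary is proved exactly as \cref{cor-alternate-p-H-large} with $C(\xi)=P/\xi\to 0$, relying on the uniformity in $C(\xi)$ of \cref{Thm-H-negative}; your added details (the collapse of the turning points to $\{-1,e^{\pm i\pi/3}\}$, the $\mathcal{O}(\xi^{-1})$ perturbation of $\kappa^2$ translating to $\mathcal{O}(1/n)$ via $\xi_n\sim(2n-1)/(-\kappa^2)$, and $h_n=2\Re s_1(\xi_n)$ from $s_4=-\overline{s_1}$) are precisely the intended specializations.
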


\begin{remark}
In the above three corollaries, we only state the limiting-form connection formulas for $h:=h_{n}$ in the asymptotic behavior of the Type (B) solutions.
One may derive the corresponding connection formulas for the parameters in the asymptotic behaviors of Type (A) and Type (C) solutions,
by a simple substitution of the leading asymptotics for the Stokes multipliers into \cref{eq-parameter-d-theta} and \cref{eq-parameter-rho-sigma}.
\end{remark}

Recall that the Stokes multipliers corresponding to the \PI's real tritronqu\'{e}e solution are $s_{0}=s_{1}=s_{-1}=i$ and $s_{2}=s_{-2}=0$.
From \cref{Thm-H-negative}, we can also obtain the asymptotic behaviors of $p_{n}$ and $H_{n}$ in the Laurent series near the $n$-th pole of this special \PI~solution as $n\to+\infty$.
\begin{corollary}\label{cor-tritronquee}
Let $p_{n}$ be the location of $n$-th pole (in the ascending order) of the real tritronqu\'{e}e \PI~solution
and $H_{n}$ be the free parameter in the corresponding Laurent series.
Then, as $n\to+\infty$, we have
\begin{equation}\label{eq-H-p-Tritronquee}
p_{n}\sim 2C_{0}\left(\frac{4n-2}{\kappa^2(C_{0})}\right)^{4/5}\quad \text{and}\quad H_{n}\sim -\frac{1}{7}\left(\frac{4n-2}{\kappa^2(C_{0})}\right)^{6/5},
\end{equation}
where $C_{0}$ is the constant given in \cref{lem-def-C0}
and $\kappa^2(C_{0})$ is defined in \cref{eq-def-kappa-hat-kappa} with $C(\xi)\equiv C_{0}$.
\end{corollary}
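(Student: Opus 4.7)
The strategy rests on the defining Stokes data of the real tritronqu\'ee solution, $s_0=s_1=s_{-1}=i$ and $s_{\pm 2}=0$. Since at every real pole $(p_n,H_n)$ the associated Stokes multiplier $s_2$ must vanish, the plan is to parameterize as in \Cref{Thm-H-negative} by setting $p=2C(\xi)\xi^{4/5}$ and $H=-\xi^{6/5}/7$, insert the asymptotic formula \cref{s-C-region2} for $s_2$, and then hunt for the large-$\xi$ real solutions of $s_2=0$.

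Inspecting \cref{s-C-region2} shows that the exponential, power and $2^{\xi\hat{\kappa}^2/2}$ factors are all non-vanishing, so $s_2$ can vanish to leading order only when the reciprocal Gamma factor $1/\Gamma(\tfrac12+\xi\hat{\kappa}^2/2)$ does. The zeros of $1/\Gamma$ lie only at the non-positive integers on the real axis, so for real large $\xi$ one needs $\hat{\kappa}^2$ to be asymptotically real and $\tfrac12+\xi\hat{\kappa}^2/2\to -m$ for some $m=0,1,2,\ldots$. By \Cref{lem-def-C0}, $\hat{\kappa}^2$ is real precisely at $C(\xi)=C_0$, so the locus of tritronqu\'ee poles must satisfy $C(\xi_n)\to C_0$ as $n\to\infty$. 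At $C=C_0$ the relation \cref{eq-relation-kappa-IE-IF} gives $\kappa^2(C_0)=2\,\Re\hat{\kappa}^2(C_0)=2\hat{\kappa}^2(C_0)$, whence $\hat{\kappa}^2(C_0)=\kappa^2(C_0)/2$. The quantization $\xi_n\,\hat{\kappa}^2(C_0)\sim -(2m+1)$ together with the natural identification $m=n-1$ then produces
$$\xi_n\sim\frac{4n-2}{\kappa^2(C_0)},$$
the sign being consistent with $\kappa^2(C_0)<0$ (so that the resulting $\xi_n$ is positive). Substituting this into $p_n=2C_0\,\xi_n^{4/5}$ and $H_n=-\xi_n^{6/5}/7$ yields the claimed asymptotics \cref{eq-H-p-Tritronquee}.

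The main obstacle will be promoting this heuristic leading-order reasoning into a rigorous argument. Concretely, near each pole of $\Gamma(\tfrac12+\xi\hat{\kappa}^2/2)$ one must invoke an implicit function argument to show that the full equation $s_2(\xi,C)=0$, including the error term $\hat{R}_2(\xi)=\mathcal{O}(\xi^{-1}\Gamma(\tfrac12-\xi\hat{\kappa}^2/2))$, admits a genuine real solution $(\xi_n,C(\xi_n))$ with $C(\xi_n)\to C_0$ at a controllable rate. One must also use the symmetric condition $s_{-2}=0$ together with \cref{eq-sk-s-k-relation} to verify that both conditions are consistent and in fact pin down the same $(\xi_n,C(\xi_n))$. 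A secondary but nontrivial point is matching the quantization integer $m$ with $n-1$, where $n$ orders the real poles ascendingly: this should follow from the monotonicity of $\xi\mapsto 2C_0\xi^{4/5}$ together with a counting argument on the real zeros of $s_2$ as $\xi$ increases, but its careful execution is the place where sign conventions and indexing have to be tracked diligently.
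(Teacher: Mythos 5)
Your argument is correct and follows essentially the same route as the paper: the paper likewise reduces the problem to $s_{2}=0$, observes via \cref{s-C-region2} that this forces $C=C_{0}$, locates the zeros through the reciprocal Gamma factor (after first applying the reflection formula to turn $1/\Gamma(\tfrac12+\tfrac{\xi\hat{\kappa}^2}{2})$ into a cosine), uses $\hat{\kappa}^2(C_{0})=\kappa^2(C_{0})/2$ from \cref{eq-relation-kappa-IE-IF}, and substitutes back into $p=2C_{0}\xi^{4/5}$, $H=-\xi^{6/5}/7$. The only differences are cosmetic: you read the quantization directly off the poles of $\Gamma(\tfrac12+\tfrac{\xi\hat{\kappa}^2}{2})$ rather than off the zeros of the cosine, and you inherit the paper's own sign convention in writing $\kappa^2(C_{0})$ (which is negative) rather than $-\kappa^2(C_{0})$ in the denominator, so the positivity of $\xi_{n}$ should really be stated with $-\kappa^{2}(C_{0})$ as in \cref{eq-def-xi-n}.
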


\begin{proof}
According to \cref{s-C-region2}, we see that $s_{2}$ vanishes if and only if $C=C_{0}$.
Moreover, the leading asymptotic behavior of $s_{2}$ can be simplified as
\begin{equation}\label{eq-s2-simplified-C0}
\begin{aligned}
s_{2}
&\sim -\sqrt{2\pi} i \Gamma\left(\frac{1}{2}+e^{-\pi i}\frac{\xi\hat{\kappa}^2}{2}\right)
\cos\left(\frac{1}{2}\xi\hat{\kappa}^2\right)2^{\frac{\xi\hat{\kappa}^2}{2}}\xi^{\frac{\xi\hat{\kappa}^2}{2}}e^{-\xi\pi i \hat{\kappa}^2 -2\xi F(\xi)}
\\
&\sim-i\cos\left(\frac{1}{2}\xi\hat{\kappa}^2\right)e^{-\frac{\xi\pi i \hat{\kappa}^2}{2} -2\xi I_{F}(\xi)}
\end{aligned}
\end{equation}
as $\xi\to+\infty$. Hence, $s_{2}=0$ implies $\frac{\xi\hat{\kappa}^2}{2} \sim \left(n-\frac12\right)\pi$.
In view of the identity $\xi\hat{\kappa}^2=\frac{\xi\kappa^2}{2}$,
we have $\xi=\frac{\left(4n-2\right)\pi}{\kappa^2}$.
Noting that $p=2C\xi^{4/5}$ and $H=-\frac{1}{7}\xi^{6/5}$, we get \cref{eq-H-p-Tritronquee} immediately.
\end{proof}

It is worth mentioning that this solution behaves like $-\sqrt{-t/6}$ as $t\to-\infty$ as a special Type (A) solution,
then a combination of \cref{eq-parameter-d-theta} and \cref{eq-H-p-Tritronquee}
can be regard as the connection formula of the \PI's real tritronqu\'{e}e solution between negative and positive infinity.
\ \\
\paragraph{$\bullet$ Case II: $H\to+\infty$}
\ \\

This case is quite different from the case when $H\to-\infty$.
We find that the leading asymptotic behavior of $s_{0}$ remains purely imaginary and $\Im s_{0}<0$ uniformly for all $p$. The corresponding result is stated as follows.
\begin{theorem}\label{Thm-H-positive}
Suppose that $y(t; p, H)$ is the real solution of \cref{PI equation} with a pole at $t=p$ and $H$ being the free parameter in the Laurent series.
Set $H=\frac{\xi^{6/5}}{7}$ and $p=2C(\xi)\xi^{4/5}$.
Then the asymptotic behavior of the Stokes multiplier $s_0$ corresponding to $y(t; p,H)$ is given by
\begin{equation}
s_{0}=-i\cdot e^{-\xi G(\xi)}\left(1+\mathcal{O}(\xi^{-\frac{1}{2}})\right)
\end{equation}
as $\xi\to+\infty$ uniformly for all $C(\xi)\in\mathbb{R}$, where
\begin{equation}\label{eq-def-G(xi)}
\begin{aligned}
G(\xi)=2\int_{\eta_{0}}^{\infty e^{i\theta}}&\left[\left(s^3+C(\xi)s+1\right)^{\frac{1}{2}}-\left(s^{\frac{3}{2}}+\frac{C(\xi)}{2}s^{-\frac{1}{2}}\right)\right]ds\\
&-\left(\frac{4}{5}(\eta_{0})^{\frac{5}{2}}+2C(\xi)(\eta_{0})^{\frac{1}{2}}\right),
\qquad \theta\in \left(-\frac{2\pi}{5},\frac{2\pi}{5}\right).
\end{aligned}
\end{equation}
\end{theorem}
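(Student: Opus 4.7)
The plan is to derive \cref{Thm-H-positive} by a single-turning-point Langer--Olver Airy comparison for the reduced triconfluent Heun equation \cref{Schrodinger-equation-triconfluent-Heun}, which is considerably simpler than the two-turning-point parabolic-cylinder analysis required in \cref{Thm-H-negative}. The essential reason is that in the $H\to+\infty$ regime only $\eta_{0}$ ends up on the Stokes curve relevant to $s_{0}$, while $\eta_{1},\eta_{2}$ stay uniformly away from it. First, I substitute $\lambda=-\xi^{2/5}s$ in \cref{Schrodinger-equation-triconfluent-Heun}; using $28H=4\xi^{6/5}$ and $p=2C(\xi)\xi^{4/5}$, the equation reduces to
\[
\frac{d^{2}Y}{ds^{2}} = -4\xi^{2}\bigl(s^{3}+C(\xi)s+1\bigr)Y,
\]
whose turning points in $s$ are precisely the roots $\eta_{0},\eta_{1},\eta_{2}$ introduced before \cref{lem-def-C0}. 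Under this substitution the positive real $\lambda$-axis is mapped to the negative real $s$-axis, on which $\eta_{0}$ sits; the other two roots either form a conjugate pair off the real axis (when $C(\xi)>-3/2^{2/3}$) or lie on the positive real $s$-axis (when $C(\xi)<-3/2^{2/3}$), but in every case remain uniformly separated from $\eta_{0}$, which is the key geometric fact behind uniformity in $C(\xi)\in\mathbb{R}$.

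Next, I apply the Langer transformation by defining $\zeta=\zeta(s)$ through
\[
\tfrac{2}{3}\zeta^{3/2} = 2i\int_{\eta_{0}}^{s}\sqrt{u^{3}+C(\xi)u+1}\,du,
\]
with the branches indicated before \cref{lem-def-C0} so that $\zeta$ vanishes linearly at $s=\eta_{0}$. Standard Olver-type error analysis then produces the uniform approximation
\[
Y(s) = \Bigl(\frac{d\zeta}{ds}\Bigr)^{-1/2}\Bigl[\,a(\xi)\,\Ai\bigl(\xi^{2/3}\zeta\bigr)+b(\xi)\,\Bi\bigl(\xi^{2/3}\zeta\bigr)\Bigr]\bigl(1+\mathcal{O}(\xi^{-1/2})\bigr),
\]
valid on any progressive path from $\eta_{0}$ extending to infinity inside $\Omega_{0}$ or $\Omega_{1}$ without crossing the Stokes curves from $\eta_{1},\eta_{2}$. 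Matching the large-argument asymptotics of $\Ai$ and $\Bi$ against the canonical form \cref{eq-canonical-solutions-near-pole} of $\hat{\Phi}_{k}(\lambda,p)$ pins down $a(\xi),b(\xi)$ for $k=0,1$; the regularized identity
\[
\lim_{|s|\to\infty}\Bigl[\,2\!\int_{\eta_{0}}^{s}\!\sqrt{u^{3}+C(\xi)u+1}\,du - \tfrac{4}{5}s^{5/2} - 2C(\xi)s^{1/2}\Bigr] = G(\xi)
\]
from \cref{eq-def-G(xi)} is exactly the finite constant that survives in this matching and generates the factor $e^{-\xi G(\xi)}$ in the ratio of the two canonical solutions. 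The Stokes relation $\hat{\Phi}_{1}=\hat{\Phi}_{0}S_{0}$ with $S_{0}=\bigl[\begin{smallmatrix}1&0\\s_{0}&1\end{smallmatrix}\bigr]$ then reduces to the standard Airy connection formula across the relevant Stokes direction, and tracking the $-i$ entries in the canonical prefactor matrix of \cref{eq-canonical-solutions-near-pole} together with the Stokes constant of $\Ai$ delivers $s_{0} = -i\,e^{-\xi G(\xi)}(1+\mathcal{O}(\xi^{-1/2}))$.

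The principal technical obstacle is certifying uniformity in $C(\xi)\in\mathbb{R}$. As $C(\xi)\to+\infty$ the turning point $\eta_{0}\sim-1/C(\xi)\to 0$, which threatens the Olver bounds near the origin; as $C(\xi)\to-3/2^{2/3}$ the roots $\eta_{1},\eta_{2}$ coalesce (though still away from $\eta_{0}$); and as $C(\xi)\to-\infty$ the integration contour has to be deformed so as to remain bounded away from $\eta_{1},\eta_{2}$ on the positive real axis. In each of these regimes one must verify that the Olver error control function $\int|\mathcal{Q}^{-1/4}(\mathcal{Q}^{-1/4})''|\,|du|$ for $\mathcal{Q}(u)=-4(u^{3}+C(\xi)u+1)$ remains uniformly bounded on a suitable progressive path reaching infinity in each of the admissible directions $\theta\in(-2\pi/5,2\pi/5)$ of \cref{eq-def-G(xi)}; this case analysis is the heart of the remaining work, but is tractable because only one turning point dominates the geometry throughout.
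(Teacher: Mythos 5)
Your proposal follows essentially the same route as the paper's proof of \cref{Thm-H-positive}: after the rescaling (yours via $\lambda=-\xi^{2/5}s$, the paper's via $\lambda=\xi^{2/5}\eta$ with the cubic $\eta^{3}+C(\xi)\eta-1$, which differ only by a sign convention), one performs a single-turning-point Langer/Olver--Airy approximation at $\eta_{0}$ on the two adjacent Stokes lines, identifies the regularized phase integral with $G(\xi)$ exactly as in \cref{lemma-omega-eta-infty-relation}, matches against \cref{eq-canonical-solutions-near-pole} in $\Omega_{0}$ and $\Omega_{1}$, and reads off $s_{0}$ from $\hat{\Phi}_{1}=\hat{\Phi}_{0}S_{0}$. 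The technical points you flag (uniformity of the error bounds in $C(\xi)\in\mathbb{R}$ and the Stokes-geometry case analysis) are precisely the ones the paper handles via \cref{Figure-stokes-geometry-III} and \cref{lem-uniform-airy}.
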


By the classification criterion in~\cref{eq-condition-stokes-multipliers},
we have the following corollary of \cref{Thm-H-positive}.
\begin{corollary}
There exists an $M>0$ such that for all $p\in\mathbb{R}$ and $H>M$ the \PI~solutions $y(t; p, H)$ belong to Type (C).
\end{corollary}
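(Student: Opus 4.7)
The plan is to reduce the corollary to a sign check on the leading-order asymptotic of $s_0$ supplied by \cref{Thm-H-positive}, combined with the real-solution classification \cref{eq-condition-stokes-multipliers}. I parametrize by $H=\xi^{6/5}/7$ so that $H\to+\infty$ is equivalent to $\xi\to+\infty$, and set $C(\xi)=p/(2\xi^{4/5})$, which lies in $\mathbb{R}$ for every $p\in\mathbb{R}$. \cref{Thm-H-positive} then gives
\[
s_0 \;=\; -i\, e^{-\xi G(\xi)}\bigl(1+\mathcal{O}(\xi^{-1/2})\bigr)
\]
uniformly for $C(\xi)\in\mathbb{R}$ as $\xi\to+\infty$.

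For real $p$ and $H$, the conjugation symmetry \cref{eq-sk-s-k-relation} at $k=0$ forces $s_0=-\overline{s_0}$, so $s_0\in i\mathbb{R}$. It is therefore enough to show that the leading term $-i\,e^{-\xi G(\xi)}$ is purely imaginary with negative imaginary part. Equivalently, once one establishes $G(\xi)\in\mathbb{R}$ whenever $C(\xi)\in\mathbb{R}$, then $e^{-\xi G(\xi)}>0$ and
\[
\Im s_0 \;=\; -e^{-\xi G(\xi)}\bigl(1+\mathcal{O}(\xi^{-1/2})\bigr) \;<\; 0
\]
for all sufficiently large $\xi$; the threshold is independent of $C(\xi)$ thanks to the uniformity asserted in \cref{Thm-H-positive}.

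The reality of $G(\xi)$ is the crux. From \cref{eq-def-G(xi)}, $\eta_0$ is the unique negative real root of $\eta^3+C\eta+1=0$, and the branch choice $\arg\eta_0=\pi$ gives $\eta_0^{1/2}=i|\eta_0|^{1/2}$ and $\eta_0^{5/2}=i|\eta_0|^{5/2}$, so the boundary term $\tfrac{4}{5}\eta_0^{5/2}+2C\eta_0^{1/2}$ is purely imaginary. I would exploit the symmetry of the integrand under $s\mapsto\bar{s}$: the other two roots $\eta_1,\eta_2$ are either both real or a complex-conjugate pair, so integrating from $\eta_0$ to $\infty e^{i\theta}$ and to $\infty e^{-i\theta}$ yields two values that are complex conjugates of each other. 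Taking a suitable symmetric deformation within the allowed range $\theta\in(-2\pi/5,2\pi/5)$, the purely imaginary part of the regularized integral matches $\tfrac{4}{5}\eta_0^{5/2}+2C\eta_0^{1/2}$, so the subtraction in \cref{eq-def-G(xi)} leaves $G(\xi)\in\mathbb{R}$.

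Choosing $M=\xi_0^{6/5}/7$ for the threshold $\xi_0$ produced above then completes the argument: for every $p\in\mathbb{R}$ and every $H>M$, one has $\Im s_0=1+s_2 s_3<0$, so by \cref{eq-condition-stokes-multipliers} the solution $y(t;p,H)$ is of Type~(C). The main obstacle is the third paragraph---making the branch-cut bookkeeping rigorous uniformly in $C(\xi)$ across the transition at $C(\xi)=-3/2^{2/3}$, where $\eta_1,\eta_2$ switch between a complex-conjugate pair and a positive real pair, so that the symmetry argument delivering $G(\xi)\in\mathbb{R}$ remains valid without interruption.
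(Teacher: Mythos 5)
Your overall reduction is exactly the paper's: the corollary is presented there as an immediate consequence of \cref{Thm-H-positive} together with the classification criterion \cref{eq-condition-stokes-multipliers}, and your first, second and final paragraphs reproduce that argument (with the useful extra observation that \cref{eq-sk-s-k-relation} forces $s_0$ to be purely imaginary for real $(p,H)$, so only the sign of $\Im s_0$ is at issue).

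Where you go astray is the third paragraph, which you yourself call the crux. In Case II the relevant cubic is $\eta^3+C(\xi)\eta-1$ (see \cref{eq-shrodinger-H-positive}; the ``$+1$'' appearing in \cref{eq-def-G(xi)} is carried over from Case I), and the paper's convention in this case is that $\eta_0$ is the turning point on the \emph{right} half-plane, i.e.\ the unique \emph{positive} real root --- not the negative real root you describe. Consequently $\eta_0^{1/2}$ and $\eta_0^{5/2}$ are positive reals, the boundary term in $G(\xi)$ is real rather than purely imaginary, and taking $\theta=0$ the integrand is real for $s>\eta_0$ since the cubic is positive there; so $G(\xi)\in\mathbb{R}$ by inspection, with no conjugate-symmetry cancellation required. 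Likewise, the transition at $C(\xi)=-3/2^{2/3}$ concerns $\eta_1,\eta_2$, which lie on the left half-plane and never enter the definition of $G(\xi)$, so the ``main obstacle'' you flag does not arise. With that correction your proof closes and coincides with the paper's.
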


We can only show the existence of $M$ in the above corollary
as we only derive the leading asymptotic behavior of $s_{0}$ in \cref{Thm-H-positive}.
Nevertheless, the numerical simulations shows that the value of $M$ is { close} to $-0.036\,516\,259$.

\section{Uniform asymptotics and proof of the Theorems}
\label{sec:proof-lemma}

In this section we shall prove \cref{Thm-H-negative} and \cref{Thm-H-positive} by the method of \emph{uniform asymptotics}~\cite{APC}.
The argument consists of two major steps.
The first step is to transform the Lax pair equation \cref{eq-fold-Lax-pair} into a second-order Schr\"{o}dinger equation
and to approximate the solutions of this equation with certain well-known special functions.
Indeed, as stated in the previous section, under proper transformations, we obtain the reduced triconfluent Heun equation \cref{Schrodinger-equation-triconfluent-Heun}.
One can regard \cref{Schrodinger-equation-triconfluent-Heun} as either a scalar or a $1\times 2$ vector-form equation.
When $H$ is large negative or positive, the solutions of this equation can be approximated by certain special functions.
Hence, in the second step, we use the known Stokes phenomena of these special functions to calculate the Stokes multipliers of $Y$,
and then calculate those of $\hat{\Phi}$.
A notable difference between the current work from~\cite{LongLLZ} is
that the asymptotic formulas of the Stokes multipliers here
are valid when one parameter is large and the other is arbitrary,
instead of one large and the other fixed in~\cite{LongLLZ}.

\subsection{Case I: \texorpdfstring{$H\to-\infty$}{H to -Infinity}}
Make the scaling $\lambda=\xi^{2/5}\eta$, $H=-\frac{\xi^{6/5}}{7}$ and $p=2C(\xi) \xi^{4/5}$
as $\xi\rightarrow+\infty$ with $C(\xi)\in(-\infty,\delta]$ for some small $\delta>0$.
Then equation \cref{Schrodinger-equation-triconfluent-Heun} becomes
\begin{equation}\label{eq-shrodinger-scaling}
\begin{split}
\frac{d^{2}Y}{d\eta^{2}}&=\xi^{2}\left[4(\eta^3+C(\xi)\eta+1)\right]Y:=\xi^{2}F(\eta,\xi)Y.
\end{split}
\end{equation}
There are three simple turning points, say $\eta_{j}$, $j=0,1,2$, of the above equation.
They are the zeros of $F(\eta,\xi)$ depending on the value of $C(\xi)$
and may be complex-valued.
Assume that $\eta_{0}$ is on the left half-plane
and $\eta_{1}$, $\eta_{2}$ are on the right half-plane.

As usual, the Stokes curves are those on the $\eta$-plane defined by $\Re\int_{\eta_{j}}^{\eta}F(s,\xi)^{\frac{1}{2}}ds=0,\, j=0,1,2$.
By a careful analysis, the limiting state of the Stokes geometry of the quadratic form $F(\eta,\xi)d\eta^2$ as $\xi\rightarrow+\infty$
depends on the locations of $\eta_{j}$, and thus depends on $C(\xi)$.
A complete classification of the Stokes geometry is given in~\cite[Theorem 7]{Masoero-2010}.
\ \\
\paragraph{$\bullet$ When $C(\xi)\in(-\infty,\delta_{1}]$ with $\delta_{1}\in(0,C_{0})$}
\ \\

There are three cases to be considered.
(a) When $-3/2^{2/3}<C(\xi)\leq \delta_{1}$, $\eta_{1}$ and $\eta_{2}$ are separated and form a conjugate pair on the right half-plane.
In this case we further assume that $\Im \eta_{1}>0>\Im\eta_{2}$.
(b) When $C(\xi)\sim -3/2^{2/3}$, $\eta_{1}$ and $\eta_{2}$ are coalescing to $\eta=2^{-1/3}$.
(c) When $C(\xi)<-3/2^{2/3}$, the two turning points $\eta_{1}$ and $\eta_{2}$ are separated and both real.
Without loss of generality, we assume that $\eta_{1}<\eta_{2}$ in this case.
The Stokes geometry of the quadratic form $F(\eta,\xi)d\eta^2$ as $\xi\rightarrow+\infty$ for the above three cases
are described in \cref{Figure-stokes-geometry}.

\begin{figure}[h]
\centering
\subfigure{
\includegraphics[width=0.3\textwidth]{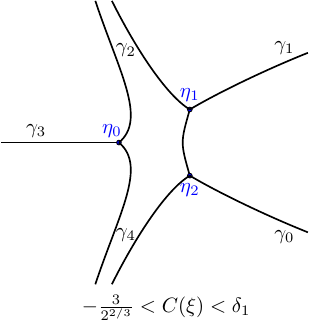}}
\subfigure{
\includegraphics[width=0.3\textwidth]{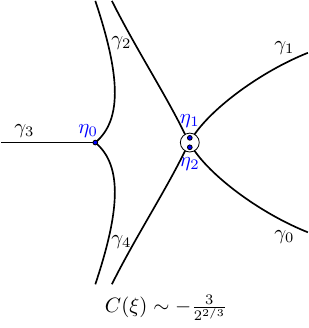}}
\subfigure{
\includegraphics[width=0.3\textwidth]{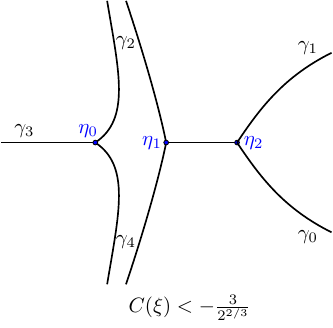}}
\caption{The Stokes geometry of $F(\eta,\xi)d\eta^{2}$ when $C(\xi)\in(-\infty,\delta_{1}]$.}
\label{Figure-stokes-geometry}
\end{figure}

An often used approach, when the two turning points $\eta_{1},\eta_{2}$ of \cref{eq-shrodinger-scaling} are separated,
for deriving asymptotic solutions,
is to use Airy functions to obtain uniform approximations near each turning point,
and then apply a matching technique on the Stokes line joining the two turning points.
When two turning points are coalescing, parabolic cylinder functions are involved \cite{Dunster}.
Hence, one may carry out the analysis case by case for the above three Stokes geometry.
However, it can be done in a unified way regardless of the location of the two turning points.
Although the Stokes geometry changes as $C(\xi)$ varies,
we find that it is possible to approximate the solutions of \cref{eq-shrodinger-scaling},
uniformly for $\eta$ in a larger region that contains both $\eta_{1}$ and $\eta_{2}$.
To this end, we define a mapping $\zeta(\eta)$ by
\begin{equation}\label{zeta-define}
\int_{-\kappa}^{\zeta}(s^2-\kappa^2)^{\frac{1}{2}}ds
=\int_{\eta_{1}}^{\eta}F(s,\xi)^{\frac{1}{2}}ds,
\end{equation}
which is conformal in a neighborhood of $\eta_{1}$.
By the definition of $\kappa$ in \cref{eq-def-kappa-hat-kappa}, we have
\begin{equation}\label{kappa-determin}
\int_{-\kappa}^{\kappa}(s^2-\kappa^2)^{\frac{1}{2}}ds
=\frac{\kappa^2\pi i}{2}=\int_{\eta_{1}}^{\eta_{2}}F(s,\xi)^{\frac{1}{2}}ds.
\end{equation}
Hence, the conformality can be extended to the Stokes curves emanating from $\eta_{1}$ and $\eta_{2}$.
{ Set
\[
p=\frac{d\eta}{d\zeta}=\left(\frac{\zeta^2-\kappa^2}{F(\eta,\xi)}\right)^{\frac{1}{2}},
\quad \varphi(\zeta)=p^{-\frac{1}{2}}Y.
\]
Then we have
\begin{equation}\label{eq-varphi-equation}
\frac{d^2\varphi}{d\zeta^2}=\xi^2(\zeta^2-\kappa^2)\varphi-\frac{1}{2}\left[\frac{p''}{p}-\frac{3}{2}\frac{(p')^2}{p^2}\right]\varphi.
\end{equation}
From \cite[Eqs.~(12.10.35) and (12.10.37)]{NIST-handbook} and \cite[Eqs.~(8.11) and (9.1)]{Olver-1959},
we find that the parabolic cylinder functions $U(\nu,\sqrt{2\xi}\zeta)$ and $V(\nu,\sqrt{2\xi}\zeta)$ are not bounded by a constant as $\xi\to+\infty$,
but instead bounded by two functions of $\xi$ respectively.
Indeed, we have
\begin{equation}
\left|U(\nu,\sqrt{2\xi}\zeta)\right|\leq g(\xi), \quad \left|V(\nu,\sqrt{2\xi}\zeta)\right|\leq h(\xi)
\end{equation}
for all $\eta$ lies on the Stokes lines, where $h(\xi)$ and $g(\xi)$ possess the following asymptotics as $\xi\to+\infty$
\begin{equation}
g(\xi)^2\sim\left\{\begin{aligned}
g_{1}\xi^{-\frac{1}{2}}\Gamma(\frac{1}{2}-\nu) ,\quad&\nu<0,\\
\frac{g_{2}\xi^{-\frac{1}{2}}}{\Gamma(\frac{1}{2}+\nu)},\quad\qquad&\nu>0,
\end{aligned}\right.
\qquad
h(\xi)^2\sim\left\{\begin{aligned}
 \frac{h_{1}\xi^{-\frac{1}{2}}}{\Gamma(\frac{1}{2}-\nu)} ,\quad\qquad &\nu<0,\\
h_{2}\xi^{-\frac{1}{2}}\Gamma(\frac{1}{2}+\nu) ,~&\nu>0
\end{aligned}\right.\end{equation}
with $g_{1}, g_{2}, h_{1}, h_{2}$ being positive constants.
Here, $\nu=\frac{-\xi\kappa^2}{2}$ is the order of the parabolic cylinder functions.
Set
\begin{equation}\label{eq-def-varphi1-varphi2}
\varphi_{1}(\zeta)=\frac{1}{g(\xi)}U(\nu,\sqrt{2\xi}\zeta) \quad \text{and}\quad \varphi_{2}(\zeta)=\frac{1}{h(\xi)}V(\nu,\sqrt{2\xi}\zeta)
\end{equation}
as two linearly independent solutions of $\frac{d^2\varphi}{d\zeta^2}=\xi^2(\zeta^2-\kappa^2)\varphi$.
They are normalized in the sense that they are of $\mathcal{O}(1)$ as $\xi\to+\infty$ uniformly for all $\eta$ on the Stokes lines emanating from $\eta_{1}$ and $\eta_{2}$, {\it i.e.} for all $\zeta$ on the Stokes lines of the parabolic cylinder functions.
Using a similar argument as that of \cite[Theorem 1]{APC}, we have the following lemma.

\begin{lemma}\label{lem-uniform-case-I}
Let $Y$ be any solution of \cref{eq-shrodinger-scaling}, and $\varphi_{1}(\zeta),\varphi_{2}(\zeta)$ be defined as in \eqref{eq-def-varphi1-varphi2}.
Then there are two constants $C_{1}$ and $C_{2}$ such that
\begin{equation}\label{eq-Y-U-V-case-I}
Y=\left(\frac{\zeta^2-\kappa^2}{F(\eta,\xi)}\right)^{\frac{1}{4}}
\left\{\left[C_{1}+r_{1}(\eta,\xi)\right]\varphi_{1}(\zeta)
+\left[C_{2}+r_{2}(\eta,\xi)\right]\varphi_{2}(\zeta)\right\},
\end{equation}
where
\begin{equation}\label{approx-bound-r1}
r_{1}(\eta,\xi), r_{2}(\eta,\xi)=\mathcal{O}\left(\frac{|C_{1}|+|C_{2}|}{\xi}\right)
\end{equation}
as $\xi\to+\infty$, uniformly for $\eta$ on any Stokes lines emanating from $\eta_{1}$ and $\eta_{2}$.
\end{lemma}

\begin{proof}
The proof of \cref{lem-uniform-case-I} is similar to the argument in \cite[pp. 253-255]{APC}. From \eqref{eq-varphi-equation}, we have
\begin{equation}\label{eq-integration-equation-phi}
\begin{split}
\varphi(\zeta)=&C_{1}\varphi_{1}(\zeta)+C_{2}\varphi_{2}(\zeta)\\
&+\int_{-\kappa}^{\zeta}\frac{\varphi_{1}(\zeta)\varphi_{2}(s)-\varphi_{2}(\zeta)\varphi_{1}(s)}{W(s)}\left[\frac{p''}{p}-\frac{3}{2}\frac{(p')^2}{p^2}\right]\varphi(s)ds,
\end{split}
\end{equation}
where
$$\frac{1}{W(s)}=\frac{1}{W(\varphi_{1}(s),\varphi_{2}(s))}=\mathcal{O}(\xi^{-1}).$$
Furthermore, similar as in the proof of \cite[Theorem 1]{APC}, we have
$$\left[\frac{p''}{p}-\frac{3}{2}\frac{(p')^2}{p^2}\right]=\mathcal{O}(\zeta^{-2}),\qquad \text{as}~~\zeta\to\infty.$$ Hence, the last term of \eqref{eq-integration-equation-phi} is integrable. Applying the iteration method used in \cite{APC}, we see that $\varphi(\zeta)$ is bounded.
This fact, together with \cref{eq-integration-equation-phi} and $Y=p^{\frac{1}{2}}\varphi(\zeta)$, leads to the desired result in \cref{lem-uniform-case-I}.
\end{proof}
}

{ \begin{remark}\label{remark-r1-r2}
In the above argument, we find that the last term of \eqref{eq-integration-equation-phi} is integrable when $\eta\to\infty$ with $\arg\eta\sim\frac{(2k-1)\pi}{5}, k=-1,0,1,2$. Then, for any fixed $\xi$, the limits
$\lim\limits_{\eta\to\infty}r_{1}(\eta,\xi) $
and
$\lim\limits_{\eta\to\infty}r_{2}(\eta,\xi)$
both exist.
Moreover, the limit values $\lim\limits_{\eta\to\infty}r_{1}(\eta,\xi)$ and
$\lim\limits_{\eta\to\infty}r_{2}(\eta,\xi)$ with $\arg\eta\sim\frac{(2k-1)\pi}{5}, k=-1,0,1,2$
are different for different $k$. For instance, we know that
\[
r_{1}(\infty e^{\frac{\pi i}{5}},\xi)-r_{1}(\infty e^{-\frac{\pi i}{5}},\xi)
=\mathcal{O}\left(\xi^{-1}\right)
\]
as $\xi\to+\infty$ when $\nu\to-\infty$,
which may not be identically zero.
Here and in what follows, we set $r_{j,k}(\xi)=\lim\limits_{\eta\to\infty}r_{j}(\eta,\xi), j=1,2$, with $\arg\eta\sim \frac{(2k-1)\pi}{5}$, $k=-1,0,1,2$.
\end{remark}
}

The following lemma gives the asymptotics of $\zeta(\eta)$ as $\xi,|\eta|\to+\infty$,
which plays a crucial role in calculating the Stokes multipliers.
The proof of this lemma is left in Appendix B.
\begin{lemma}\label{lemma-zeta-eta-infty-relation}
As $|\eta|\rightarrow+\infty$ with $|\eta|\gg\xi^3$, the asymptotic behavior of $\zeta(\eta)$ is given by
\begin{equation}\label{eq-relation-zeta-eta}
\frac{1}{2}\zeta^2-\frac{\kappa^2}{2}\log{\zeta}=\frac{4}{5}\eta^{\frac{5}{2}}+2C(\xi)\eta^{\frac{1}{2}}+E(\xi)+\mathcal{O}(\eta^{-\frac{1}{2}}),
\end{equation}
where $E(\xi)$ is defined in \cref{Thm-H-negative}.
\end{lemma}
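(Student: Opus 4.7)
The plan is to expand both sides of \cref{zeta-define} asymptotically for large argument and then match them; the content of the lemma is precisely the identification of the constant term in this matching with $E(\xi)$. The dominant balance in \cref{zeta-define} forces $\zeta^{2}\sim\tfrac{8}{5}\eta^{5/2}$, so $|\zeta|\to\infty$ together with $|\eta|\to\infty$, and each side admits a closed-form large-argument expansion that one can read off explicitly.

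First I would evaluate the LHS using the explicit antiderivative
\[
\int(s^{2}-\kappa^{2})^{1/2}\,ds
=\tfrac{s}{2}\sqrt{s^{2}-\kappa^{2}}-\tfrac{\kappa^{2}}{2}\log\!\bigl(s+\sqrt{s^{2}-\kappa^{2}}\bigr),
\]
with the branches of $\sqrt{\cdot}$ and $\log$ determined by the conformality of $\zeta(\eta)$ near $\eta_{1}$ and continued to the Stokes rays. Applying $\sqrt{\zeta^{2}-\kappa^{2}}=\zeta-\tfrac{\kappa^{2}}{2\zeta}+O(\zeta^{-3})$ and $\log\bigl(\zeta+\sqrt{\zeta^{2}-\kappa^{2}}\bigr)=\log(2\zeta)+O(\zeta^{-2})$, the LHS becomes
\[
\tfrac{\zeta^{2}}{2}-\tfrac{\kappa^{2}}{2}\log\zeta-\tfrac{\kappa^{2}}{4}-\tfrac{\kappa^{2}}{2}\log 2+\tfrac{\kappa^{2}}{4}\log\kappa^{2}+A+O(\zeta^{-2}),
\]
where $A$ is the boundary contribution from the lower limit, computed from $\log(-\kappa)$ on the appropriate branch; I expect $A=\tfrac{\kappa^{2}\pi i}{2}$ modulo branch conventions.

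For the RHS I would split $\int_{\eta_{1}}^{\eta}=\int_{\eta_{1}}^{\eta_{2}}+\int_{\eta_{2}}^{\eta}$. By the definition of $\kappa^{2}$ in \cref{eq-def-kappa-hat-kappa}, the first piece equals $\tfrac{\kappa^{2}\pi i}{2}$. For the second, since $F(s,\xi)^{1/2}=2s^{3/2}+Cs^{-1/2}+O(s^{-3/2})$ as $|s|\to\infty$, I would subtract and add these two dominant terms. The subtracted integrand is absolutely integrable to infinity along the ray of $I_{E}(\xi)$, and together with the $\eta_{2}$-boundary terms it reassembles exactly into $I_{E}(\xi)$ as defined in \cref{eq-def-I-E(xi)}; the added polynomial part integrates to $\tfrac{4}{5}\eta^{5/2}+2C\eta^{1/2}$ with a cancelling contribution at $\eta_{2}$; and the remainder from the $O(s^{-3/2})$ tail yields $O(\eta^{-1/2})$. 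Equating the two expansions, the $A$ and $\tfrac{\kappa^{2}\pi i}{2}$ contributions cancel, and the surviving constants combine, via the definition of $E(\xi)$ in \cref{Thm-H-negative}, to give exactly \cref{eq-relation-zeta-eta}.

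The main obstacle will be the careful management of branches. The sign of $\kappa^{2}$ (and whether it is real positive, real negative, or on an imaginary ray) varies with $C(\xi)$ according to \cref{lem-def-C0}, so the quantity $\log(-\kappa)$ must be computed case by case in line with the conventions stated after \cref{eq-def-kappa-hat-kappa} and the three Stokes configurations in \cref{Figure-stokes-geometry}. The bookkeeping must confirm that in every case the contributions from the LHS lower endpoint and from $\int_{\eta_{1}}^{\eta_{2}}$ genuinely cancel, so that no spurious $i\pi$ terms survive. A secondary issue is uniformity of the $O(\eta^{-1/2})$ remainder in $C(\xi)$; the hypothesis $|\eta|\gg\xi^{3}$ provides a comfortable margin ensuring that $|\eta|$ dominates the scale of the turning points (which may grow with $|C(\xi)|$) uniformly across the allowed ranges of $C(\xi)$, so that both the large-$s$ expansion of $F(s,\xi)^{1/2}$ and the large-$\zeta$ expansion on the LHS remain valid with controlled constants.
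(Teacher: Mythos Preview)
Your proposal is correct and follows essentially the same route as the paper: expand each side of \cref{zeta-define} for large argument, subtract the polynomial growth on the right to isolate $I_{E}(\xi)$, and match constants to recover $E(\xi)$. The one organizational difference is that the paper splits the left-hand integral as $\int_{-\kappa}^{\kappa}+\int_{\kappa}^{\zeta}$ and invokes \cref{kappa-determin} for the first piece, so the $\tfrac{\kappa^{2}\pi i}{2}$ on the left cancels $\int_{\eta_{1}}^{\eta_{2}}F^{1/2}\,ds$ on the right \emph{by definition}; this sidesteps entirely the evaluation of $\log(-\kappa)$ and the case-by-case branch bookkeeping you flag as the main obstacle, and the antiderivative then only needs to be evaluated at $s=\kappa$, where it is unambiguous.
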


Now we turn to the proof of \cref{Thm-H-negative}.

\textbf{Proof of \cref{Thm-H-negative} (part (i)):}
According to \cite{APC}, in order to calculate $s_{0}$,
we should start with the uniform asymptotics of $Y$ on the two Stokes lines
tending to infinity with $\arg\eta\sim\pm\frac{\pi}{5}$, i.e., $\arg{\zeta}\sim\pm\frac{\pi}{4}$.

When $|\eta|\to+\infty$ with $\arg{\eta}\sim\frac{\pi}{5}$, from \cite[Eqs.~(12.9.1) and (12.9.4)]{NIST-handbook}, we know that when $\arg{z}\sim\frac{\pi}{4}$,
\begin{equation}\label{eq-behavior-parabolic-pi/5}
\left\{\begin{aligned}
U(\nu,z)&\sim e^{-\frac{1}{4}z^2}z^{-\nu-\frac{1}{2}}, \\
V(\nu,z)&\sim \sqrt{\frac{2}{\pi}}e^{\frac{1}{4}z^2}z^{\nu-\frac{1}{2}}+\frac{i}{\Gamma\left(\frac{1}{2}-\nu\right)}e^{-\frac{1}{4}z^2}z^{-\nu-\frac{1}{2}}.
\end{aligned}\right.
\end{equation}
Hence, by substituting \cref{eq-relation-zeta-eta} into \cref{eq-behavior-parabolic-pi/5} and noting that $\lambda=\xi^{\frac{2}{5}}\eta$, we have
\begin{equation}\label{eq-U-V-to-Phi-pi/5}
\left\{\begin{aligned}
\left(\frac{\zeta^2-\kappa^2}{F(\eta,\xi)}\right)^{\frac{1}{4}}{ \varphi_{1}(\zeta)}&\sim c_{1}\frac{-i}{\sqrt{2}}\lambda^{-\frac{3}{4}}e^{-\frac{4}{5}\lambda^{\frac{5}{2}}-p\lambda^{\frac{1}{2}}},\\
\left(\frac{\zeta^2-\kappa^2}{F(\eta,\xi)}\right)^{\frac{1}{4}}{ \varphi_{2}(\zeta)}&\sim c_{2}\frac{-i}{\sqrt{2}}\lambda^{-\frac{3}{4}}e^{-\frac{4}{5}\lambda^{\frac{5}{2}}-p\lambda^{\frac{1}{2}}}+c_{3}\frac{-i}{\sqrt{2}}\lambda^{-\frac{1}{4}}e^{\frac{4}{5}\lambda^{\frac{5}{2}}+p\lambda^{\frac{1}{2}}},
\end{aligned}\right.
\end{equation}
where
{
\begin{equation}
\begin{split}
c_{1}&=\frac{i}{g(\xi)}2^{-\frac{\nu}{2}-\frac{1}{4}}\xi^{\frac{1}{20}-\frac{\nu}{2}}e^{-\xi E(\xi)},\\
c_{2}&=\frac{1}{h(\xi)}\frac{i}{\Gamma\left(\frac{1}{2}-\nu\right)}2^{-\frac{\nu}{2}-\frac{1}{4}}\xi^{\frac{1}{20}-\frac{\nu}{2}}e^{-\xi E(\xi)},\\
c_{3}&=\frac{i}{h(\xi)}\sqrt{\frac{2}{\pi}}2^{\frac{\nu}{2}-\frac{1}{4}}\xi^{\frac{1}{20}+\frac{\nu}{2}}e^{\xi E(\xi)}.
\end{split}
\end{equation}}
It then follows from \cref{lemma-zeta-eta-infty-relation}, \cref{remark-r1-r2} and \cref{eq-canonical-solutions-near-pole} that
\begin{equation}\label{eq-uniform-asymptotic-phik}
(\hat\Phi_{k})_{11}\sim\frac{1}{\sqrt{2}}\lambda^{-\frac{1}{4}}e^{\frac{4}{5}\lambda^{\frac{5}{2}}+p\lambda^{\frac{1}{2}}}\quad\text{and}\quad (\hat\Phi_{k})_{12}\sim\frac{-1}{\sqrt{2}}\lambda^{-\frac{1}{4}}e^{-\frac{4}{5}\lambda^{\frac{5}{2}}-p\lambda^{\frac{1}{2}}},\quad k\in\mathbb{Z},
\end{equation}
as $\lambda\rightarrow\infty$,
and we conclude that
\begin{equation}\label{eq-Y-PHI11-PHI12-pi/5}
\begin{aligned}
Y\sim \left[(C_{1}+r_{1,1}(\xi))c_{1}+(C_{2}+r_{2,1}(\xi))c_{2}\right](\hat{\Phi}_{1})_{12}+(C_{2}+r_{2,1}(\xi))c_{3}(\hat{\Phi}_{1})_{11}.
\end{aligned}
\end{equation}

When $|\eta|\to+\infty$ with $\arg{\eta}\sim-\frac{\pi}{5}$, $\arg{z}\sim-\frac{\pi}{4}$,
the asymptotic behaviors of $U(\nu,z)$ and $V(\nu,z)$ are given by
\begin{equation}\label{eq-behavior-parabolic--pi/5}
\left\{\begin{aligned}
U(\nu,z)&\sim e^{-\frac{1}{4}z^2}z^{-\nu-\frac{1}{2}}, \\
V(\nu,z)&\sim \sqrt{\frac{2}{\pi}}e^{\frac{1}{4}z^2}z^{\nu-\frac{1}{2}}-\frac{i}{\Gamma\left(\frac{1}{2}-\nu\right)}e^{-\frac{1}{4}z^2}z^{-\nu-\frac{1}{2}}.
\end{aligned}\right.
\end{equation}
Hence, we have
\begin{equation}\label{eq-U-V-to-Phi--pi/5}
\left\{\begin{aligned}
\left(\frac{\zeta^2-\kappa^2}{F(\eta,\xi)}\right)^{\frac{1}{4}}{ \varphi_{1}(\zeta)}&\sim c_{1}\frac{-i}{\sqrt{2}}\lambda^{-\frac{3}{4}}e^{-\frac{4}{5}\lambda^{\frac{5}{2}}-p\lambda^{\frac{1}{2}}},\\
\left(\frac{\zeta^2-\kappa^2}{F(\eta,\xi)}\right)^{\frac{1}{4}}{ \varphi_{2}(\zeta)}&\sim -c_{2}\frac{-i}{\sqrt{2}}\lambda^{-\frac{3}{4}}e^{-\frac{4}{5}\lambda^{\frac{5}{2}}-p\lambda^{\frac{1}{2}}}+c_{3}\frac{-i}{\sqrt{2}}\lambda^{-\frac{3}{4}}e^{\frac{4}{5}\lambda^{\frac{5}{2}}+p\lambda^{\frac{1}{2}}}.
\end{aligned}\right.
\end{equation}
Substituting these approximations into \cref{eq-Y-U-V-case-I}
and using \cref{eq-uniform-asymptotic-phik} again, we have
\begin{equation}\label{eq-Y-PHI11-PHI12--pi/5}
Y\sim\left[(C_{1}+r_{1,0}(\xi))c_{1}-(C_{2}+r_{2,0}(\xi))c_{2}\right](\hat{\Phi}_{0})_{12}+(C_{2}+r_{2,0}(\xi))c_{3}(\hat{\Phi}_{0})_{11}.
\end{equation}
A combination of \cref{eq-Y-PHI11-PHI12-pi/5}, \cref{eq-Y-PHI11-PHI12--pi/5} and $\hat\Phi_{1}(\lambda)=\hat\Phi_{0}(\lambda)S_{0}$ yields $r_{2,0}(\xi)=r_{2,1}(\xi)$ and
\begin{equation}
s_{0}=\frac{-(2C_{2}+r_{2,0}(\xi)+r_{2,1}(\xi))c_{2}+(r_{1,0}(\xi)-r_{1,1}(\xi))c_{1}}{(C_{2}+r_{2,1}(\xi))c_{3}}.
\end{equation}
In view of the approximation of $r_{j}(\xi,\eta),j=1,2$, in \cref{approx-bound-r1}, we have
\begin{equation}
\begin{aligned}
s_{0}&=-\frac{2c_{2}}{c_{3}}+\frac{(r_{1,0}(\xi)-r_{1,1}(\xi))}{(C_{2}+r_{2,1}(\xi))}\frac{c_{1}}{c_{3}}\\
&=-i\sqrt{\frac{2}{\pi}}\left[\frac{1}{\Gamma\left(\frac{1}{2}-\nu\right)}+R_{0}(\xi)\right]2^{-\nu}\xi^{-\nu}e^{-2\xi E(\xi)}
\end{aligned}
\end{equation}
as $\xi\to+\infty$, and the estimate of the error bound $R_{0}(\xi)$ is given in~\cref{approx-bound-R0}.

When $\eta\to\infty$ with $\arg{\eta}\sim\frac{3\pi}{5}$,
it follows from \cref{eq-relation-zeta-eta} that $\arg{\zeta}\sim\frac{3\pi}{4}$.
According to \cite[Eq.~(12.9.3)]{NIST-handbook}, we know that
\begin{equation}
U(\nu,z)\sim e^{-\frac{1}{4}z^2}z^{-\nu-\frac{1}{2}}+\frac{i\sqrt{2\pi}}{\Gamma\left(\frac{1}{2}+\nu\right)}e^{-\nu\pi i} e^{\frac{1}{4}z^2}z^{\nu-\frac{1}{2}}, \quad \arg{z}\sim\frac{3\pi}{4}.
\end{equation}
From \cite[Eqs.~(12.2.20) and (12.9.3)]{NIST-handbook}, we also have
\begin{equation}
\begin{aligned}
V(\nu,z)=&\frac{i}{\Gamma\left(\frac{1}{2}-\nu\right)}U(\nu,z)+\sqrt{\frac{2}{\pi}}e^{\pi i(\frac{\nu}{2}-\frac{1}{4})}U(-\nu,-iz)\\
\sim& \frac{i}{\Gamma\left(\frac{1}{2}-\nu\right)}e^{-\frac{1}{4}z^2}z^{-\nu-\frac{1}{2}}+\left[\frac{-\sqrt{2\pi}e^{-\nu\pi i}}{\Gamma\left(\frac{1}{2}-\nu\right)\Gamma\left(\frac{1}{2}+\nu\right)}+\sqrt{\frac{2}{\pi}}\right]e^{\frac{1}{4}z^2}z^{\nu-\frac{1}{2}}\\
\sim& \frac{i}{\Gamma\left(\frac{1}{2}-\nu\right)}e^{-\frac{1}{4}z^2}z^{-\nu-\frac{1}{2}}+i\sqrt{\frac{2}{\pi}}\sin{(\nu\pi)}e^{-\nu\pi i}e^{\frac{1}{4}z^2}z^{\nu-\frac{1}{2}}.
\end{aligned}
\end{equation}
Then, replacing $z$ by $\sqrt{2\xi}\zeta$ in the last two equations and using \cref{eq-relation-zeta-eta} yield
\begin{equation}\label{eq-U-V-to-Phi-3pi/5}
\left\{\begin{aligned}
\left(\frac{\zeta^2-\kappa^2}{F(\eta,\xi)}\right)^{\frac{1}{4}}{ \varphi_{1}(\zeta)}&\sim d_{1}\frac{-i}{\sqrt{2}}\lambda^{-\frac{3}{4}}e^{-\frac{4}{5}\lambda^{\frac{5}{2}}-p\lambda^{\frac{1}{2}}}+d_{2}\frac{-i}{\sqrt{2}}\lambda^{-\frac{3}{4}}e^{\frac{4}{5}\lambda^{\frac{5}{2}}+p\lambda^{\frac{1}{2}}},\\
\left(\frac{\zeta^2-\kappa^2}{F(\eta,\xi)}\right)^{\frac{1}{4}}{ \varphi_{2}(\zeta)}&\sim d_{3}\frac{-i}{\sqrt{2}}\lambda^{-\frac{3}{4}}e^{-\frac{4}{5}\lambda^{\frac{5}{2}}-p\lambda^{\frac{1}{2}}}+d_{4}\frac{-i}{\sqrt{2}}\lambda^{-\frac{3}{4}}e^{\frac{4}{5}\lambda^{\frac{5}{2}}+p\lambda^{\frac{1}{2}}},
\end{aligned}\right.
\end{equation}
where
{ \begin{equation}\label{eq-d1234}
\begin{aligned}
d_{1}&=c_{1},
\quad
d_{2}=i\pi e^{-\nu\pi i}\frac{h(\xi)}{g(\xi)\Gamma\left(\frac{1}{2}+\nu\right)}c_{3},\\
d_{3}&=c_{2},
\quad
d_{4}=i\sin(\nu\pi)e^{-\nu\pi i}c_{3}.
\end{aligned}
\end{equation}}
A combination of \cref{eq-U-V-to-Phi-3pi/5} and \cref{eq-Y-U-V-case-I} leads to
\begin{equation}\label{eq-Y-PHI11-PHI12-3pi/5}
\begin{aligned}
Y\sim&\left[(C_{1}+r_{1,2}(\xi))d_{2}+(C_{2}+r_{2,2}(\xi))d_{4}\right](\hat{\Phi}_{2})_{11}\\
&+\left[(C_{1}+r_{1,2}(\xi))d_{1}+(C_{2}+r_{2,2}(\xi))d_{3}\right](\hat{\Phi}_{2})_{12}.
\end{aligned}
\end{equation}

Comparing \cref{eq-Y-PHI11-PHI12-pi/5} with \cref{eq-Y-PHI11-PHI12-3pi/5} and noting that $\hat\Phi_{2}(\lambda)=\hat\Phi_{1}(\lambda)S_{1}$, we get
$$s_{1}=\frac{(C_{1}+r_{1,2}(\xi))d_{2}+(C_{2}+r_{2,2}(\xi))d_{4}-(C_{2}+r_{2,1}(\xi))c_{3}}{-\left[(C_{1}+r_{1,2}(\xi))d_{1}+(C_{2}+r_{2,2}(\xi))d_{3}\right]}$$
and
\begin{equation}\label{eq-relation-difference-r1-r2}
(C_{1}+r_{1,2}(\xi))d_{1}+(C_{2}+r_{2,2}(\xi))d_{3}=(C_{1}+r_{1,1}(\xi))c_{1}+(C_{2}+r_{2,1}(\xi))c_{2}.
\end{equation}
We note from \cref{eq-d1234} that $d_{1}=c_{1}$, $d_{3}=c_{2}$ and $-\frac{d_{2}}{d_{1}}=\frac{d_{4}-c_{3}}{d_{3}}$, and therefore
\begin{equation}
\begin{aligned}
s_{1}=&-\frac{d_{2}}{d_{1}}+\frac{(r_{2,2}(\xi)-r_{2,1}(\xi))c_{3}}{-\left[(C_{1}+r_{1,2}(\xi))d_{1}+(C_{2}+r_{2,2}(\xi))d_{3}\right]}\\
=&-\left[\frac{\sqrt{2\pi} i}{\Gamma\left(\frac{1}{2}+\nu\right)}+R_{1}(\xi)\right]2^{\nu}\xi^{\nu}e^{2\xi E(\xi)-\nu\pi i}
\end{aligned}
\end{equation}
as $\xi\to+\infty$, where the estimate of $R_{1}(\xi)$ is given in \cref{approx-bound-R0}.
\ \\
\paragraph{$\bullet$ $C(\xi)\in[\delta_{2},+\infty)$ with $\delta_{2}\in(-3/2^{2/3},0)$}
\ \\

The Stokes geometry of $F(\eta,\xi)d\eta^2$ again has three states
corresponding to $\delta_{2}\leq C(\xi)<C_{0}$, $C(\xi)=C_{0}$ and $C(\xi)>C_{0}$ respectively;
see \cref{Figure-stokes-geometry-II}.
When $0<C(\xi)<C_{0}$, there is a Stokes line connecting $\eta_{1}$ and $\eta_{2}$.
When $C(\xi)=C_{0}$, there are two Stokes lines emanating from $\eta_{0}$ to $\eta_{1}$ and to $\eta_{2}$ respectively
and the corresponding Stokes geometry are called the Boutroux Graph in this case, see~\cite[Figure 1]{Masoero-2010}.
When $C(\xi)>C_{0}$, all the Stokes lines tend to infinity.
\begin{figure}[h]
\centering
\subfigure{
\includegraphics[width=0.3\textwidth]{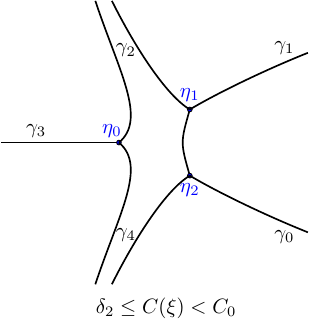}}
\subfigure{
\includegraphics[width=0.3\textwidth]{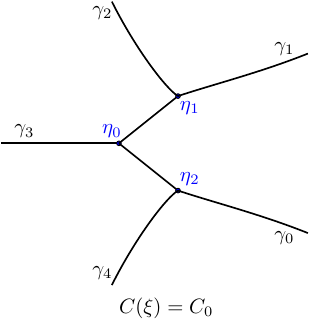}}
\subfigure{
\includegraphics[width=0.3\textwidth]{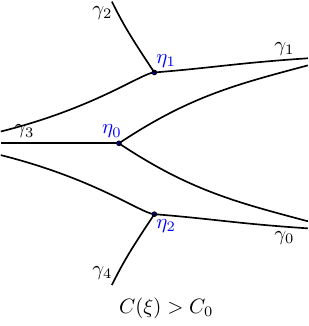}}
\caption{The Stokes geometry of $F(\eta,\xi)d\eta^{2}$ when $C(\xi)\in[\delta_{2},+\infty)$.}
\label{Figure-stokes-geometry-II}
\end{figure}

From the above three figures,
to derive $s_{0}$ in a unified form including all the three cases,
we have to approximate the solutions of \cref{eq-shrodinger-scaling} uniformly in the neighborhoods of $\eta_{i}$, $i=0,1,2$,
which is highly nontrivial.
However, if we intend to derive $s_{1}$ and $s_{2}$ instead of $s_0$,
we only need to work at the turning points $\eta_{0}$ and $\eta_{1}$.
Hence, we define a transformation $\hat{\zeta}(\eta)$ by
\begin{equation}\label{zetahat-define}
\int_{-\hat{\kappa}}^{\hat{\zeta}}(s^2-\hat{\kappa}^2)^{\frac{1}{2}}ds=\int_{\eta_{0}}^{\eta}F(s,\xi)^{\frac{1}{2}}ds,
\end{equation}
which is conformal in a neighborhood of $\eta_{0}$.
By the definition of $\hat{\kappa}$ in \cref{eq-def-kappa-hat-kappa}, we have
\begin{equation}\label{kappahat-determin}
\int_{-\hat{\kappa}}^{\hat{\kappa}}(s^2-\hat{\kappa}^2)^{\frac{1}{2}}ds=\frac{\hat{\kappa}^2\pi i}{2}=\int_{\eta_{0}}^{\eta_{1}}F(s,\xi)^{\frac{1}{2}}ds.
\end{equation}
With this formula, the conformality of $\hat{\zeta}(\eta)$ can be extended to the neighborhood of the Stokes curves emanating from $\eta_{0}$ and $\eta_{1}$.
{ Define
\begin{equation}
\hat{\varphi}_{1}(\hat{\zeta})=\frac{1}{\hat{g}(\xi)}U(\hat\nu,\sqrt{2\xi}\hat\zeta) \quad \text{and}\quad \hat\varphi_{2}(\hat\zeta)=\frac{1}{\hat{h}(\xi)}V(\hat\nu,\sqrt{2\xi}\hat\zeta),
\end{equation}
where $\hat{g}(\xi)$ and $\hat{h}(\xi)$ possess the following asymptotics
\begin{equation}
\hat{g}(\xi)^2\sim
\frac{g_{2}\xi^{-\frac{1}{2}}}{\Gamma(\frac{1}{2}+\hat\nu)},
\qquad
\hat{h}(\xi)^2\sim
h_{2}\xi^{-\frac{1}{2}}\Gamma(\frac{1}{2}+\hat\nu),
\end{equation}
as $\xi\to+\infty$, and $\hat{\nu}=-\frac{\xi\hat{\kappa}^2}{2}$ satisfying $\Re\hat{\nu}>0$.
Then analogously to \cref{lem-uniform-case-I},
we also have the following result.
\begin{lemma}\label{lem-uniform-case-II}
There are two constants $\hat{C}_{1}$ and $\hat{C}_{2}$ such that
\begin{equation}\label{uniform-case-II}
Y=\left(\frac{\hat\zeta^2-\hat{\kappa}^2}{F(\eta,\xi)}\right)^{\frac{1}{4}}\left\{\left[\hat{C}_{1}+\hat{r}_{1}(\eta,\xi)\right]\hat{\varphi}_{1}(\hat{\zeta})+\left[\hat{C}_{2}+\hat{r}_{2}(\eta,\xi)\right]\hat\varphi_{2}(\hat\zeta)\right\},
\end{equation}
where $\hat{\nu}=\frac{-\xi\hat{\kappa}^2}{2}$ satisfying $\Re\hat{\nu}>0$, and
\begin{equation}\label{approx-bound-r1-r2}
\hat{r}_{1}(\eta,\xi),
\hat{r}_{2}(\eta,\xi)=\mathcal{O}\left(\frac{|\hat{C_{1}}|+|\hat{C}_{2}|}{\xi}\right)
\end{equation}
as $\xi\to+\infty$, uniformly for $\eta$ on any adjacent Stokes lines emanating from $\eta_{1}$ and $\eta_{2}$.
\end{lemma}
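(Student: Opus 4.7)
The plan is to follow the same uniform-asymptotics machinery of~\cite{APC} that underlies \cref{lem-uniform-case-I}, with the conformal map $\hat\zeta$ anchored at the turning points $\eta_0$ and $\eta_1$ in place of the map $\zeta$ anchored at $\eta_1$ and $\eta_2$. The essential inputs are (a) global conformality of $\hat\zeta(\eta)$ in a neighborhood of the relevant Stokes lines, (b) the asymptotic balance between the parabolic cylinder functions $U(\hat\nu,\sqrt{2\xi}\hat\zeta)$ and $V(\hat\nu,\sqrt{2\xi}\hat\zeta)$ when $\Re\hat\nu>0$, and (c) a Volterra-type estimate for the error terms $\hat r_1,\hat r_2$. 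A structural difference from \cref{lem-uniform-case-I} is that \cref{lem-def-C0} guarantees $\Re\hat\kappa^2<0$, hence $\Re\hat\nu=-\xi\Re\hat\kappa^2/2>0$, so the two comparison functions play asymmetric roles and only a single case appears in \cref{approx-bound-r1-r2}.

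First I would verify that $\hat\zeta$ extends conformally from a neighborhood of $\eta_0$ outward along every Stokes curve we need. Near $\eta_0$ conformality is automatic from \cref{zetahat-define}. The quantization \cref{kappahat-determin} matches the periods across the Stokes segment from $\eta_0$ to $\eta_1$, so $\hat\zeta(\eta_1)=\hat\kappa$ and the map remains conformal up to and beyond $\eta_1$. By analytic continuation along the adjacent Stokes trajectories one transports $\hat\zeta$ onto the infinite Stokes rays issuing from $\eta_1$, and (through the Boutroux configuration) also onto the rays from $\eta_2$ as $C(\xi)$ crosses $C_0$. This uses precisely the connectedness of the Stokes graph depicted in \cref{Figure-stokes-geometry-II} and justifies the domain of uniformity claimed in the lemma.

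Next, the change of variable $W(\hat\zeta)=(d\hat\zeta/d\eta)^{-1/2}Y(\eta)$ converts \cref{eq-shrodinger-scaling} into
\begin{equation*}
\frac{d^{2}W}{d\hat\zeta^{2}}=\bigl[\,2\xi(\hat\zeta^{2}-\hat\kappa^{2})+\psi(\hat\zeta,\xi)\bigr]W,
\end{equation*}
where $\psi$ is the Schwarzian remainder carrying the deviation of $\xi^{2}F(\eta,\xi)(d\eta/d\hat\zeta)^{2}$ from the model potential $2\xi(\hat\zeta^{2}-\hat\kappa^{2})$. Conformality of $\hat\zeta$ on our domain makes $\psi$ analytic and, by direct computation at $\hat\zeta\to\infty$, of order $\mathcal{O}(\hat\zeta^{-2})$, so $\xi^{-1}\psi$ is integrable along each Stokes ray. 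Writing $W$ in the form of \cref{uniform-case-II} and applying variation of parameters with the parabolic cylinder Wronskian as Green's kernel, I obtain a coupled Volterra system for $\hat r_{1},\hat r_{2}$. Iterating this system and using the large-$\hat\nu$ asymptotics of~\cite[\S12.10]{NIST-handbook}, which in the regime $\Re\hat\nu>0$ give $|U|$ algebraically small times $1/\Gamma(\tfrac12+\hat\nu)$ and $|V|$ algebraically small times $\Gamma(\tfrac12+\hat\nu)$ along the appropriate rays, delivers the asymmetric bounds \cref{approx-bound-r1-r2}, with the extra $\xi^{-1}$ factor furnished by the $\xi^{-1}$ that factors out of the Green's kernel.

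The main obstacle I anticipate is making every estimate uniform in $C(\xi)\in[\delta_{2},+\infty)$, particularly across the transition $C(\xi)=C_{0}$ where the Stokes graph reorganizes into the Boutroux configuration and the topology of the Stokes lines emanating from $\eta_{1}$ and $\eta_{2}$ changes discontinuously. The cleanest way around this is to perform the Volterra iteration in the $\hat\zeta$-plane along the fixed canonical rays $\arg\hat\zeta=\pm\pi/4,\pm 3\pi/4$ and then pull back via $\hat\zeta(\eta)$; the Jacobian of this pullback remains uniformly bounded in $C(\xi)$ on any compact subinterval of $[\delta_{2},+\infty)$, which is what ultimately makes the implied constants in \cref{approx-bound-r1-r2} independent of $C(\xi)$.
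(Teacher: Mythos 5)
Your proposal follows essentially the same route as the paper, which omits the proof precisely because it is structurally identical to that of \cref{lem-uniform-case-I}, itself a transcription of the uniform-asymptotics argument of \cite[Theorem 1]{APC}: Liouville transform via the conformal map \cref{zetahat-define}, comparison with the parabolic-cylinder model, and a Volterra iteration whose kernel supplies the extra factor of $\xi^{-1}$ and whose $U$/$V$ asymmetry (from large-$\hat\nu$ asymptotics when $\Re\hat\nu>0$) gives the bounds \cref{approx-bound-r1-r2}. One small correction: your extension of $\hat\zeta$ ``onto the rays from $\eta_2$ as $C(\xi)$ crosses $C_0$'' is neither needed nor quite justified (conformality of $\hat\zeta$ is not transported through the third turning point $\eta_2$); the text preceding \cref{zetahat-define} makes clear the map is anchored only at $\eta_0$ and $\eta_1$, and the phrase ``emanating from $\eta_1$ and $\eta_2$'' in the lemma is a typo carried over from \cref{lem-uniform-case-I} and should read ``$\eta_0$ and $\eta_1$.''
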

}
\begin{remark}
It is readily seen that $\hat{r}_{1}(\eta,\xi)$ and $\hat{r}_{2}(\eta,\xi)$ have similar properties
as the ones of $r_{1}(\eta,\xi)$ and $r_{2}(\eta,\xi)$ stated in \cref{remark-r1-r2}.
Precisely speaking, their limit values as $\eta\to\infty$ with $\arg\eta\sim \frac{(1+2k)\pi}{5}$, $k=0,1,2$, exist,
and we denote $\hat{r}_{i,k}(\xi)=\lim\limits_{\eta\to\infty}\hat{r}_{i}(\eta,\xi)$ with $\arg\eta\sim \frac{(1+2k)\pi}{5}$, $k=0,1,2$.
\end{remark}

The proof of this lemma is essentially the same as that of \cref{lem-uniform-case-I} and hence omitted here.
As an analogue of \cref{lemma-zeta-eta-infty-relation}, we can also obtain the asymptotics of $\hat{\zeta}(\eta)$.
\begin{lemma}\label{lemma-hat-zeta-eta-infty-relation}
As $|\eta|\rightarrow+\infty$, the asymptotic behavior of $\hat{\zeta}(\eta)$ is given by
\begin{equation}\label{eq-relation-hat-zeta-eta}
\frac{1}{2}\hat{\zeta}^2-\frac{\hat{\kappa}^2}{2}\log{\hat{\zeta}}=\frac{4}{5}\eta^{\frac{5}{2}}+2C(\xi)\eta^{\frac{1}{2}}+F(\xi)+\mathcal{O}(\eta^{-\frac{1}{2}}),
\end{equation}
where $F(\xi)$ is given in \cref{Thm-H-negative}.
\end{lemma}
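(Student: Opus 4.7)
\textbf{Proof plan for \Cref{lemma-hat-zeta-eta-infty-relation}.} The argument mirrors the proof of \Cref{lemma-zeta-eta-infty-relation} in \Cref{sec:appendixB} under the substitutions $(\kappa,\zeta,I_E,E)\mapsto(\hat{\kappa},\hat{\zeta},I_F,F)$ and the replacement of the period pair $(\eta_1,\eta_2)$ by $(\eta_0,\eta_1)$. The plan is to expand both sides of the defining relation~\cref{zetahat-define} asymptotically as $|\eta|\to\infty$ and match.

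For the left-hand side, I would start from the closed-form antiderivative
\[\int(s^2-\hat{\kappa}^2)^{1/2}\,ds=\tfrac{1}{2}s\sqrt{s^2-\hat{\kappa}^2}-\tfrac{\hat{\kappa}^2}{2}\log\bigl(s+\sqrt{s^2-\hat{\kappa}^2}\bigr)+\textup{const},\]
use the expansions $\sqrt{\hat{\zeta}^2-\hat{\kappa}^2}=\hat{\zeta}-\hat{\kappa}^2/(2\hat{\zeta})+O(\hat{\zeta}^{-3})$ and $\log(\hat{\zeta}+\sqrt{\hat{\zeta}^2-\hat{\kappa}^2})=\log(2\hat{\zeta})+O(\hat{\zeta}^{-2})$ for large $|\hat{\zeta}|$, and apply the branch convention fixed in \Cref{branches} under which $\log(-\hat{\kappa})=\log\hat{\kappa}+\pi i$, so as to arrive at
\[\int_{-\hat{\kappa}}^{\hat{\zeta}}(s^2-\hat{\kappa}^2)^{1/2}\,ds=\tfrac{1}{2}\hat{\zeta}^{2}-\tfrac{\hat{\kappa}^2}{2}\log\hat{\zeta}-\tfrac{\hat{\kappa}^2}{4}-\tfrac{\hat{\kappa}^2\log 2}{2}+\tfrac{\hat{\kappa}^2\log\hat{\kappa}^2}{4}+\tfrac{\hat{\kappa}^2\pi i}{2}+O(\hat{\zeta}^{-2}).\]

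For the right-hand side, I would split $\int_{\eta_0}^{\eta}=\int_{\eta_0}^{\eta_1}+\int_{\eta_1}^{\eta}$, invoke the period identity~\cref{kappahat-determin} to replace the first piece by $\tfrac{\hat{\kappa}^2\pi i}{2}$, and expand the integrand of the second piece by the binomial series $(s^3+C(\xi)s+1)^{1/2}=s^{3/2}+\tfrac{C(\xi)}{2}s^{-1/2}+\tfrac{1}{2}s^{-3/2}+O(s^{-5/2})$ for large $|s|$. Integrating the explicit leading terms directly and extending the convergent subtracted remainder from $\eta$ out to $\infty e^{i\theta_2}$ (the neglected tail being $O(\eta^{-1/2})$), I recognize the resulting improper integral as exactly $\tfrac{1}{2}I_F(\xi)+\tfrac{2}{5}\eta_1^{5/2}+C(\xi)\eta_1^{1/2}$ via the definition in~\cref{eq-def-I-E(xi)}. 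After multiplying by the overall factor $2$ and cancelling the $\eta_1$-boundary terms, this delivers
\[\int_{\eta_1}^{\eta}F(s,\xi)^{1/2}\,ds=\tfrac{4}{5}\eta^{5/2}+2C(\xi)\eta^{1/2}+I_F(\xi)+O(\eta^{-1/2}).\]

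Equating the two sides and cancelling the common $\tfrac{\hat{\kappa}^2\pi i}{2}$, the explicit definition $F(\xi)=I_F(\xi)+\tfrac{\hat{\kappa}^2}{4}+\tfrac{\hat{\kappa}^2\log 2}{2}-\tfrac{\hat{\kappa}^2\log\hat{\kappa}^2}{4}$ from \Cref{Thm-H-negative} regroups the remaining constants into a single $F(\xi)$, yielding~\cref{eq-relation-hat-zeta-eta}. The one genuinely delicate point, and the step I expect to demand the most care, is the bookkeeping of branches of the square roots and logarithms across the transition $C(\xi)=-3/2^{2/3}$ at which $\eta_1,\eta_2$ coalesce and switch from a real pair to a complex-conjugate pair; the branches must be chosen compatibly with those drawn in \Cref{branches} so that the imaginary contributions $\tfrac{\hat{\kappa}^2\pi i}{2}$ coming from the period and from $\log(-\hat{\kappa})$ cancel with the correct sign rather than reappear. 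Once this branch bookkeeping (identical to the one underlying \Cref{lem-def-C0} and the companion lemma) is settled, the remainder is routine Laurent expansion at infinity.
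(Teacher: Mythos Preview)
Your proposal is correct and is essentially the analogue of the paper's proof of \Cref{lemma-zeta-eta-infty-relation} (the paper itself gives no separate argument for \Cref{lemma-hat-zeta-eta-infty-relation}, only the remark that it is obtained in the same way). The only cosmetic difference is that on the left-hand side the paper splits $\int_{-\kappa}^{\zeta}=\int_{-\kappa}^{\kappa}+\int_{\kappa}^{\zeta}$ and invokes the period identity for the first piece, whereas you evaluate the antiderivative directly at $-\hat{\kappa}$ and extract the same $\tfrac{\hat{\kappa}^2\pi i}{2}$ from the branch of $\log(-\hat{\kappa})$; these are trivially equivalent.

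One small correction to your closing remark: the transition $C(\xi)=-3/2^{2/3}$ is not actually relevant for this lemma, since $\hat{\zeta}$ is defined only for $C(\xi)\in[\delta_2,+\infty)$ with $\delta_2>-3/2^{2/3}$, so $\eta_1,\eta_2$ are a complex-conjugate pair throughout and never coalesce. The Stokes-geometry transition that does occur in this range is at $C(\xi)=C_0$ (cf.\ \Cref{Figure-stokes-geometry-II}), but it leaves $\eta_0$ and $\eta_1$ separated and does not affect the large-$\eta$ expansion you carry out. So the branch bookkeeping here is in fact somewhat simpler than you anticipate.
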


\textbf{Proof of \cref{Thm-H-negative} (part (ii)):}
By the last two Lemmas and making use of the asymptotics of $U(\hat{\nu},\sqrt{2\xi}\hat{\zeta})$
and $V(\hat{\nu},\sqrt{2\xi}\hat{\zeta})$ as $\hat{\zeta}\to\infty$
with $\arg\hat{\zeta}\sim\frac{\pi}{4}$ and $\frac{3\pi}{4}$, we get
\begin{equation}\label{eq-U-V-to-Phi-pi/5-hat-zeta}
\left\{\begin{aligned}
\left(\frac{\hat\zeta^2-\hat\kappa^2}{F(\eta,\xi)}\right)^{\frac{1}{4}}{ \hat{\varphi}_{1}(\zeta)}&\sim \hat{c}_{1}\frac{-i}{\sqrt{2}}\lambda^{-\frac{3}{4}}e^{-\frac{4}{5}\lambda^{\frac{5}{2}}-p\lambda^{\frac{1}{2}}},\\
\left(\frac{\hat\zeta^2-\hat\kappa^2}{F(\eta,\xi)}\right)^{\frac{1}{4}}{ \hat{\varphi}_{2}(\zeta)}&\sim \hat{c}_{2}\frac{-i}{\sqrt{2}}\lambda^{-\frac{3}{4}}e^{-\frac{4}{5}\lambda^{\frac{5}{2}}-p\lambda^{\frac{1}{2}}}+\hat{c}_{3}\frac{-i}{\sqrt{2}}\lambda^{-\frac{1}{4}}e^{\frac{4}{5}\lambda^{\frac{5}{2}}+p\lambda^{\frac{1}{2}}},
\end{aligned}\right.
\end{equation}
and
\begin{equation}\label{eq-U-V-to-Phi-3pi/5-hat-zeta}
\left\{\begin{aligned}
\left(\frac{\hat\zeta^2-\hat\kappa^2}{F(\eta,\xi)}\right)^{\frac{1}{4}}{ \hat{\varphi}_{1}(\zeta)}&\sim \hat{d}_{1}\frac{-i}{\sqrt{2}}\lambda^{-\frac{3}{4}}e^{-\frac{4}{5}\lambda^{\frac{5}{2}}-p\lambda^{\frac{1}{2}}}+\hat{d}_{2}\frac{-i}{\sqrt{2}}\lambda^{-\frac{3}{4}}e^{\frac{4}{5}\lambda^{\frac{5}{2}}+p\lambda^{\frac{1}{2}}},\\
\left(\frac{\hat\zeta^2-\hat\kappa^2}{F(\eta,\xi)}\right)^{\frac{1}{4}}{ \hat{\varphi}_{2}(\zeta)}&\sim \hat{d}_{3}\frac{-i}{\sqrt{2}}\lambda^{-\frac{3}{4}}e^{-\frac{4}{5}\lambda^{\frac{5}{2}}-p\lambda^{\frac{1}{2}}}+\hat{d}_{4}\frac{-i}{\sqrt{2}}\lambda^{-\frac{3}{4}}e^{\frac{4}{5}\lambda^{\frac{5}{2}}+p\lambda^{\frac{1}{2}}},
\end{aligned}\right.
\end{equation}
{
where
\begin{equation}
\begin{aligned}
\hat{c}_{1}&=\frac{i}{\hat{g}(\xi)}2^{-\frac{\hat{\nu}}{2}-\frac{1}{4}}\xi^{\frac{1}{20}-\frac{\hat{\nu}}{2}}e^{-\xi F(\xi)},\\
\hat{c}_{2}&=\frac{1}{\hat{h}(\xi)}\frac{i}{\Gamma\left(\frac{1}{2}-\hat{\nu}\right)}2^{-\frac{\hat{\nu}}{2}-\frac{1}{4}}\xi^{\frac{1}{20}-\frac{\hat{\nu}}{2}}e^{-\xi E(\xi)}, \\
\hat{c}_{3}&=\frac{i}{\hat{h}(\xi)}\sqrt{\frac{2}{\pi}}2^{\frac{\hat{\nu}}{2}-\frac{1}{4}}\xi^{\frac{1}{20}+\frac{\hat{\nu}}{2}}e^{\xi F(\xi)}
\end{aligned}
\end{equation}
and
\begin{equation*}
\hat{d}_{1}=\hat{c}_{1},\quad
\hat{d}_{2}=i\pi e^{-\hat\nu\pi i}\frac{\hat{h}(\xi)}{\hat{g}(\xi)\Gamma\left(\frac{1}{2}+\nu\right)}\hat{c}_{3},
\quad
\hat{d}_{3}=\hat{c}_{2},
\quad
\hat{d}_{4}=i\sin(\hat{\nu}\pi)e^{-\hat{\nu}\pi i}\hat{c}_{3}.
\end{equation*}
}
Substituting these approximations into \cref{uniform-case-II} and using \cref{eq-uniform-asymptotic-phik}, we get
\begin{equation}\label{eq-Y-PHI11-PHI12-pi/5-hat}
\begin{aligned}
Y\sim \left[(\hat{C}_{1}+\hat{r}_{1,1}(\xi))\hat{c}_{1}+(\hat{C}_{2}+\hat{r}_{2,1}(\xi))\hat{c}_{2}\right](\hat{\Phi}_{1})_{12} +(\hat{C}_{2}+\hat{r}_{2,1}(\xi))\hat{c}_{3}(\hat{\Phi}_{1})_{11},
\end{aligned}
\end{equation}
and
\begin{equation}\label{eq-Y-PHI11-PHI12-3pi/5-hat}
\begin{aligned}
Y\sim\left[(\hat{C}_{1}+\hat{r}_{1,2}(\xi))\hat{d}_{2}\right.&\left.+(\hat{C}_{2}+\hat{r}_{2,2}(\xi))\hat{d}_{4}\right](\hat{\Phi}_{2})_{11}\\
&+\left[(\hat{C}_{1}+\hat{r}_{1,2}(\xi))\hat{d}_{1}+(\hat{C}_{2}+\hat{r}_{2,2}(\xi))\hat{d}_{3}\right](\hat{\Phi}_{2})_{12}.
\end{aligned}
\end{equation}
In a similar manner as that in deriving $s_{1}$ in the case $C(\xi)\in(-\infty,\delta_{1}]$, we have
\begin{equation}\label{s1-case-II}
s_{1}=-\left[\frac{\sqrt{2\pi} i}{\Gamma\left(\frac{1}{2}+\hat{\nu}\right)}+\hat{R}_{1}(\xi)\right]2^{\hat{\nu}}\xi^{\hat{\nu}}e^{2\xi F(\xi)-2\hat{\nu}\pi i}
\end{equation}
as $\xi\to+\infty$, uniformly for all $C(\xi)\in[\delta_{2},+\infty)$,
and the estimate of $\hat{R}_{1}(\xi)$ is given in \cref{Thm-H-negative}.

A combination of \cite[Eqs.~(12.2.15), (12.2.16), (12.9.1), and (12.9.4)]{NIST-handbook} yields
\begin{equation}
\begin{aligned}
U(\hat{\nu},z)&\sim e^{(2\hat{\nu}+1)\pi i}e^{-\frac{1}{4}z^2}z^{-\hat{\nu}-\frac{1}{2}}+\frac{\sqrt{2\pi}e^{\left(\frac{1}{2}-\hat{\nu}\right)\pi i}}{\Gamma\left(\frac{1}{2}+\hat{\nu}\right)}e^{\frac{1}{4}z^2}z^{\hat{\nu}-\frac{1}{2}},\\
V(\hat{\nu},z)&\sim \frac{ie^{2\hat{\nu}\pi i }}{\Gamma\left(\frac{1}{2}-\hat{\nu}\right)}e^{-\frac{1}{4}z^2}z^{-\hat{\nu}-\frac{1}{2}}+\sin(\hat{\nu}\pi)\sqrt{\frac{2}{\pi}}e^{\pi i(\frac{1}{2}-\hat{\nu})}e^{\frac{1}{4}z^2}z^{\hat{\nu}-\frac{1}{2}},
\end{aligned}
\end{equation}
as $z\to\infty$ with $\arg{z}\sim\frac{5\pi}{4}$.
Hence, when $\eta\to\infty$ with $\arg{\eta}\sim\pi$, we get
\begin{equation}\label{eq-U-V-to-Phi-pi-hat-zeta}
\left\{\begin{aligned}
\left(\frac{\hat\zeta^2-\hat\kappa^2}{F(\eta,\xi)}\right)^{\frac{1}{4}}{ \hat{\varphi}_{1}(\zeta)}&\sim \hat{e}_{1}\frac{-i}{\sqrt{2}}\lambda^{-\frac{3}{4}}e^{-\frac{4}{5}\lambda^{\frac{5}{2}}-p\lambda^{\frac{1}{2}}}+\hat{e}_{2}\frac{-i}{\sqrt{2}}\lambda^{-\frac{3}{4}}e^{\frac{4}{5}\lambda^{\frac{5}{2}}+p\lambda^{\frac{1}{2}}},\\
\left(\frac{\hat\zeta^2-\hat\kappa^2}{F(\eta,\xi)}\right)^{\frac{1}{4}}{ \hat{\varphi}_{2}(\zeta)}&\sim \hat{e}_{3}\frac{-i}{\sqrt{2}}\lambda^{-\frac{3}{4}}e^{-\frac{4}{5}\lambda^{\frac{5}{2}}-p\lambda^{\frac{1}{2}}}+\hat{e}_{4}\frac{-i}{\sqrt{2}}\lambda^{-\frac{3}{4}}e^{\frac{4}{5}\lambda^{\frac{5}{2}}+p\lambda^{\frac{1}{2}}},
\end{aligned}\right.
\end{equation}
where
\begin{equation}\label{eq-relation-e-d}
\begin{aligned}
\hat{e}_{1}=\hat{d}_{1}e^{(2\hat{\nu}+1)\pi i},\quad \hat{e}_{2}=\hat{d}_{2},\quad \hat{e}_{3}=\hat{d}_{3}e^{2\hat{\nu}\pi i},\quad \hat{e}_{4}=\hat{d}_{4}.
\end{aligned}
\end{equation}
Substituting \cref{eq-U-V-to-Phi-pi-hat-zeta} into \cref{uniform-case-II} and noting \cref{eq-uniform-asymptotic-phik}, we further obtain
\begin{equation}\label{eq-Y-PHI11-PHI12-pi-hat}
\begin{aligned}
Y\sim\left[(\hat{C}_{1}+\hat{r}_{1,3}(\xi))\hat{e}_{2}\right.&\left.+(\hat{C}_{2}+\hat{r}_{2,3}(\xi))\hat{e}_{4}\right](\hat{\Phi}_{3})_{11}\\
&+\left[(\hat{C}_{1}+\hat{r}_{1,3}(\xi))\hat{e}_{1}+(\hat{C}_{2}+\hat{r}_{2,3}(\xi))\hat{e}_{3}\right](\hat{\Phi}_{3})_{12}.
\end{aligned}
\end{equation}
Combining \cref{eq-Y-PHI11-PHI12-3pi/5-hat} with \cref{eq-Y-PHI11-PHI12-pi-hat}
and observing that
\[
\hat{\Phi}_{3}=\hat{\Phi}_{2}\left[\begin{matrix}1&0\\s_{2}&1\end{matrix}\right],
\]
we get the following two equations
\begin{equation}\label{eq-relation-difference-r1-r2-2-3}
(\hat{C}_{1}+\hat{r}_{1,2}(\xi))\hat{d}_{2}+(\hat{C}_{2}+\hat{r}_{2,2}(\xi))d_{4}=(\hat{C}_{1}+\hat{r}_{1,3}(\xi))\hat{e}_{2}+(\hat{C}_{2}+\hat{r}_{2,3}(\xi))\hat{e}_{4}
\end{equation}
and
\begin{equation}\label{eq-s2-representation}
s_{2}
=\frac{(\hat{C}_{1}+\hat{r}_{1,2}(\xi))\hat{d}_{1}+(\hat{C}_{2}+\hat{r}_{2,2}(\xi))\hat{d}_{3}-(\hat{C}_{1}+\hat{r}_{1,3}(\xi))\hat{e}_{1}+(\hat{C}_{2}+\hat{r}_{2,3}(\xi))\hat{e}_{3}}
{(\hat{C}_{1}+\hat{r}_{1,3}(\xi))\hat{e}_{2}+(\hat{C}_{2}+\hat{r}_{2,3}(\xi))\hat{e}_{4}}.
\end{equation}
From \cref{eq-relation-e-d} and \cref{eq-relation-difference-r1-r2-2-3}, it is readily seen that $(\hat{r}_{2,3}(\xi)-\hat{r}_{2,2}(\xi))=\frac{\hat{d}_{2}}{\hat{d}_{4}}(\hat{r}_{1,2}(\xi)-\hat{r}_{1,3}(\xi))$,
which implies $\hat{r}_{1,2}(\xi)-\hat{r}_{1,3}(\xi)=\mathcal{O}(\xi^{-1}\sin(\hat\nu\pi))$ as $\xi\to+\infty$ when $\hat{\nu}>0$.
Making use of this fact and noting that $\frac{\hat{d}_{1}-\hat{e}_{1}}{\hat{e}_{2}}=\frac{\hat{d}_{3}-\hat{e}_{3}}{\hat{e}_{4}}$ from \cref{eq-relation-e-d}, we obtain
\begin{equation}\label{s2-case-II}
\begin{aligned}
s_{2}&=\frac{\hat{d}_{1}-\hat{e}_{1}}{\hat{e}_{2}}+\frac{(\hat{r}_{1,2}(\xi)-\hat{r}_{1,3}(\xi))\hat{d}_{1}-(\hat{r}_{2,3}(\xi)-\hat{r}_{2,2}(\xi))\hat{d}_{3}}{(\hat{C}_{1}+\hat{r}_{1,3}(\xi))\hat{e}_{2}+(\hat{C}_{2}+\hat{r}_{2,3}(\xi))\hat{e}_{4}}\\
&=\left[-\sqrt{\frac{2}{\pi}} i \Gamma\left(\frac{1}{2}+\hat{\nu}\right)\cos\left(\hat{\nu}\pi\right)+\hat{R}_{2}(\xi)\right]2^{-\hat{\nu}}\xi^{-\hat{\nu}}e^{-2\hat{\nu}\pi i-2\xi F(\xi)}
\end{aligned}
\end{equation}
as $\xi\to+\infty$, uniformly for all $C(\xi)\in[\delta_{2},+\infty)$,
and the estimate of $\hat{R}_{2}(\xi)$ is given in \cref{lemma-hat-zeta-eta-infty-relation}.

Finally, when $\delta_{2}\leq C(\xi)\leq \delta_{1}$, the turning points $\eta_{1}$ and $\eta_{2}$ are complex conjugates.
Then from the definitions of $I_{E}(\xi)$ and $I_{F}(\xi)$ in \cref{eq-def-I-E(xi)},
we have $I_{E}(\xi)=\overline{I_{F}(\xi)}$ and
\begin{equation}
\begin{aligned}
I_{F}(\xi)-I_{E}(\xi)=&2\lim\limits_{\eta\to\infty}\int_{\eta_{1}}^{\eta}\left[\left(s^3+C(\xi)s+1\right)^{\frac{1}{2}}
-\left(s^{\frac{3}{2}}+\frac{C(\xi)}{2}s^{-\frac{1}{2}}\right)\right]ds
\\
&-\left(\frac{4}{5}(\eta_{1})^{\frac{5}{2}}+2C(\xi)(\eta_{1})^{\frac{1}{2}}\right)+\left(\frac{4}{5}(\eta_{2})^{\frac{5}{2}}+2C(\xi)(\eta_{2})^{\frac{1}{2}}\right)\\
&-2\lim\limits_{\eta\to\infty}\int_{\eta_{2}}^{\eta}\left[\left(s^3+C(\xi)s+1\right)^{\frac{1}{2}}
-\left(s^{\frac{3}{2}}+\frac{C(\xi)}{2}s^{-\frac{1}{2}}\right)\right]ds\\
=&2\int_{\eta_{1}}^{\eta_{2}}\left(s^3+C(\xi)s+1\right)^{\frac{1}{2}}ds\\
=&\frac{\kappa^2\pi i}{2}.
\end{aligned}
\end{equation}
Hence, $\kappa^2\pi=2\Im(I_{F}(\xi)-I_{E}(\xi))$. To verify $\Re{(\hat{\kappa}^2\pi i+2 I_{F}(\xi))}=0$,
we note that
\begin{equation}\label{eq-hat-kappa-I_F}
\begin{aligned}
\frac{\hat{\kappa}^2\pi i}{2}+ I_{F}(\xi)= &2\int_{\eta_{0}}^{\infty}\left[\left(s^3+C(\xi)s+1\right)^{\frac{1}{2}}-\left(s^{\frac{3}{2}}+\frac{C(\xi)}{2}s^{-\frac{1}{2}}\right)\right]ds\\
&-\left(\frac{4}{5}(\eta_{0})^{\frac{5}{2}}+2C(\xi)(\eta_{0})^{\frac{1}{2}}\right)
\end{aligned}
\end{equation}
and the integral path is from $\eta_{0}$ to $\infty$ along the upper edge of the negative real axis.
Hence the integral in \cref{eq-hat-kappa-I_F} is purely imaginary.
Combining this with the fact that $\eta_{0}<0$, we conclude that $\Re{(\hat{\kappa}^2\pi i+2 I_{F}(\xi))}=0$.

\subsection{Case II: \texorpdfstring{$H\to+\infty$}{H to Infinity}}

With the scaling $\lambda=\xi^{2/5}\eta$, $H=\frac{\xi^{6/5}}{7}$ and $p=2C(\xi)\xi^{4/5}$ as $\xi\rightarrow+\infty$,
equation \cref{Schrodinger-equation-triconfluent-Heun} is reduced to
\begin{equation}\label{eq-shrodinger-H-positive}
\frac{d^{2}Y}{d\eta^{2}}=\xi^{2}\left[4(\eta^3+C(\xi)\eta-1)\right]Y:=\xi^{2}\tilde{F}(\eta,\xi)Y.
\end{equation}
There are three turning points, say $\eta_{j}$, $j=0,1,2$,
where $\eta_{0}$ is in the right half-plane and $\eta_{1},\eta_{2}$ are in the left half-plane.

According to \cite{Masoero-2010}, there are also three limiting states of the Stokes geometry of the quadratic form $F(\eta,\xi)d\eta^2$ as $\xi\rightarrow+\infty$,
which are described in \cref{Figure-stokes-geometry-III}.
When $C(\xi)>-3/2^{\frac{2}{3}}$, all the Stokes lines tend to infinity.
When $C(\xi)=-3/2^{2/3}$, the two turning points $\eta_{1}$ and $\eta_{2}$ coalesce to a double turning point.
When $C(\xi)<-3/2^{2/3}$, there is a Stokes line connecting $\eta_{1}$ and $\eta_{2}$.

\begin{figure}[h]
\centering
\subfigure{
\includegraphics[width=0.3\textwidth]{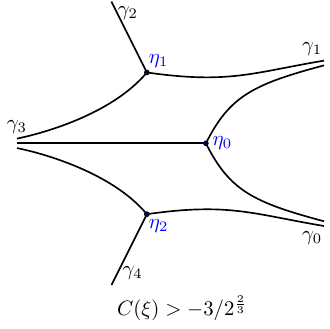}}
\subfigure{
\includegraphics[width=0.3\textwidth]{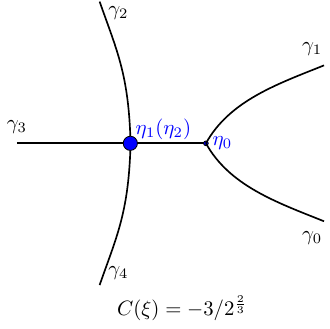}}
\subfigure{
\includegraphics[width=0.3\textwidth]{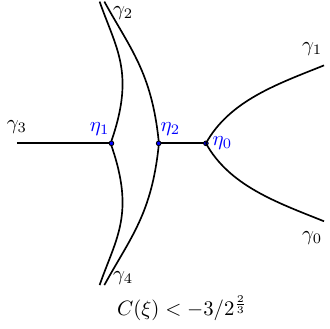}}
\caption{The Stokes geometry of $\tilde{F}(\eta,\xi)d\eta^{2}$.}
\label{Figure-stokes-geometry-III}
\end{figure}

From \cref{Figure-stokes-geometry-III},
we find that in order to calculate $s_{0}$,
we only need to obtain the uniform asymptotics of $Y$ on the two adjacent Stokes emanating from $\eta_{0}$
and tending to infinity with $\arg\eta\sim\pm\frac{\pi}{5}$.
To do so, we define two conformal mappings $\omega(\eta)$ by
\begin{equation}\label{omega-define}
\int_{0}^{\omega}s^{\frac{1}{2}}ds=\int_{\eta_{0}}^{\eta}\tilde{F}(s,\xi)^{\frac{1}{2}}ds,
\end{equation}
along the two Stokes curves.
Then we have the following lemma; see~\cite{APC} or \cite{LongLLZ}.
\begin{lemma}\label{lem-uniform-airy}
There are two constants $D_{1}$ and $D_{2}$ such that
\begin{equation}
Y=\left(\frac{\omega}{F(\eta,\xi)}\right)^{\frac{1}{4}}\left\{D_{1}\left[1+r_{3}(\eta,\xi)\right]\Ai(\xi^{\frac{2}{3}}\omega)+D_{2}\left[1+r_{4}(\eta,\xi)\right]\Bi(\xi^{\frac{2}{3}}\omega)\right\}
\end{equation}
with $r_{3}(\eta,\xi),r_{4}(\eta,\xi)=\mathcal{O}(\xi^{-1/2})$ as $\xi\to+\infty$, uniformly for $\eta$ on any two adjacent Stokes lines emanating from $\eta_{0}$ tending to infinity with $\arg\eta\sim\pm\frac{\pi}{5}$.
\end{lemma}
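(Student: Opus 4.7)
The plan is to derive the estimate by the standard Liouville--Green/Airy comparison method at a simple turning point, as used in \cite{APC} and implicitly in the proof of \cref{lem-uniform-case-I} above. Since $\eta_{0}$ is a simple zero of $\tilde{F}(\eta,\xi)$, the map $\omega(\eta)$ defined by \cref{omega-define} is holomorphic and injective in a neighborhood of $\eta_{0}$ with $\omega(\eta_{0})=0$ and $(\omega')^{2}=\tilde{F}/\omega$. From the Stokes geometry in \cref{Figure-stokes-geometry-III} one sees that for every real $C(\xi)$ the two Stokes rays from $\eta_{0}$ with $\arg\eta\sim\pm\pi/5$ remain bounded away from $\eta_{1},\eta_{2}$, so the domain of analyticity of $\omega(\eta)$ extends continuously along both rays to infinity.

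Applying the Liouville substitution
\[
Y(\eta)=\left(\frac{\omega}{\tilde{F}(\eta,\xi)}\right)^{\frac{1}{4}}W(\omega)
\]
converts \cref{eq-shrodinger-H-positive} into the perturbed Airy equation
\[
\frac{d^{2}W}{d\omega^{2}}=\bigl[\xi^{2}\omega+\psi(\omega)\bigr]W,
\]
where $\psi$ is a Schwarzian-type correction, holomorphic at $\omega=0$, independent of $\xi$, and of order $\mathcal{O}(|\omega|^{-2})$ as $|\omega|\to\infty$ along the rays. Writing the general solution as
\[
W=D_{1}[1+r_{3}]\,\Ai(\xi^{2/3}\omega)+D_{2}[1+r_{4}]\,\Bi(\xi^{2/3}\omega),
\]
variation of parameters with the Wronskian $\mathrm{Wr}(\Ai,\Bi)=1/\pi$ yields a coupled Volterra system for $r_{3},r_{4}$ whose kernels have the shape $\pi\,\xi^{-2/3}\psi(\omega')\,\Ai(\xi^{2/3}\omega')\Bi(\xi^{2/3}\omega')$. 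On the chosen rays both Airy functions are oscillatory of comparable magnitude, so the standard bounds $|\Ai(z)|,|\Bi(z)|=\mathcal{O}(|z|^{-1/4})$ apply, each kernel is $\mathcal{O}(\xi^{-1}|\omega|^{-1/2})$, and integration against $\psi$ gives a total weight of order $\mathcal{O}(\xi^{-1/2})$, the square-root order coming from the contribution of a neighborhood of $\omega=0$ where the oscillatory Airy bound degenerates. Standard Volterra iteration then delivers $r_{3},r_{4}=\mathcal{O}(\xi^{-1/2})$.

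The main technical point I expect to be nontrivial is proving that this bound is uniform in $C(\xi)\in\mathbb{R}$. Since the Stokes rays, the map $\omega(\eta)$, and the correction $\psi(\omega)$ all depend continuously on $C(\xi)$, one must verify that the implicit constants in the Airy and $\psi$ estimates do not degenerate as $|C(\xi)|\to\infty$. For bounded $C(\xi)$ this is immediate by compactness; for large $|C(\xi)|$, uniformity follows from the explicit form $\tilde{F}=4(\eta^{3}+C(\xi)\eta-1)$, which gives the large-$|\eta|$ behavior $\omega^{3/2}\sim\tfrac{8}{5}\eta^{5/2}$ with subleading corrections whose $C(\xi)$-dependence is tracked term by term. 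Once uniformity of the kernel estimates is established, the lemma follows from the same iteration template used for \cref{lem-uniform-case-I}, and a detailed version can be copied almost verbatim from \cite{APC} or \cite{LongLLZ}.
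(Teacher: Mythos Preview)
The paper does not give its own proof of this lemma; it simply cites \cite{APC} and \cite{LongLLZ} and moves on. Your sketch is exactly the standard Langer--Olver turning-point argument carried out in those references (Liouville substitution, perturbed Airy equation, Volterra iteration), so your approach coincides with what the paper relies on; the only quibble is that your justification for the specific order $\mathcal{O}(\xi^{-1/2})$ is slightly ad hoc---the standard theory at an isolated simple turning point in fact gives $\mathcal{O}(\xi^{-1})$, and the paper's weaker bound is just what suffices for the application.
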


Moreover, the asymptotics of $\omega(\eta)$ as $\eta\to\infty$ can be derived and stated as follows.
\begin{lemma}\label{lemma-omega-eta-infty-relation}
As $|\eta|\rightarrow\infty$ with $\arg\eta\in \left(-\frac{3\pi}{5},\frac{3\pi}{5}\right)$, we have
\begin{equation}\label{omega-eta-infty-relation}
\frac{2}{3}\omega^{\frac{3}{2}}=\frac{4}{5}\eta^{\frac{5}{2}}+2C(\xi)\eta^{\frac{1}{2}}+G(\xi)+\mathcal{O}\left(\eta^{-\frac{1}{2}}\right),
\end{equation}
where $G(\xi)$ is defined in \cref{Thm-H-positive}.
\end{lemma}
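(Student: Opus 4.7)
The plan is to unpack the defining relation \cref{omega-define}, which upon evaluating the left-hand integral reads
\[
\frac{2}{3}\omega^{3/2}=2\int_{\eta_{0}}^{\eta}\left(s^{3}+C(\xi)s-1\right)^{1/2}ds,
\]
and then expand the right-hand side for large $|\eta|$ by isolating the divergent part of the integrand at infinity. The binomial expansion $(s^{3}+Cs-1)^{1/2}=s^{3/2}+\tfrac{C}{2}s^{-1/2}+\mathcal{O}(s^{-3/2})$ as $|s|\to\infty$ shows that the remainder $(s^{3}+Cs-1)^{1/2}-s^{3/2}-\tfrac{C}{2}s^{-1/2}$ is absolutely integrable along any ray to infinity in the sector, which is the whole point of the subtraction scheme already used in the definition \cref{eq-def-G(xi)}.

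The next step is to split and rearrange the integral as
\[
2\int_{\eta_{0}}^{\eta}(\cdots)^{1/2}ds=2\int_{\eta_{0}}^{\infty e^{i\theta}}\!\left[(\cdots)^{1/2}-s^{3/2}-\tfrac{C}{2}s^{-1/2}\right]ds-\left(\tfrac{4}{5}\eta_{0}^{5/2}+2C\eta_{0}^{1/2}\right)+\tfrac{4}{5}\eta^{5/2}+2C\eta^{1/2}-T(\eta),
\]
where $T(\eta)=2\int_{\eta}^{\infty e^{i\theta}}[(\cdots)^{1/2}-s^{3/2}-\tfrac{C}{2}s^{-1/2}]ds$ is the tail and $\theta\in(-2\pi/5,2\pi/5)$ is chosen so that the ray from $\eta$ to $\infty e^{i\theta}$ stays in the intended sector. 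The first two terms on the right assemble precisely into $G(\xi)$ by its definition in \cref{Thm-H-positive}, while the tail obeys $T(\eta)=\mathcal{O}(\eta^{-1/2})$ because its integrand is $\mathcal{O}(s^{-3/2})$ at infinity. Combining everything yields \cref{omega-eta-infty-relation}.

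The main obstacle is bookkeeping of branches and path deformations. One has to verify that the branch of $(s^{3}+C(\xi)s-1)^{1/2}$ fixed by the conformal map \cref{omega-define} near $\eta_{0}$ extends analytically throughout the sector $\arg\eta\in(-3\pi/5,3\pi/5)$ without crossing the other two turning points $\eta_{1},\eta_{2}$, and that the path from $\eta_{0}$ through $\eta$ out to $\infty e^{i\theta}$ lies entirely in this single-valued region so that the deformation used in the splitting above is legitimate. This is most conveniently done by inspecting the Stokes geometry recalled in \cref{Figure-stokes-geometry-III}: in all three configurations for $C(\xi)$, two of the five Stokes rays emanating from $\eta_{0}$ tend to infinity with $\arg\eta\sim\pm\pi/5$, and the sector between them contains the admissible directions $\theta$. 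Once this is checked, the estimate $T(\eta)=\mathcal{O}(\eta^{-1/2})$ is routine and, unlike the Case I analogue \cref{lemma-zeta-eta-infty-relation}, uniform in $C(\xi)$ on any compact subset of $\mathbb{R}$ since no coalescence of the distinguished turning point $\eta_{0}$ with the others occurs.
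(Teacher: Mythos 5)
Your proposal is correct and follows essentially the same route the paper intends: it is the single-turning-point analogue of the computation in Appendix B for \cref{lemma-zeta-eta-infty-relation}, namely evaluating the left side of \cref{omega-define} exactly as $\tfrac{2}{3}\omega^{3/2}$, subtracting the divergent part $2\bigl(s^{3/2}+\tfrac{C(\xi)}{2}s^{-1/2}\bigr)$ of the integrand on the right, and recognizing the convergent piece as $G(\xi)$ with an $\mathcal{O}(\eta^{-1/2})$ tail. The argument is simpler here than in Appendix B since there is no second turning point and hence no logarithmic term, exactly as you observe.
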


\textbf{Proof of \cref{Thm-H-positive}:}
If $\eta\rightarrow\infty$ with $\arg\eta\sim\frac{\pi}{5}$,
then $\arg\omega\sim\frac{\pi}{3}$.
Hence, by the asymptotics of $\Ai(z)$ and $\Bi(z)$ in \cite[Eqs.~(45) and (47)]{LongLLZ},
noting that $\lambda=\xi^{2/5}\eta$ and the definition of $\tilde{F}(\eta,\xi)$ in \cref{eq-shrodinger-H-positive},
we get
\begin{equation}\label{eq-Phi-pi/5-airy}
\begin{split}
\left(1+r_{3}(\eta,\xi)\right)\left(\frac{\omega}{\tilde{F}(\eta,\xi)}\right)^{\frac{1}{4}}\Ai(\xi^{\frac{2}{3}}\omega)\sim& g_{1}\frac{-i}{\sqrt{2}}\lambda^{-\frac{3}{4}}e^{-\frac{4}{5}\lambda^{\frac{5}{2}}-p\lambda^{\frac{1}{2}}},\\
\left(1+r_{4}(\eta,\xi)\right)\left(\frac{\omega}{\tilde{F}(\eta,\xi)}\right)^{\frac{1}{4}}\Bi(\xi^{\frac{2}{3}}\omega)\sim& ig_{1}\frac{-i}{\sqrt{2}}\lambda^{-\frac{3}{4}}e^{-\frac{4}{5}\lambda^{\frac{5}{2}}-p\lambda^{\frac{1}{2}}}\\
&+2g_{2}\frac{-i}{\sqrt{2}}\lambda^{-\frac{3}{4}}e^{\frac{4}{5}\lambda^{\frac{5}{2}}+p\lambda^{\frac{1}{2}}}
\end{split}
\end{equation}
as $\lambda\rightarrow\infty$ with $\arg\lambda\sim\frac{\pi}{5}$, where
\begin{equation}\label{eq-g1-g2}
g_{1}=i\frac{1}{2\sqrt{\pi}}\xi^{\frac{3}{10}}e^{-\xi G(\xi)+\mathcal{O}(\xi^{-\frac{1}{2}})}\quad \text{and}\quad g_{2}=\frac{i}{2\sqrt{\pi}}\xi^{\frac{3}{10}}e^{\xi G(\xi)+\mathcal{O}(\xi^{-\frac{1}{2}})}
\end{equation}
as $\xi\rightarrow+\infty$.
When $\eta\rightarrow\infty$ with $\arg\eta\sim-\frac{\pi}{5}$, then $\arg\omega\sim-\frac{\pi}{3}$. In a similar way of deriving \cref{eq-Phi-pi/5-airy}, we get
\begin{equation}\label{eq-Phi--pi/5-airy}
\begin{aligned}
\left(1+r_{3}(\eta,\xi)\right)\left(\frac{\omega}{\tilde{F}(\eta,\xi)}\right)^{\frac{1}{4}}\Ai(\xi^{\frac{2}{3}}\omega)
\sim& g_{1}\frac{-i}{\sqrt{2}}\lambda^{-\frac{3}{4}}e^{-\frac{4}{5}\lambda^{\frac{5}{2}}-p\lambda^{\frac{1}{2}}},
\\
\left(1+r_{4}(\eta,\xi)\right)\left(\frac{\omega}{\tilde{F}(\eta,\xi)}\right)^{\frac{1}{4}}\Bi(\xi^{\frac{2}{3}}\omega)
\sim&-ig_{1}\frac{-i}{\sqrt{2}}\lambda^{-\frac{3}{4}}e^{-\frac{4}{5}\lambda^{\frac{5}{2}}-p\lambda^{\frac{1}{2}}}
\\
&+2g_{2}\frac{-i}{\sqrt{2}}\lambda^{-\frac{3}{4}}e^{\frac{4}{5}\lambda^{\frac{5}{2}}+p\lambda^{\frac{1}{2}}}
\end{aligned}
\end{equation}
as $\xi, \lambda\rightarrow\infty$ with $\arg\lambda\sim-\frac{\pi}{5}$.
Here, $g_{i},i=1,2,3$, are not identically but asymptotically equal to the ones in \cref{eq-g1-g2}.
We use the same notations since we only concern with the asymptotics of the Stokes multiplier $s_{0}$.
A combination of \cref{eq-uniform-asymptotic-phik}, \cref{eq-Phi-pi/5-airy}, \cref{eq-Phi--pi/5-airy} and $\hat{\Phi}_{1}=\hat{\Phi}_{0}S_{0}$ yields
\begin{equation}
S_{0}=\begin{bmatrix}1&0\\s_{0}&1\end{bmatrix}
=\begin{bmatrix}1&0\\-i\frac{g_{1}}{g_{2}}&1\end{bmatrix}.
\end{equation}
Hence, we have $s_{0}=-i\frac{g_{1}}{g_{2}}=-i\cdot e^{-\xi G(\xi)+\mathcal{O}(\xi^{-\frac{1}{2}})}$ as $\xi\to+\infty$.
This completes the proof of \cref{Thm-H-positive}.

\section{Numerical analysis}
\label{sec:numerical}

In this section, we present our numerical findings and compare some of them with asymptotic results stated in the previous sections.
This can be considered as the continuation of several previous numerical studies on the first { Painlev\'e} equation \cite{Bender-Komijani-2015,Fornberg-Weideman}.
Here we focus on the solutions on the real $t$-axis.
Instead of starting from the origin using initial conditions $y(0)$ and $y'(0)$ as in the previous studies,
our computation starts from a point near a movable pole characterized by a pair of real parameters $(p,H)$.
In practice, we pick a point close to a pole, say $t_0=p + \epsilon$ or $t_0=p - \epsilon$,
where $\epsilon$ is small enough so that the point is away from other poles,
and large enough to have a reasonable numerical value of the solution.
In this part of the computation, we choose $\epsilon=0.1$.
We use the Laurent series in \cref{eq-Laurent-series} to evaluate the initial value $y(t_0)$ and the initial slope $y'(t_0)$.
To improve the accuracy, we actually used 40 terms in the Laurent expansion, which is too long to be included in this paper.
We then numerically integrate the first Painlev\'e equation away from the starting pole.
If we come close to another singularity, we circumvent it by going off to the complex $t$-plane.
Since all the singularities are double poles, the solution becomes real again after a half turn ($180^{\circ}$).
These series of exercises are repeated until a solution in a desired region is fully obtained.

Combining with the above computing algorithm and the binary searching,
we find that there exists a sequence of curves on the $(p,H)$ plane that give rise to the separatrix solutions [Type (B) solutions];
see the solid black lines in \cref{oscillate-region-fig}.
When $(p,H)$ is located in the finger-like (blue) regions enclosed by these curves,
the corresponding solutions oscillate when $t\to-\infty$ [Type (A) solutions].
Otherwise, the blank region leads to singular solutions [Type (C) solutions] that have infinity double poles on the negative real axis.
These facts verify \cref{cor-alternate-p-H-large}; see the comparison between these curves and the asymptotic of $\Sigma_{n}, n=1,2,\cdots$ in \cref{Fig-curves-Sigma-n}. One may find that it essentially gives a complete classification of the \PI~solutions in terms of $p$ and $H$.
These regions in the $(p,H)$ plane are analogous to a phase diagram in physics.

\begin{figure}
  \centering
	\includegraphics[width=0.9\textwidth]{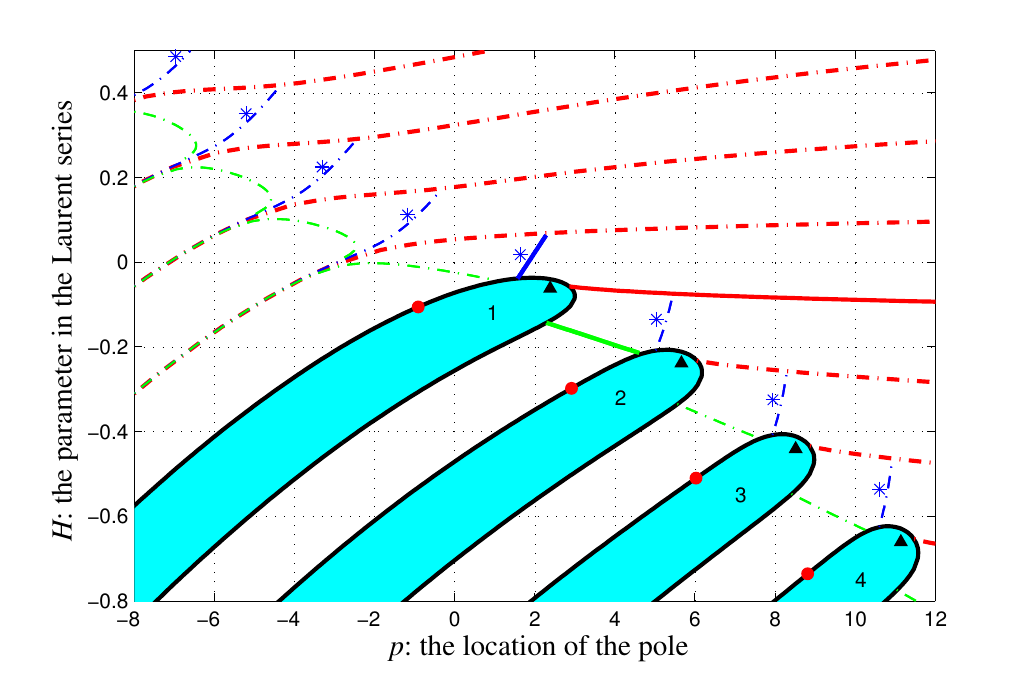}
	\caption{The phase diagram of \PI~solutions in the $(p,H)$ plane.
             The points $(p,H)$ in the blue finger-shaped regions lead to oscillating solutions,
             the points on the boundaries of fingers (solid black lines) lead to separatrix solutions,
             and the points in the blank regions lead to singular solutions.
             The red solid line is an arbitrarily chosen partition line with the minimum value of the solution (on the left of the initial pole) being zero. The red dash-dotted lines are locations of the other poles of the solutions started from the red solid line.
             Points in different strip regions are connected by \PI~solutions. For instance, all the small triangles belong to the tritronqu\'{e}e solution; all the red dots belong to a separatrix solution;
             and all the blue asterisks belong to a singular solution. Moreover, the blue (green) solid line is mapped to the blue (green) dashed-dotted curves by the corresponding singular \PI~solutions. The points $(p,H)$ for the most left pole of oscillating \PI~solutions are located in the finger-shaped region marked $1$, and the corresponding region marked $n$ is filled with points $(p,H)$ for the $n$-th pole (in the ascending order) of the oscillating \PI~solutions.}
   \label{oscillate-region-fig}
\end{figure}

Every \PI~solution has infinite double poles in the positive $t$-axis.
Therefore, a solution corresponds to infinite points in the parameter space $(p,H)$.
For example, all the small triangles in \cref{oscillate-region-fig} correspond to the tritronqu\'{e}e solution,
all the red points correspond to the same separatrix solution,
and all the blue asterisks correspond to the same singular solutions.
In fact, every Type (A) solution corresponds to one and only one point in each finger-shaped region.
The same goes to every Type (B) solution.
In this sense, each finger is ``mapped'' to each other finger by these solutions.
It is natural to divide the whole parameter plane into infinite parts and one \PI~solution corresponds to one and only one point in each part.
Such divisions can be extended to the blank region and are not unique to some extent.
One convenient partition is presented in \cref{oscillate-region-fig}.
Give any curve emanating from one finger edge and terminating to infinity
and ensure that it has only one intersection point with the finger edges.
Let the initial point $(p,H)$ traverse the whole curve.
Then the parameters of other poles of the \PI~solutions will derive an infinite sequence of curves that separate the blank region into infinite ones; see \cref{oscillate-region-fig}.
Note that the fingers map to $-\infty$ in all the divisions in the upper blank region ($H>M$).
It is intriguing to note that dividing the parameter space into equivalent regions somewhat resembles the Brillouin zones in solid state physics.
Finally, it should be noted that, although the division is not unique, the asymptotic states of the division curves when $p, H\to-\infty$ seems to be stationary; see the coalescence of the red, blue and green dash-dotted curves in \cref{oscillate-region-fig}.
\begin{table}
  \centering
  \caption{Comparison of the asymptotic values and numerical values of $p$ and $H$ for the tritronqu\'{e}e solution}\label{table-comparison-pole-value}
  \begin{tabular}{c|rc|rc|cc}
  \toprule
\#	&   asymptotic 	&values  		& numerical 	& values 		& relative 		& errors  \\
\midrule
$n$	& $p_{n}\quad~$ & $H_{n}$		& $p_{n}\quad~$	& $H_{n}$		& $\Delta p_{n}/p_{n}$  		& $\Delta H_{n}/H_{n}$	  \\

 1	& $ 2.347\,592$ & $-0.063\,998$ & $ 2.384\,169$ & $-0.062\,139$ & $-0.015\,342$	& $0.029\,948$\\
 2	& $ 5.653\,529$ & $-0.239\,172$ & $ 5.664\,603$ & $-0.238\,306$ & $-0.001\,955$	& $0.003\,633$\\
 3	& $ 8.507\,435$ & $-0.441\,498$ & $ 8.513\,524$ & $-0.440\,920$ & $-0.000\,715$	& $0.001\,311$\\
 4	& $11.135\,278$ & $-0.661\,123$ & $11.139\,362$ & $-0.660\,688$ & $-0.000\,367$	& $0.000\,659$\\
 5	& $13.614\,968$ & $-0.893\,832$	& $13.617\,995$ & $-0.893\,476$ & $-0.000\,222$	& $0.000\,398$\\
 6	& $15.985\,888$ & $-1.137\,197$ & $15.988\,269$ & $-1.136\,886$ & $-0.000\,149$	& $0.000\,274$\\
 7	& $18.271\,630$ & $-1.389\,622$ & $18.273\,580$ & $-1.389\,352$ & $-0.000\,107$	& $0.000\,194$\\
 8	& $20.487\,814$ & $-1.649\,963$ & $20.489\,457$ & $-1.649\,712$ & $-0.000\,080$	& $0.000\,152$\\
 9	& $22.645\,492$ & $-1.917\,359$ & $22.646\,906$ & $-1.917\,144$ & $-0.000\,062$	& $0.000\,112$\\
10	& $24.752\,867$ & $-2.191\,134$ & $24.754\,104$ & $-2.190\,936$ & $-0.000\,050$	& $0.000\,090$\\
\bottomrule
  \end{tabular}
\end{table}

Another work of our numerical simulation is paid on the tritronqu\'{e}e solution of \PI~equation.
Using the initial values $y(0)\approx -0.187\,554\,308\,340\,494\,9$ and $y'(0)\approx 0.304\,905\,560\,261\,228\,9$ in \cite{Fornberg-Weideman},
we compute the values of $p_{n}$ (location of the $n$-th pole) and $H_{n}$ (the corresponding Laurent coefficient) for \PI's tritronqu\'{e}e solution.
It verifies the asymptotic formulas of $p_{n}$ and $H_{n}$ in \cref{cor-tritronquee}.
\cref{table-comparison-pole-value} gives a comparison of the numerical values and asymptotic values of $p_{n}$ and $H_{n}$ for this special solution.
Numerically, the relative errors can be described by
\begin{eqnarray*}
	\frac{\Delta p_n}{p_n} \sim -\frac{0.0045148}{\left(n-\frac{1}{2}\right)^2}, \qquad \frac{\Delta H_n}{H_n} \sim \frac{0.0081}{\left(n-\frac{1}{2}\right)^2}, \qquad n\to\infty.
\end{eqnarray*}

\section{Discussions}
\label{sec:discussion}

We have considered the connection problem of the first Painlev\'{e} transcendent between its poles and negative infinity theoretically and numerically.
In the theoretical aspect, we have derived the leading asymptotic behavior of Stokes multipliers for the reduced triconfluent Heun equation,
and then classified the real \PI~solutions asymptotically in terms of $p$ and $H$.
Some limiting-form connection formulas are also established.
As a by-product, we have obtained the large-$n$ asymptotics of $p_{n}$ and $H_{n}$ which characterize the $n$-th pole of the tritronqu\'{e}e solution of \PI.
By numerical simulations, we have drawn the phase diagram (\cref{oscillate-region-fig}) of the real \PI~solutions on the $(p,H)$ plane,
which verifies our main theoretical results (\cref{Fig-curves-Sigma-n}) and gives a complete classification of the real \PI~solutions with respect to $p$ and $H$.
It may be regarded as a partial numerical answer to Clarkson's open problem on \PI's connection formulas between its poles and negative infinity.
The following issues still need further investigation.
\begin{enumerate}
\item [(1)] In \cref{oscillate-region-fig}, we know the boundaries of the finger-shaped regions are the sets of points $(p,H)$ that give rise to the Type (B) solutions (separatrix solutions). We have only obtain the asymptotic equation of these curves theoretically, see \cref{eq-Sigma-n}. A natural problem is to derive the exact equation of them. If succeed, one may give a complete answer to Clarkson's open problem on the connection formulas of \PI~between poles and negative infinity.
\item [(2)] The connection problem of \PI~in the complex plane may be more challenging, while the corresponding results have more applications in mathematics and mathematical physics, especially for the tronqu\'{e}e and tritronqu\'{e}e solutions of \PI.
\item [(3)] Similar analysis can be done for the other Painlev\'{e} equations. The method of uniform asymptotic has been applied in the connection problems of the Painlev\'{e} equations between different singularities \cite{APC, Wong-Zhang-2009-PIII, Wong-Zhang-2009-PIV, Zeng-Zhao-2015}. We believe that this method works equally well to connecting the local behaviors between poles and negative (or positive) infinity of other Painlev\'e equations.
\item [(4)] The steepest descent approach for Riemann-Hilbert problems is an alternative and powerful tool to solve connection problems of Painlev\'{e} equations~\cite{BI-2012,Dai-Hu-2017}. It is also successful in deriving the pole distribution of the \PII~functions~\cite{Miller}.
    We suspect that this method is applicable to solve similar connection problems as in this paper,
    and it is one of the topics under further discussion.
\end{enumerate}

\

\appendix

\section{Proof of \texorpdfstring{\cref{lem-def-C0}}{Lemma 2.1}}
Since $\eta_{1}$ and $\eta_{2}$ are either both real or form a complex conjugate pair,
it is evident that $\Im\kappa^2=0$.
Moreover, we find that if $C(\xi)=-3/2^{2/3}$,
then $\eta_{1}=\eta_{2}$,
which implies that $\kappa^2=0$.
On the other hand, by a careful analysis, one may find that $K(C(\xi)):=\kappa^2$ is a decreasing function of $C(\xi)$.
In fact, we have
\[
K'(C(\xi))=\frac{1}{\pi i}\int_{\eta_{1}}^{\eta_{2}}\frac{s}{\sqrt{s^3+C(\xi)s+1}}ds.
\]
For any $C(\xi)\in\mathbb{R}$, by choosing the branch as described in \cref{branches},
we know that
\[
\arg\left(\frac{s}{\sqrt{s^3+C(\xi)s+1}}\right)\in\left(-\frac{\pi}{4},\frac{\pi}{4}\right)
\]
and $\arg{(ds)}\approx -\frac{\pi}{2}$.
Hence $K'(C(\xi))<0$ for all $C(\xi)\in\mathbb{R}$.
This proves the first part of \cref{eq-kappa-hat-kappa-sign}.

According to \cref{eq-def-kappa-hat-kappa}, we know that $\hat{K}(C(\xi)):=-\Im \hat{\kappa}^2$ is also a function of $C(\xi)$.
When $C(\xi)\leq 0$,
it can be derived from \cref{branches} that $\arg\left(\sqrt{s^3+C(\xi)s+1}\right)\in\left(\frac{3\pi}{4},\frac{13\pi}{12}\right)$
and $\arg{(ds)}\in \left(0,\frac{\pi}{6}\right)$.
It immediately follows that $\hat{K}(C(\xi)):=-\Im \hat{\kappa}^2<0$.

Next, we show that $\hat{K}(C(\xi))$ is increasing for $C(\xi)\in (0,+\infty)$ and $\hat{K}(+\infty)>0$.
When $C(\xi)\to+\infty$, we have $\arg\left(\sqrt{s^3+C(\xi)s+1}\right)\sim \frac{5\pi}{4}$ and $\arg{(ds)}\sim \frac{\pi}{2}$,
which implies $\arg{\hat{K}(C(\xi))}\sim\frac{9\pi}{4}$, and hence $\hat{K}(+\infty)>0$.
It remains to show
\begin{equation}\label{eq-hat-K-positive}
\hat{K}'(C(\xi))=-\Im\hat{\kappa}^2=\Re\left(\frac{1}{\pi}\int_{\eta_{0}}^{\eta_{1}}\frac{s}{\sqrt{s^3+C(\xi)s+1}}ds\right)>0.
\end{equation}
Set $\eta_{3}$ to be the intersection point of the above integral path with the imaginary axis. Then
\begin{equation}\label{eq-hat-K-two-parts}
\hat{K}'(C(\xi))=\Re\left(\frac{1}{\pi}\int_{\eta_{0}}^{\eta_{3}}\frac{s}{\sqrt{s^3+C(\xi)s+1}}ds\right) +\Re\left(\frac{1}{\pi}\int_{\eta_{3}}^{\eta_{1}}\frac{s}{\sqrt{s^3+C(\xi)s+1}}ds\right).
\end{equation}
From \cref{branches}, we have
\[
\arg\left(\frac{ds}{\sqrt{s^3+C(\xi)s+1}}\right)>-\frac{11\pi}{12}
\quad\text{and}\quad
\arg{s}>\frac{\pi}{2}.
\]
Therefore, the value of the first integral in \cref{eq-hat-K-two-parts} has a positive real part.
Similarly, the value of the second integral in \cref{eq-hat-K-two-parts} has a positive real part too,
since
\[
\arg\left(\frac{ds}{\sqrt{s^3+C(\xi)s+1}}\right)>-\frac{5\pi}{6}
\quad\text{and}\quad
\arg{s}>\frac{\pi}{3}
\]
for all $s$ on the corresponding integral path.
Hence, we have \cref{eq-hat-K-positive} hold for all $C(\xi)>0$,
which completes the proof of \cref{lem-def-C0}.

\section{Proof of \texorpdfstring{\cref{lemma-zeta-eta-infty-relation}}{Lemma 3.3}}
A simple calculation for the left-hand side of \cref{zeta-define} yields
\begin{equation}\label{zeta-define-left}
\begin{aligned}
&\int_{-\kappa}^{\zeta}(s^2-\kappa^2)^{\frac{1}{2}}ds=\int_{-\kappa}^{\kappa}(s^2-\kappa^2)^{\frac{1}{2}}ds+\int_{\kappa}^{\zeta}(s^2-\kappa^2)^{\frac{1}{2}}ds\\
=&\int_{-\kappa}^{\kappa}(s^2-\kappa^2)^{\frac{1}{2}}ds+\frac{1}{2}\left[\zeta(\zeta^2-\kappa^2)^{\frac{1}{2}}-\kappa^2\log(\zeta+(\zeta^2-\kappa^2)^{\frac{1}{2}})+\frac{\kappa^2}{2}\log{\kappa^2}\right]\\
=&\int_{-\kappa}^{\kappa}(s^2-\kappa^2)^{\frac{1}{2}}ds+\frac{1}{2}\zeta^2-\frac{\kappa^2}{2}\log\zeta-\frac{\kappa^2}{4}-\frac{\kappa^2}{2}\log{2}+\frac{\kappa^2}{4}\log{\kappa^2}+\mathcal{O}(\zeta^{-1})\end{aligned}
\end{equation}
as $\zeta\to\infty$. For the right-hand side of \cref{zeta-define}, we get
\begin{equation}\label{zeta-define-right}
\begin{aligned}
&\int_{\eta_{1}}^{\eta}F(s,\xi)^{\frac{1}{2}}ds=\int_{\eta_{1}}^{\eta_{2}}F(s,\xi)^{\frac{1}{2}}ds+\int_{\eta_{2}}^{\eta}F(s,\xi)^{\frac{1}{2}}ds\\
=&\int_{\eta_{1}}^{\eta_{2}}F(s,\xi)^{\frac{1}{2}}ds+\int_{\eta_{2}}^{\eta}\left[F(s,\xi)^{\frac{1}{2}}-2(s^{\frac{3}{2}}+\frac{C(\xi)}{2}s^{-\frac{1}{4}})\right]ds\\
&+\frac{4}{5}\eta^{\frac{5}{2}}+2C(\xi)\eta^{\frac{1}{2}}-\left(\frac{4}{5}(\eta_{2})^{\frac{5}{2}}+2C(\xi)(\eta_{2})^{\frac{1}{2}}\right).
\end{aligned}
\end{equation}
Note that $F(s,\xi)^{\frac{1}{2}}-2(s^{\frac{3}{2}}+\frac{C(\xi)}{2}s^{-\frac{1}{4}})=\mathcal{O}(s^{-\frac{3}{2}})$ as $s\to\infty$,
we can take the limit $\eta\to\infty$ in \cref{zeta-define-right}.
Comparing \cref{zeta-define-left} with \cref{zeta-define-right} and noting
that $\frac{1}{2}\zeta^2\sim\frac{4}{5}\eta^{\frac{5}{2}}$ as $\eta\to\infty$, we obtain \cref{eq-relation-zeta-eta} immediately.

\section*{Acknowledgements}
The authors would like to thank the two anonymous reviewers for helpful suggestions
and valuable comments that improve the manuscript signifcantly.
We also thank Professor Yu-Qiu Zhao for useful discussions.


\begin{thebibliography}{}
 \bibliographystyle{siam}

\bibitem{APC}
A. P. Bassom, P. A. Clarkson, C. K. Law and J. B. McLeod,
Application of uniform asymptotics to the second Painlev\'{e} transcendent,
\textit{Arch. Rational Mech. Anal.}, {143} (1998), pp. 241--271.

\bibitem{Bender-Komijani-2015}
C. M. Bender and J. Komijani,
Painlev\'{e} transcendents and PT-symmetric Hamiltonians,
\textit{J. Phys. A: Math. Theor.}, {48} (2015), 475202, 15 pp.

\bibitem{Bender-Komijani-Wang-2019}
C. M. Bender, J. Komijani and Q. Wang,
Nonlinear eigenvalue problems for generalized Painlev\'{e} equations,
\textit{J. Phys. A: Math. and Theor.}, {52} (2019), 315202.


\bibitem{Bender-Orszag}
C. M. Bender and S. A. Orszag,
\textit{Advanced Mathematical Methods for Scientists and Engineers I},
Springer Science \& Business Media, New York, 1999.

\bibitem{Bertola-Tovbis-2013}
M. Bertola and A. Tovbis,
Universality for the focusing nonlinear Schr\"{o}dinger equation at the gradient catastrophe point:
rational breathers and poles of the tritronqu\'{e}e solution to Painlev\'{e} I,
\textit{Comm. Pure Appl. Math.}, {66} (2013), pp. 678--752.

\bibitem{BI-2012}
T. Bothner and A. Its, The nonlinear steepest descent approach to the singular
asymptotics of the second Painlev\'{e} transcendent,
\textit{Phys. D}, {241} (2012),
pp. 2204-2225.

\bibitem{Clarkson2003}
P. A. Clarkson,
Painlev\'{e} equations -- nonlinear special functions,
\textit{J. Comput. Appl. Math.}, {153} (2003), pp. 127--140.

\bibitem{Clarkson2006}
P. A. Clarkson,
Painlev\'{e} equations -- nonlinear special functions, in:
\textit{Orthogonal Polynomials and Special Functions},
Springer, Berlin and Heidelberg, (2006),  pp. 331--411.

\bibitem{Clarkson2019}
P. A. Clarkson,
Open problems for Painlev\'{e} equations,
\textit{SIGMA}, {15} (2019), 006, 20pages.

\bibitem{Costin-Costin-Huang-2015}
O. Costin, R. D. Costin and M. Huang, Tronqu\'{e}e Solutions of the Painlev\'{e} Equation PI,
\textit{Constr. Approx.}, {41} (2015), pp. 467--494.

\bibitem{Costin-Costin-Huang-2016}
O. Costin, R. D. Costin and M. Huang,
A direct method to find Stokes multipliers in closed form for \PI~and more general integrable systems,
\textit{Trans. Amer. Math. Soc.}, {368} (2016), pp. 7579--7621,

\bibitem{Costin-Huang-Tanveer-2016}
O. Costin, M. Huang and S. Tanveer,
Proof of the Dubrovin conjecture and analysis of the tritronqu\'{e}e solutions of \PI,
\textit{Duke Math. J.}, {163} (2014), pp. 665--704,

\bibitem{Dai-Hu-2017}
D. Dai and W. Hu,
Connection formulas for the Ablowitz-Segur solutions of the inhomogeneous Painlev\'{e} II equation,
\textit{Nonlinearity} {30} (2017), pp. 2982–3009.

\bibitem{Dubrovin2009}
B. Dubrovin, T. Grava and C. Klein,
On universality of critical behavior in the focusing nonlinear Schr\"{o}dinger equation, elliptic umbilic catastrophe and the
tritronqu\'{e}e solution to the Painlev\'{e}-I equation,
\textit{J. Nonlinear Sci.} {19} (2009), pp. 57--94.

\bibitem{Dunster}
T. M. Dunster, Asymptotic solutions of second-order linear differential equations having almost coalescent turning points, with an application to the incomplete gamma function, \textit{Proc. Roy. Soc. London Ser. A}, {452}(1996), pp. 1331–1349.


\bibitem{Fornberg-Weideman}
B. Fornberg and J. A. C. Weideman, A numerical methodology for the Painlev\'{e} equations,
\textit{J. Comput. Phys.}, {230} (2011), pp. 5957--5973.

{ \bibitem{GLS}
V. I. Gromak, I. Laine and S. Shimomura, \textit{Painlev\'{e} Differential Equations in the Complex Plane}, de Gruyter, Berlin, New York, 2002.
}

\bibitem{HS-1984}
P. Holmes and D. Spence, On a Painlev\'{e}-type boundary-value problem,
\textit{J. Mech. Appl. Math.}, {37} (1984), pp. 525--538.

\bibitem{Ince}
E. L. Ince,
\textit{Ordinary Differential Equations},
Dover Publications, New York, 1956.

\bibitem{IKSY}
K. Iwasaki, H. Kimura, S. Shimomura and M. Yoshida,
\textit{From Gauss to Painlev\'{e}. A Modern Theory of Special Functions},
Aspects of Mathematics, Vol. E16, Friedr. Vieweg \& Sohn, Braunschweig, 1991.

\bibitem{Joshi-Kitaev2001}
N. Joshi and A. V. Kitaev
On Boutroux's tritronqu\'{e}e solutions of the first Painlev\'{e} equation,
\textit{Stud. Appl. Math.}, {107} (2001), pp. 253--291.

\bibitem{AAKapaev-1988}
A. A. Kapaev,
Asymptotic behavior of the solutions of the Painlev\'{e} equation of the first kind,
\textit{Differ. Uravn.}, {24} (1988), pp. 1684--1695 (Russian).

\bibitem{AAKapaev-2004}
A. A. Kapaev,
Quasi-linear Stokes phenomenon for the Painlev\'{e} first equation,
\textit{J. Phys. A: Math. Gen.}, {37} (2004), pp. 11149--11167.

\bibitem{Kapaev-Kitaev-1993}
A. A. Kapaev and A. V. Kitaev, Connection formulae for the first Painlev\'{e} transcendent in the complex domain,
\textit{Lett. Math. Phys.}, {27} (1993), pp. 243--252.

\bibitem{LongLLZ}
W.-G. Long, Y.-T. Li, S.-Y. Liu and Y.-Q. Zhao,
Real solutions of the first Painlev\'{e} equation with large initial data,
\textit{Stud. Appl. Math.}, {139} (2017), pp. 505--532.

\bibitem{Masoero-2010}
D. Masoero,
Poles of integr\'{a}le tritronqu\'{e}e and anharmonic oscillators. A WKB approach,
\textit{J. Phys. A: Math. Theor.}, {43} (2010), 095201.

\bibitem{Miller}
P. D. Miller,
On the increasing tritronqu\'{e}e solutions of the Painlev\'{e}-II equation,
\textit{SIGMA}, {14} (2018), 125, 38 pages.

\bibitem{Olver-1959}
F. W. J. Olver,
Uniform asymptotic expansions for Weber parabolic cylinder functions of large orders,
\textit{J. Res. Nat. Bur. Standards Sect. B}, {63B} (1959), pp. 131--169.

\bibitem{NIST-handbook}
F. Olver, D. Lozier, R. Boisvert and C. Clark,
\textit{NIST Handbook of Mathematical Functions},
Cambridge University Press, Cambridge, 2010.

\bibitem{Qin-Lu-2008}
H.-Z. Qin and Y.-M. Lu,
A note on an open problem about the first Painlev\'{e} equation,
\textit{Acta Math. Appl. Sin. Engl. Ser.}, {24} (2008), pp. 203--210.

\bibitem{Sibuya-1967}
Y. Sibuya, Stokes multipliers of subdominant solutions of the differential equation $y''-(x^{3}+\lambda)y=0$,
\textit{Proc. Amer. Math. Soc.}, {18} (1967), pp. 238--243.

{ \bibitem{SL}
S. Yu. Slavyanov and W. Lay, \textit{Special Functions: A Unified Theory Based on Singularities}, Oxford University Press, Oxford, 2000.}

\bibitem{Wong-Zhang-2009-PIII}
R. Wong and H.-Y. Zhang, On the connection formulas of the third Painlev\'{e} transcendent,
\textit{Discrete Contin. Dyn. Syst.}, {23} (2009), pp. 541--560.

\bibitem{Wong-Zhang-2009-PIV}
R. Wong and H.-Y. Zhang, On the connection formulas of the fourth Painlev\'{e} transcendent,
\textit{Anal. Appl.}, {7} (2009), pp. 419--448.

\bibitem{Xia-Xu-Zhao}
J. Xia, S.-X. Xu and Y.-Q. Zhao, Isomonodromy sets of accessory parameters for Heun class equations, \textit{
Stud. Appl. Math.}, {146} (2021), pp. 901--952.


\bibitem{Zeng-Zhao-2015}
Z.-Y. Zeng and Y.-Q. Zhao,
Application of uniform asymptotics to the connection formulas of the fifth Painlev\'{e} equation,
\textit{Appl. Anal.}, {95} (2016), pp. 390--404.


\end{thebibliography}
\end{document}